\documentclass[a4paper,11pt,fleqn]{article}
\usepackage{amsmath,amssymb,amsthm}
\usepackage{mathrsfs} 
\usepackage{euscript}
\usepackage{color}
\usepackage[shortlabels]{enumitem}
\usepackage{charter}
\usepackage{array}
\usepackage{graphicx}
\usepackage[normalem]{ulem}

\definecolor{labelkey}{rgb}{0,0.08,0.45}
\definecolor{refkey}{rgb}{0,0.6,0.0}
\definecolor{Brown}{rgb}{0.45,0.0,0.05}
\definecolor{dgreen}{rgb}{0.00,0.49,0.00}
\definecolor{dblue}{rgb}{0,0.08,0.75}
\RequirePackage[colorlinks,hyperindex]{hyperref} 
\hypersetup{linktocpage=true,citecolor=dblue,linkcolor=dgreen}

\topmargin     0.0cm
\oddsidemargin -0.2cm 
\textwidth     16.8cm
\headheight    0.0cm
\textheight    22.4cm
\parindent     6mm
\parskip       1pt
\tolerance     1000

\newtheorem{theorem}{Theorem}[section]

\newtheorem{lemma}[theorem]{Lemma}
\newtheorem{fact}[theorem]{Fact}
\newtheorem{proposition}[theorem]{Proposition}
\newtheorem{problem}[theorem]{Problem}

\theoremstyle{definition}

\newtheorem{algorithm}[theorem]{Algorithm}
\newtheorem{remark}[theorem]{Remark}


\numberwithin{equation}{section}


\DeclareMathOperator*{\esssup}{\ensuremath{\text{\rm ess\,sup}}}
\DeclareMathOperator*{\essinf}{\ensuremath{\text{\rm ess\,inf}}}
\DeclareMathOperator*{\dom}{\ensuremath{\text{\rm dom}}}
\DeclareMathOperator*{\argmin}{\ensuremath{\text{\rm arg\,min}}}
\DeclareMathOperator*{\argmax}{\ensuremath{\text{\rm arg\,max}}}
\DeclareMathOperator*{\range}{\ensuremath{\text{\rm Im}}}
\DeclareMathOperator*{\inte}{\ensuremath{\text{\rm int}}}
\DeclareMathOperator*{\cl}{\ensuremath{\text{\rm cl}}}

\DeclareMathOperator{\Ker}{\ensuremath{\text{\rm Ker}}}

\DeclareMathOperator{\Diag}{\ensuremath{\text{\rm diag}}}

\newcommand{\EE}{\ensuremath{\mathbb E}}
\newcommand{\PP}{\ensuremath{\mathbb P}}
\newcommand{\VV}{\ensuremath{\mathbb V}}

\newcommand{\hatC}{C}
\newcommand{\Lc}{C}

\newcommand{\R}{\mathbb R}
\newcommand{\extR}{\left]-\infty,+\infty\right]}
\newcommand{\C}{\mathcal C}
\newcommand{\Hh}{\ensuremath X}
\newcommand{\Gg}{\ensuremath Y}
\newcommand{\N}{\mathbb N}

\newcommand{\Leg}{\phi}
\newcommand{\Bre}{D_\Leg}
\newcommand{\Bredual}{D_{\Leg^*}}
\newcommand{\Bdist}{D}
\newcommand{\Pc}{P}
\newcommand{\intdom}{\inte(\dom \phi)}

\providecommand{\norm}[1]{\lVert#1\rVert}
\providecommand{\scalarp}[1]{\langle#1\rangle}
\providecommand{\abs}[1]{\lvert#1\rvert}

\title{ { \sffamily The method of Bregman projections in 
deterministic and  stochastic convex feasibility problems} } 

\author{Vladimir Kostic\thanks{Istituto Italiano di Tecnologia, Via Melen, 83,  
		16152 Genova, Italy 
		({\tt vladimir.kostic@iit.it}) and Department of Mathematics and Informatics, Faculty of Science, University of Novi Sad, Trg Dositeja Obradovića 4, 21000 Novi Sad, Serbia.} \and Saverio Salzo\thanks{Istituto Italiano di Tecnologia, Via Melen, 83,  
        16152 Genova, Italy ({\tt saverio.salzo@iit.it}).}}
\date{}

\begin{document}
\maketitle

\begin{abstract}
In this work we study the method of Bregman projections for deterministic and stochastic convex feasibility problems
with three types of control sequences for the selection of sets during the algorithmic procedure:  greedy, random, and
adaptive random. We analyze in depth the case of affine feasibility problems showing that the iterates generated by 
the proposed methods converge Q-linearly and providing also explicit global and local rates of convergence. This work 
generalizes from one hand recent developments in randomized methods for the solution of linear systems based on
orthogonal projection methods. On the other hand, our results yield global and local Q-linear rates of convergence for 
the Sinkhorn and Greenhorn algorithms in discrete entropic-regularized optimal transport, for the first time, even in the 
multimarginal setting.
\end{abstract}

 \vspace{1ex}
\noindent
{\bf\small Keywords.} 
{\small Convex feasibility problem,
stochastic convex feasibility problem, 
Bregman projection method, KL projection, 
multimarginal  regularized optimal trasport, 
linear convergence.}\\[1ex]
\noindent
{\bf\small AMS Mathematics Subject Classification:} {\small 90C25, 65K05, 49M37, 90C15, 90C06
}

\section{Introduction}
The convex feasibility problem consists in finding a point in the intersection of a finite family of closed convex sets. 
Such problem arises in several areas of mathematics and applied sciences, such as best approximation theory 
\cite{DEU1991} and image reconstruction \cite{COM1993}. A classical approach to solve that problem is the method
of \emph{cyclic orthogonal projections} \cite{HAL1962}, which generates a sequence of points by projecting onto each
constraint set cyclically. In 1967 Bregman, in \cite{BRE1967}, generalized this method by allowing non-orthogonal 
projections which are constructed as follows. Given a well-behaved convex function $\Leg$, construct the so called 
\emph{Bregman distance}\footnote{Note that is not a distance in the sense of metric topology and even when 
$\phi(x) = (1/2) \norm{x}^2$ it is one half of the square of the distance between $x$ and $y$.} (or \emph{divergence})
between two points $x$ and $y$ as
\begin{equation}
\Bre(x,y) = \Leg(x) - \Leg(y) - \scalarp{x-y, \nabla \Leg(x)}
\end{equation}
and then define the Bregman projection of a point $x$ onto a given closed convex set as the point in the set which 
minimizes the Bregman distance from $x$. He studied two versions of the algorithm depending on the strategy 
(\emph{control}) for calling the various projections during the algorithmic procedure: \emph{cyclic} and \emph{the 
most remote} set control. Since then, a number of studies followed, extending the theory to more general set controls:
\emph{almost cyclic} and \emph{repetitive} \cite{CEN1981,CEN1996,COM1993}. The Bregman projection method, 
with respect to the Kullback-Leibler divergence, has found numerous applications in optimal transport \cite{BEN2015} 
and probability and statistics, where it is known as the \emph{iterative proportional fitting procedure} (IPFP)
\cite{RUS1995}.

In this work, we focus on the \emph{convex feasibility problem} in an Euclidean space $X$, addressing both the
\emph{deterministic} and the \emph{stochastic} variants (possibly with an infinite uncountable number of sets)
as described in the following problems.
\begin{problem}[Deterministic]
\label{P1}
Let $\mathcal{C} \!=\! (C_i)_{i \in I}$ be a family of nonempty closed convex sets in $X$.
Let $C = \bigcap_{i \in I} C_i$ and suppose that $C \neq \varnothing.$ Find $x \in C$.
\end{problem}

\begin{problem}[Stochastic]
\label{P2}
Let $(I, \mathcal{I})$ be a measurable space, $\mathcal{C} = (C_i)_{i \in I}$ a family of nonempty closed convex 
sets in $X$, such that $i \mapsto C_i$ is a measurable set-valued mapping, and let $\xi$ be an $I$-valued random
variable. Let $C = \{ x \in X \,\vert\, x \in C_\xi, \PP\text{-a.s.} \}$ and suppose that $C \neq \varnothing$. Find $x \in C$.
\end{problem}

In dealing with the two problems above we consider the method of Bregman projections. The algorithm is detailed 
below, where we denote by $P_{C_i}$ the Bregman projection onto the set $C_i$ w.r.t.~$\Leg$.
\begin{algorithm}[The Bregman projection method]
\label{algoABP}
Let $x_0 \in \inte(\dom\Leg)$. Iterate
\vspace{-1ex}
\begin{equation}
\label{eq:algo1}
\begin{array}{l}
\text{for}\;k=0,1,\ldots\\
\left\lfloor
\begin{array}{l}
\text{choose } \xi_k \in I\\
x_{k+1} = P_{C_{\xi_k}}(x_{k}).
\end{array}
\right.
\end{array}
\end{equation}
\end{algorithm}
\noindent
The sequence $(\xi_k)_{k \in\N}$ is called the \emph{set control sequence} and, depending on which of the two 
problems above we consider, it can be deterministic or stochastic.

\subsection{Contribution}
In the following we summarize the main contribution of this paper. We denote by $D_{C}(x) = \inf_{z \in C} \Bre(z,x)$ 
the Bregman distance from $x$ to the set $C$. The following holds.
\begin{itemize}
\item As for Problem~\ref{P1}, if the sets $C_i$'s are affine and the $\xi_k$'s are chosen according to the greedy 
strategy $\xi_k \in \argmax_{i \in I} D_{C_i}(x_{k})$ (the most remote set control), we prove that 
$\Bre(P_C(x_0), x_k) =D_C(x_k) \to 0$ with Q-linear rate and we provide both global and local rates of convergence.
See Theorem~\ref{thm:main2}.
\item As for Problem~\ref{P2}, if the $\xi_k$'s are random variables which are independent copies of $\xi$, then we 
prove that the iterates converge almost surely and in mean square to a random variable taking values in the set $C$.
See Theorem~\ref{thm:conv2}. Moreover, if the sets $C_i$'s are affine and the $\xi_k$'s are random variables which 
are either independent copies of $\xi$ or with distribution adaptively depending on $x_{k}$, we prove that 
$\EE[\Bre(P_C(x_0), x_k)] =\EE[D_C(x_k)] \to 0$ with Q-linear rate and we provide both global and local rates of
convergence. See Theorem~\ref{thm:main_random}, Remark~\ref{rmk:Qlin}, and Theorem~\ref{thm:main_adaptive}.
\end{itemize}

Below we comment on the results. 1) To the best of our knowledge the Q-linear convergence of the Bregman 
projection method in (deterministic and stochastic) affine feasibility problems is new. This fully generalizes the 
well-known linear rate of convergence of the orthogonal projection method \cite{KAC1937,NRP2019,SV2009,RT2020}
to Bregman projections. We stress that this extension is truly nontrivial. Indeed, while in the classical setting one 
works with orthogonal projections w.r.t.~an Euclidean norm, in the Bregman setting this can be done only locally 
by approximating  the Bregman distance through the semi-norm induced by the Hessian (which indeed we allow to 
be possibly rank deficient) of the Fenchel conjugate of $\Leg$. 2) Concerning the stochastic feasibility problem in 
general, the Bregman projection approach and its converge is also new. 3) The results cover a number of interesting
(Legendre) functions $\Leg$: among others, Burg entropy, Boltzmann-Shannon entropy, Fermi-Dirac entropy, and 
$p$-norms with $1<p \leq 2$. 4) Novel applications include sketch \& Bregman project methods for solving linear 
systems of equations and generalization of Sinkhorn and Greennkhorn algorithms for regularized multimarginal 
optimal transport, to name a few. In particular, this analysis establishes global and local Q-linear convergence of 
the Greenkhorn algorithm and of the (stochastic and greedy) iterative KL projection algorithm for multimarginal 
entropic-regularized optimal transport.

\subsection{Related works}
Although the method of Bregman projections for convex feasibility problems has a long history and has proved to 
be at the basis of several important algorithms in science \cite{DT2007,PC2019}, unlike its Euclidean version 
(the method of orthogonal projections), it remained in the domain of deterministic intersection of sets, i.e., 
Problem \ref{P1}. Indeed, even if quite general deterministic set control sequences, called repetitive, are allowed
\cite{CEN1981,CEN1996}, they are meaningful and can possibly handle the stochastic setting only when the number 
of sets is finite (see~Remark~\ref{rmk:repcontrol}). On the other hand Problem \ref{P2}, even with an uncountable
number of sets, was first considered in \cite{BUT1995} and tackled via an \emph{expected orthogonal projection method},
which was later extended to Bregman projections in \cite{BUT2000}. However, these types of methods are different 
from Algorithm~\ref{algoABP}: indeed they are defined as $x_{k+1} = \EE[P_{C_{\xi_k}} (x_k)]$, so that they generate 
a non-stochastic sequence. Instead, the method of stochastic \emph{orthogonal} projections for Problem \ref{P2} was
introduced in \cite{NED2010} and in recent years has received a renewed attention also thanks to the fast development
of machine learning and data science \cite{HER2019,NRP2019, MT2020}.

The linear convergence of the method is known only in the deterministic case and for some special affine feasibility
problems. A prominent example is that of the  Sinkhorn algorithm for entropic optimal transport, which can be viewed 
as alternating Bregman projections (w.r.t.~the Kullback-Leibler divergence) between two suitable affine sets: global 
and local convergence rates are discussed in \cite{KNI2008,PC2019}. We also mention the work \cite{IUS1991}, by
Iusem, which shows \emph{local} linear convergence for a row-action method for general linear inequality constraints 
with an almost cyclic control sequence. The method is not purely alternating projections (as Algorithm~\ref{algoABP}), 
but is in fact  a primal-dual algorithm (originally introduced in \cite{BRE1967}) specifically designed for the case that the $C_i$'s 
are halfspaces. Besides, for this method no global linear rate is known.
		
Since affine feasibility problems are essentially linear systems of equations, in the following we also discuss 
connections with the field of numerical linear algebra. In the last decades, mainly triggered by problems in machine
learning, randomized linear solvers have emerged as a way to approach large linear systems. One of the main
achievements in this direction is the work by Strohmer and Vershynin \cite{SV2009} who studied a randomized 
version of the Kaczmarz method \cite{KAC1937}. More recently, in \cite{GR2015,GMMN2020,NRP2019,RT2020}, 
this method has been extended to the more general framework of \emph{sketch \& project}, which is designed to 
compute the minimal norm solution of a feasible linear system by making a sequence of orthogonal projections 
onto sketched systems of smaller dimensions. In \cite{GMMN2020} different strategies of choosing sketched systems 
at each iteration were considered: greedy, random, and adapive. For these strategies linear convergence rates were
obtained, which in the special case of the randomized Kaczmarz methods reduce to those in \cite{SV2009}. Our work
extends \cite{GMMN2020, RT2020, SV2009}, since we allow Bregman projections onto the sketched systems, 
proving linear convergence for all the types of sketching strategies described above. We call our method 
\emph{sketch \& Bregman project}. We stress that when our results specialize to orthogonal projections, they 
completely recover the ones in \cite{GMMN2020, RT2020, SV2009},  revealing that our analysis is indeed tight 
(see Section~\ref{sec:SketchProject}).

\subsection{Outline of the paper}
In the next section we provide notation and basic facts about the Bregman projections. Section~\ref{sec:BPM} 
analyzes Algorithm~\ref{algoABP} for the general case of convex sets. In Section~\ref{sec:BPMaff} we present the 
main results of this work, which concerns the linear convergence of the deterministic and stochastic Bregman 
projection method for the affine feasibility problem. Finally, Section~\ref{sec:Applications} discusses in more details 
some relevant applications, meaning, sketch \& Bregman project methods for solving linear systems and regularized
optimal transport problems.

\section{Preliminaries}
In this section we provide notation and basic concepts and results related to the Bregman projection method.

\subsection{Notation and basic background}
In this work $\Hh$ is an Euclidean space with scalar product $\scalarp{\cdot,\cdot}$ and induced norm $\norm{\cdot}$.
The interior of a set $C \subset \Hh$ is denoted by $\inte(C)$ and its boundary by $\mathrm{bdry}(C)$. We set 
$\R_+ = \left[0,+\infty\right[$ and $\R_{++} = \left]0,+\infty\right[$.

Let $\Leg\colon\Hh\to\extR$ be an extended-real valued function. The set of minimizers of the function $\Leg$
is denoted by $\argmin_{x \in \Hh} \Leg(x)$, the \emph{domain} of $\phi$  is 
$\dom \Leg :=\{x\in\Hh \,\vert\, \Leg(x)<+\infty\}$ and $\phi$ is \emph{proper} when $\dom \Leg \neq \varnothing$. 
The function $\Leg$ is \emph{convex} if $\Leg(t x+(1-t)y)\leq t\Leg(x) + (1-t) \Leg(y)$ for all $x,y\in \dom\Leg$ and
$t\in[0,1]$. If the above inequality is strict when $0<t<1$ and $x \neq y$, the function is \emph{strictly convex}. 
The function $\Leg$ is \emph{closed} if the sublevel sets $\{x\in\Hh\,\vert\, \Leg(x)\leq t\}$ are closed in $\Hh$ for 
any $t\in\R$. For a convex function $\Leg\colon\Hh\to\extR$, we denote by $\Leg^{*}$ its \emph{Fenchel conjugate}, 
that is, $\Leg^*\colon \Hh\to\extR$, $\Leg^{*}(y):=\sup_{x\in\Hh}\{\scalarp{x,y}-\Leg(x)\}$. The conjugate of a convex
function is always closed and convex, and if $\Leg$ is proper closed and convex, then $(\Leg^{*})^{*}=\Leg$. 

A proper closed and convex function $\Leg$ is \emph{essentially smooth} if it is differentiable on 
$\mathrm{int}(\dom \Leg) \neq \varnothing$, and $\norm{\nabla \Leg(x_n)} \to +\infty$ whenever 
$x_n \in \inte(\dom \Leg)$ and $x_n \to x \in \mathrm{bdry}(\dom \Leg)$.  The function $\Leg$ is 
\emph{essentially strictly convex} if $\inte(\dom \Leg^*)\neq \varnothing$ and is strictly convex on every 
convex subset of $\dom \partial \Leg$. A \emph{Legendre} function is a proper closed and convex function 
which is also essentially smooth and essentially strictly convex. A function is Legendre if and only if its conjugate is so.
Moreover, if $\Leg$ is a Legendre function, then $\nabla \Leg\colon \inte(\dom \Leg) \to \inte(\dom \Leg^*)$ and 
$\nabla \Leg^*\colon \inte(\dom \Leg^*) \to \inte(\dom \Leg)$ are bijective, inverses of each other, and continuous. 
See \cite[Sec.~26]{ROC1970}. Given a Legendre function $\Leg$, the \emph{Bregman distance} associated to 
$\Leg$ is the function $\Bre\colon\Hh \times \Hh \to [0,+\infty]$ such that
\begin{equation}
\label{Bdiv}
\Bre(x,y) =
\begin{cases}
\Leg(x)-\Leg(y)-\scalarp{x-y,\nabla \Leg(y)}&\text{if } y \in \inte(\dom \Leg)\\
+\infty &\text{otherwise}.
\end{cases}
\end{equation}

In the following we will use some important properties of Bregman distances generated by Legendre functions. 
See \cite{BAU2001,BAU1997,BAU2003}. Note that item \ref{prop:bregman_div_vii} follows from Taylor's formula 
for $\Leg$. 

\newpage
\begin{fact}\label{prop:bregman_div}
Let $\Leg$ be a Legendre function. Then the following properties hold.
\begin{enumerate}[label={\rm (\roman*)}]
\item\label{prop:bregman_div_0} 
$(\forall\,x \in \dom \Leg)(\forall\, y \in \inte(\dom \Leg))$\ 
$\Bre(x,y) = \Leg(x) + \Leg^*(\nabla \Leg(y)) - \scalarp{x,\nabla \Leg(y)}$.
\item $(\forall\, y\in \inte(\dom \Leg) )$\ $\Bre(\cdot,y)$ is a strictly convex on $\inte(\dom \Leg)$ and coercive.
\item\label{prop:bregman_div_iia} $(\forall\, x,y \in \inte(\dom \Leg))$\ $\Bre(x,y)=0\ \Leftrightarrow\ x=y$.
\item\label{prop:bregman_div_ii} $(\forall\, x,y \in \inte(\dom \Leg))$
\ $\Bre(x,y)+\Bre(y,x) = \scalarp{x-y, \nabla\Leg(x)-\nabla\Leg(y)}\geq0$.
\item\label{prop:bregman_div_iv} 
(Three-Point Identity \cite{CHE1993}) For every $x \in \Hh$ and $y,z\in\inte(\dom \Leg)$, we have 
\begin{equation}\label{3p}
\Bre(x,z) = \Bre(x,y)+\Bre(y,z)+ \scalarp{x- y, \nabla\Leg(y)-\nabla\Leg(z)}.
\end{equation}
\item\label{prop:bregman_div_vi} $(\forall\, x,y \in \inte(\dom \Leg))$\ $\Bre(x,y)=\Bredual(\nabla\Leg(y),\nabla\Leg(x))$.
\item\label{prop:bregman_div_iii} $\Bre$ is continuous on $\inte(\dom \Leg) \times \inte(\dom \Leg)$.
\item Suppose that $\Leg$ is twice differentiable on $\inte(\dom\Leg)$. Then
\begin{equation}
\big(\forall x\!\in\inte(\dom\Leg),\, \nabla^2\Leg(x)\text{ is invertible}\big)\Leftrightarrow
\big(\Leg^*\!\text{ is twice differentiable}\big).
\end{equation}
\item\label{prop:bregman_div_ix} Suppose that $\dom \Leg^*$ is open. Then, for every $x \in \inte(\dom \Leg)$,  
the sublevel sets of $\Bre(x,\cdot)$ are compact, and hence $\Bre(x,\cdot)$ is lower semicontinuous.
\item\label{prop:bregman_div_x} Suppose that $\dom \Leg^*$ is open. Then, for every $x \in \inte(\dom \Leg)$,  
and every sequence $(y_k)_{k \in \N}$ in $\inte(\dom \Leg)$
\begin{equation}
\Bre(x,y_k) \to 0\ \Rightarrow\  y_k \to x.
\end{equation}
Consequently, for every $x\in \inte(\dom\Leg)$ and $\varepsilon>0$, there exists $\delta>0$ such that for every 
$y\in \inte(\dom\Leg)$, $D_{\Leg}(x,y)< \delta \implies  \norm{x-y}<\varepsilon$.
\item\label{prop:bregman_div_vii} 
If $\Leg$ is twice differentiable on $\inte(\dom \Leg)$, then for every 
$x,y\in\inte(\dom \Leg)$ there exists $\xi\in[x,y]$ such that 
\begin{equation}\label{hess}
\Bre(x,y)=\frac{1}{2}\scalarp{\nabla^2\Leg(\xi)(x-y),x-y}.
\end{equation}
Moreover, for every $y \in \inte(\dom \Leg)$ and every $\varepsilon>0$ there exists 
$\delta>0$ such that, for every $x\in \inte(\dom \Leg)$ such that $x-y\not\in\Ker(\nabla^2\Leg(y))$,
\begin{equation}
\norm{x - y} \leq \delta\ \Rightarrow\ 
\bigg\lvert \frac{\Bre(x,y) - \frac 1 2 \scalarp{\nabla^2 \Leg(y) (x-y),x-y}}{\frac 1 2 \scalarp{\nabla^2 \Leg(y) (x - y), x - y}} \bigg\rvert
\leq \varepsilon.
\end{equation}
\end{enumerate}
\end{fact}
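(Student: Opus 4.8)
The plan for the identity \eqref{hess} is to reduce to a one-dimensional Taylor expansion. First I would fix $x,y\in\inte(\dom\Leg)$ and observe that, since $\inte(\dom\Leg)$ is open and convex, the whole segment $[x,y]$ lies in it; hence $g\colon[0,1]\to\R$, $g(t)=\Leg\big(y+t(x-y)\big)$, is well defined and twice differentiable, with $g'(t)=\scalarp{\nabla\Leg(y+t(x-y)),\,x-y}$ and $g''(t)=\scalarp{\nabla^2\Leg(y+t(x-y))(x-y),\,x-y}$. Applying Taylor's formula with Lagrange remainder at order two produces $\theta\in(0,1)$ with $g(1)=g(0)+g'(0)+\tfrac12 g''(\theta)$; setting $\xi=y+\theta(x-y)\in[x,y]$ and subtracting $g(0)+g'(0)=\Leg(y)+\scalarp{\nabla\Leg(y),x-y}$, the left-hand side becomes $\Bre(x,y)$ by the definition \eqref{Bdiv}, which is exactly \eqref{hess}. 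The only regularity needed is $g\in C^1([0,1])$ and $g''$ existing on $(0,1)$, both immediate from twice differentiability of $\Leg$ on the open set $\inte(\dom\Leg)$.

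For the quantitative bound I would write $u=x-y$, $H=\nabla^2\Leg(y)$ (positive semidefinite by convexity of $\Leg$), and $Q=\tfrac12\scalarp{Hu,u}$, and treat numerator and denominator separately. The numerator is handled by the second-order expansion: twice differentiability of $\Leg$ at $y$ gives $\Leg(x)=\Leg(y)+\scalarp{\nabla\Leg(y),u}+Q+r(x)$ with $r(x)=o(\norm{u}^2)$ as $x\to y$, and by \eqref{Bdiv} this reads $\Bre(x,y)-Q=r(x)=o(\norm{u}^2)$. Hence, given $\varepsilon>0$, there is $\delta>0$ so that $\abs{\Bre(x,y)-Q}\le\tfrac{\varepsilon}{2}\,\lambda^+\norm{u}^2$ for all $\norm{u}\le\delta$, where $\lambda^+>0$ is the smallest nonzero eigenvalue of $H$.

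The hard part will be the lower bound on the denominator $Q$, which is where rank deficiency of $H$ bites. Decomposing $\Hh=\Ker H\oplus\range H$ orthogonally and writing $u=u_0+u_1$, one has $\scalarp{Hu,u}=\scalarp{Hu_1,u_1}\ge\lambda^+\norm{u_1}^2$, so the relative error satisfies $\abs{r(x)}/Q\le\varepsilon\,\norm{u}^2/\norm{u_1}^2$. The difficulty is that the remainder is governed by the full length $\norm{u}$ whereas $Q$ only sees the range component $\norm{u_1}$: the estimate closes uniformly precisely when $u$ stays bounded away from $\Ker H$, i.e. $u\in\range H$ (so that $\norm{u_1}=\norm{u}$), yielding $\abs{r(x)}/Q\le\varepsilon$ for $\norm{u}\le\delta$. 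I therefore expect the crux to be using the hypothesis $u\notin\Ker H$ together with the relevant structure to keep $u$ uniformly off the kernel, rather than the bare pointwise condition; directions asymptotically tangent to $\Ker H$ are exactly the case one must rule out.
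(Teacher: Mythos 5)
Your proposal addresses only item \ref{prop:bregman_div_vii}; that is consistent with the paper itself, which states items (i)--(x) as facts from the cited literature and for the last item offers nothing beyond the remark that it ``follows from Taylor's formula for $\Leg$''. For the identity \eqref{hess} your argument --- restrict $\Leg$ to the segment $[x,y]\subset\inte(\dom\Leg)$, apply the one-dimensional Lagrange form of Taylor's theorem to $g(t)=\Leg(y+t(x-y))$, and subtract the affine part to recognize $\Bre(x,y)$ from \eqref{Bdiv} --- is exactly the intended proof, and the regularity you check ($g\in C^1([0,1])$, $g''$ on $(0,1)$) is the right amount.

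For the quantitative second half you do not produce a proof, and you say so; but your diagnosis of where the argument breaks is the essential point, and it deserves to be made explicit that the obstruction is not an artifact of your method. Writing $u=x-y$, $H=\nabla^2\Leg(y)$, $Q=\tfrac12\scalarp{Hu,u}$, any bound of the form $\abs{\Bre(x,y)-Q}=o(\norm{u}^2)$ --- whether from Fr\'echet second-order differentiability at $y$ or from \eqref{hess} plus continuity of $\nabla^2\Leg$ --- controls the numerator by $\norm{u}^2$, while the denominator is only bounded below by $\tfrac12\lambda^+\norm{u_1}^2$ with $u_1$ the component of $u$ in $\range(H)$, and the ratio $\norm{u}^2/\norm{u_1}^2$ is unbounded on $\{u\notin\Ker H\}$ whenever $H$ is rank deficient. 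Indeed the claim fails as literally stated: for the Legendre function $\Leg(x_1,x_2)=\tfrac12x_1^2+\tfrac14x_2^4$ on $\R^2$ one has $H=\nabla^2\Leg(0)=\Diag(1,0)$ at $y=0$, and along $x=(t^4,t)$ (which satisfies $x-y\notin\Ker H$ and $\norm{x-y}\to0$) the relative error equals $(t^4/4)/(t^8/2)=1/(2t^4)\to+\infty$, so no single $\delta$ serves all admissible $x$. The statement is true only under a strengthened hypothesis --- e.g.\ $x-y\in\range(H)$, or $x-y$ confined to a cone on which $\norm{Hu}\geq c\norm{u}$ --- which is the uniformity you correctly identified as the crux; this matters downstream, since Step~4 of Lemma~\ref{lm:gamma_I} invokes the bound with only the pointwise condition $\tilde y-y_\star\notin\Ker(H)$. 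So: first half correct and identical in spirit to the paper's one-line justification; second half not established by you, and not establishable without amending the hypothesis --- you located the gap but stopped short of exhibiting the counterexample that confirms it.
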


In addition to the above facts, we will use the following ones, too.
	
\begin{fact} \label{fact_inf}
Let $A\colon\Hh\to\Gg$ be a linear operator and let $A^\dag$ be its \emph{Moore-Penrose pseudoinverse}. 
Then $A A^\dag = A (A^* A)^\dag A^*$ is the orthogonal projector onto $\range(A)$, and 
$\norm{A^\dagger}^{-1} = \inf_{z\in\Ker(A)^{\perp}\setminus\{0\}} \norm{Az}/\norm{z}$ is the smallest positive 
singular value of $A$.
\end{fact}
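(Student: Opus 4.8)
The plan is to reduce the statement to the singular value decomposition (SVD), which is available since $\Hh$ and $\Gg$ are finite-dimensional Euclidean spaces, supplementing it where convenient with the four defining (Penrose) conditions of the pseudoinverse: $AA^\dagger A=A$, $A^\dagger AA^\dagger=A^\dagger$, $(AA^\dagger)^*=AA^\dagger$, and $(A^\dagger A)^*=A^\dagger A$.

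First I would verify that $AA^\dagger$ is the orthogonal projector onto $\range(A)$. The third Penrose condition makes $AA^\dagger$ self-adjoint, and the second gives idempotency, since $(AA^\dagger)^2=A(A^\dagger AA^\dagger)=AA^\dagger$; hence $AA^\dagger$ is an orthogonal projector. Its range is $\range(A)$, because $\range(AA^\dagger)\subseteq\range(A)$ trivially, while $AA^\dagger A=A$ shows that every vector $Ax$ is fixed by $AA^\dagger$. To obtain the advertised formula $AA^\dagger=A(A^*A)^\dagger A^*$, I would prove the identity $A^\dagger=(A^*A)^\dagger A^*$ using the SVD $A=U\Sigma V^*$: then $A^\dagger=V\Sigma^\dagger U^*$ and $A^*A=V\Sigma^2V^*$, so $(A^*A)^\dagger=V(\Sigma^2)^\dagger V^*$, and the scalar relation $(\sigma^2)^\dagger\sigma=\sigma^\dagger$ (valid for every $\sigma\ge0$) yields $(\Sigma^2)^\dagger\Sigma=\Sigma^\dagger$ entrywise, whence $(A^*A)^\dagger A^*=V(\Sigma^2)^\dagger V^*V\Sigma U^*=V\Sigma^\dagger U^*=A^\dagger$.

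For the norm identity I would compute both sides through the same SVD. Let $\sigma_1\ge\cdots\ge\sigma_r>0$ be the nonzero singular values, with corresponding right singular vectors $v_1,\dots,v_r$. The singular values of $A^\dagger=V\Sigma^\dagger U^*$ are the $\sigma_i^{-1}$, so $\norm{A^\dagger}=\sigma_r^{-1}$ and thus $\norm{A^\dagger}^{-1}=\sigma_r$. On the other side, $\Ker(A)^\perp=\range(A^*)$ is exactly the span of $v_1,\dots,v_r$; writing any nonzero $z=\sum_{i=1}^r c_iv_i$ gives $\norm{Az}^2=\sum_{i=1}^r\sigma_i^2c_i^2\ge\sigma_r^2\norm{z}^2$, with equality at $z=v_r$, so the infimum equals $\sigma_r$. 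Matching the two computations proves the equality and identifies the common value as the smallest positive singular value.

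Honestly, there is no substantial obstacle here: the result is standard linear algebra. The only points needing a little care are the rank-deficient case, in which $A$ fails to be injective on all of $\Hh$ and $\Sigma$ carries zero diagonal entries, and the associated bookkeeping in $(\Sigma^2)^\dagger\Sigma=\Sigma^\dagger$; the SVD handles both uniformly, which is precisely why I would route the whole argument through it.
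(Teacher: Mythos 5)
Your proof is correct. The paper states this as a Fact without any proof or citation, treating it as standard linear algebra, so there is no argument in the paper to compare against; your SVD-based derivation (together with the Penrose conditions for the projector part) establishes everything claimed. The only cosmetic point is that when $A$ is not square the diagonal factor $\Sigma$ is rectangular, so one should write $A^*A=V\Sigma^*\Sigma V^*$ and $(\Sigma^*\Sigma)^\dagger\Sigma^*=\Sigma^\dagger$ rather than $\Sigma^2$; your entrywise computation handles this, including the rank-deficient case, exactly as you note.
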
	

\begin{fact}[{\cite[Example~5.1.5]{DUR2010}}]
\label{f:20190112d}
Let $\zeta_1$ and $\zeta_2$ be independent random variables with values in the measurable spaces 
$\mathcal{Z}_1$ and $\mathcal{Z}_2$ respectively. Let $\varphi\colon \mathcal{Z}_1\times \mathcal{Z}_2 \to \R$ 
be measurable and suppose that $\EE[\abs{\varphi(\zeta_1,\zeta_2)}]<+\infty$. Then 
$\EE[\varphi(\zeta_1,\zeta_2) \,\vert\, \zeta_1] = \psi(\zeta_1)$, where for all $z_1 \in \mathcal{Z}_1$, 
$\psi(z_1) = \EE[\varphi(z_1, \zeta_2)]$.
\end{fact}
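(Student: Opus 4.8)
The plan is to verify directly that the candidate $\psi(\zeta_1)$ fulfills the two defining properties of the conditional expectation $\EE[\varphi(\zeta_1,\zeta_2)\,\vert\,\zeta_1]$: that it is $\sigma(\zeta_1)$-measurable, and that it matches $\varphi(\zeta_1,\zeta_2)$ in integral against every set of $\sigma(\zeta_1)$. The one structural fact that makes everything go through is that independence turns the joint law of $(\zeta_1,\zeta_2)$ into a product measure, so Fubini's theorem becomes available. Accordingly, I would let $\mu_1$ and $\mu_2$ denote the laws of $\zeta_1$ and $\zeta_2$ on $\mathcal{Z}_1$ and $\mathcal{Z}_2$; by independence the law of $(\zeta_1,\zeta_2)$ is $\mu_1\otimes\mu_2$.

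The first and only genuinely delicate point is the well-definedness and measurability of $\psi$, which is exactly where the integrability hypothesis is consumed. I would apply Tonelli's theorem to the nonnegative function $\abs{\varphi}$: the map $z_1\mapsto\int_{\mathcal{Z}_2}\abs{\varphi(z_1,z_2)}\,\mu_2(\ud z_2)$ is then measurable, and its $\mu_1$-integral equals $\EE[\abs{\varphi(\zeta_1,\zeta_2)}]<+\infty$. Hence this inner integral is finite for $\mu_1$-almost every $z_1$, so that $\psi(z_1)=\int_{\mathcal{Z}_2}\varphi(z_1,z_2)\,\mu_2(\ud z_2)=\EE[\varphi(z_1,\zeta_2)]$ is well-defined and measurable in $z_1$ off a $\mu_1$-null set. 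Consequently $\psi(\zeta_1)$ is $\sigma(\zeta_1)$-measurable, being the composition of the measurable $\psi$ with $\zeta_1$. I regard the careful separation of Tonelli (applied to $\abs{\varphi}$, to secure measurability and finiteness) from Fubini (applied below to $\varphi$ itself) as the main obstacle, in the sense that it is the step where one must not be cavalier.

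It then remains to check the integral identity. Recalling that every set in $\sigma(\zeta_1)$ has the form $\{\zeta_1\in B\}$ for some measurable $B\subseteq\mathcal{Z}_1$, and that $\EE[\abs{\varphi(\zeta_1,\zeta_2)}]<+\infty$ licenses Fubini's theorem for $\varphi\cdot\mathbf{1}_{B\times\mathcal{Z}_2}$, I would write
\begin{align*}
\EE\big[\varphi(\zeta_1,\zeta_2)\,\mathbf{1}_{\{\zeta_1\in B\}}\big]
&=\int_{B\times\mathcal{Z}_2}\varphi\,\ud(\mu_1\otimes\mu_2)\\
&=\int_{B}\Big(\int_{\mathcal{Z}_2}\varphi(z_1,z_2)\,\mu_2(\ud z_2)\Big)\,\mu_1(\ud z_1)
=\EE\big[\psi(\zeta_1)\,\mathbf{1}_{\{\zeta_1\in B\}}\big].
\end{align*}
Since this holds for every measurable $B$ and $\psi(\zeta_1)$ is $\sigma(\zeta_1)$-measurable, the defining characterization of conditional expectation gives $\EE[\varphi(\zeta_1,\zeta_2)\,\vert\,\zeta_1]=\psi(\zeta_1)$, as claimed. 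An alternative route, should one wish to avoid quoting Fubini as a black box, is the standard measure-theoretic induction: establish the identity first for product indicators $\varphi=\mathbf{1}_{B_1\times B_2}$, extend by linearity to simple functions of product type, pass to nonnegative measurable $\varphi$ by monotone convergence (or a monotone class argument), and finally split a general integrable $\varphi$ into positive and negative parts.
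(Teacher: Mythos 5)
Your proof is correct and is exactly the standard Fubini--Tonelli argument; the paper itself gives no proof of this Fact, citing Durrett's Example~5.1.5, whose argument is precisely the one you carry out (product law via independence, Tonelli on $\abs{\varphi}$ for measurability and a.e.\ finiteness of the inner integral, then Fubini to verify the defining integral identity of conditional expectation). The only cosmetic remark is that $\psi$ is well-defined only for $\mu_1$-almost every $z_1$, which you correctly note and which suffices since conditional expectations are determined only up to null sets.
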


\begin{fact}[{\cite[Theorem~3.2.4]{DUR2010}}]
\label{fact_convRV}
Let $(x_k)_{k \in \N}$ be a sequence of $X$-valued random variable and let $x$ be an $X$-valued random variable. 
Then the following hold.
\begin{enumerate}[label={\rm (\roman*)}]
\item 
Suppose that $x_k$ are uniformly essentially bounded, i.e., $\sup_{k \in \N} \esssup \norm{x_k}<+\infty$. 
Then $x_k \to x$  $\PP$-a.s. $\Rightarrow$ $\EE[\norm{x_k-x}^2]\to0$. 
\item 
Suppose that $x_k \in U\subset X$ $\PP$-a.s.~and $T\colon U \to Y$ is continuous. Then $x_k \to x$ in distribution 
$\Rightarrow$ $T(x_k) \to T(x)$ in distribution.
\end{enumerate}
\end{fact}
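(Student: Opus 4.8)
The plan is to recognize the two items as standard convergence results from probability theory and reduce each to a classical theorem. Item~(i) is an instance of the bounded convergence theorem, whereas item~(ii) is the continuous mapping theorem; the latter is the more delicate one, since continuity of $T$ is assumed only on the subset $U$ rather than on all of $X$.

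For item~(i), I would set $M := \sup_{k \in \N} \esssup \norm{x_k} < +\infty$ and first argue that the almost sure limit inherits the same bound. Since for each $k$ we have $\norm{x_k} \leq M$ $\PP$-a.s.\ and $x_k \to x$ $\PP$-a.s., passing to the limit gives $\norm{x} \leq M$ $\PP$-a.s. Consequently $\norm{x_k - x}^2 \leq (2M)^2$ $\PP$-a.s.\ for every $k$, so the constant $(2M)^2$ serves as an integrable dominating function. Since moreover $\norm{x_k - x}^2 \to 0$ $\PP$-a.s., the dominated (bounded) convergence theorem yields $\EE[\norm{x_k - x}^2] \to 0$, which is the claim.

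For item~(ii), the cleanest route is the Skorokhod representation theorem. Because $x_k \to x$ in distribution, there exist $X$-valued random variables $\tilde{x}_k$ and $\tilde{x}$ on a common probability space, with the same laws as $x_k$ and $x$ respectively, such that $\tilde{x}_k \to \tilde{x}$ almost surely. As $\tilde{x}_k$ has the same law as $x_k$, and $x_k \in U$ $\PP$-a.s., also $\tilde{x}_k \in U$ almost surely. I would then ensure that the limit point lies in the region where $T$ is continuous: when $U$ is closed the almost sure limit $\tilde{x}$ of points of $U$ again lies in $U$, and more generally one needs the law of $x$ to be supported on the continuity set of $T$. Under this condition, continuity of $T$ on $U$ gives $T(\tilde{x}_k) \to T(\tilde{x})$ almost surely, hence in distribution. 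Since convergence in distribution depends only on the laws, and the laws of $T(\tilde{x}_k)$ and $T(\tilde{x})$ coincide with those of $T(x_k)$ and $T(x)$, we conclude $T(x_k) \to T(x)$ in distribution.

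The main obstacle, and the only genuinely non-routine point, is the treatment of the continuity set in item~(ii): the hypothesis furnishes continuity of $T$ merely on $U$, so one must guarantee that the weak limit is supported on $U$ (or, more generally, on the continuity set of $T$) before the continuous mapping argument applies. An alternative to Skorokhod that avoids an explicit coupling is the portmanteau characterization via closed sets, namely verifying $\limsup_k \PP(T(x_k) \in F) \leq \PP(T(x) \in F)$ for every closed $F \subseteq Y$; this again reduces to relating the relatively closed set $T^{-1}(F) \cap U$ to a genuine closed subset of $X$, so the same support condition resurfaces. Either way, once the support of the limit is controlled, both items follow from classical theorems.
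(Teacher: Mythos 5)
The paper does not prove this statement at all: it is recorded as a \emph{Fact} with a citation to Durrett, so there is no internal proof to compare against, and your job here is really to check that the cited classical results do cover both items. Your argument for (i) is the standard bounded convergence proof and is correct: the a.s.\ limit inherits the essential bound $M$, so $\norm{x_k-x}^2\leq 4M^2$ a.s.\ and dominated convergence applies. Your argument for (ii) via Skorokhod representation is also essentially correct, and you rightly identify the only delicate point, namely that $T$ is continuous only on $U$. Two remarks sharpen your treatment of that point. First, the condition you are worried about is already forced by the statement making sense: for $T(x)$ to be defined one must have $x\in U$ $\PP$-a.s., and since the Skorokhod copy $\tilde{x}$ has the same law as $x$ (and $U$ may be taken Borel), $\tilde{x}\in U$ a.s.\ as well; then $\tilde{x}_k\to\tilde{x}$ with all terms in $U$ is convergence in the subspace topology of $U$, so continuity of $T$ on $U$ gives $T(\tilde{x}_k)\to T(\tilde{x})$ a.s.\ --- no closedness of $U$ is needed. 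Second, in the only place the paper invokes (ii), $U=\inte(\dom\Leg)$ is \emph{open}, so any measurable extension of $T$ to $X$ is continuous at every point of $U$ and the classical continuous mapping theorem (discontinuity set of $\PP_x$-measure zero) applies directly, which is presumably why the authors felt a bare citation sufficed. So your proof is correct; just replace the hedged ``when $U$ is closed \dots\ one needs the law of $x$ supported on the continuity set'' by the observation that $x\in U$ a.s.\ is implicit in the statement and transfers to the Skorokhod copies.
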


\subsection{The Bregman projection onto a convex set}
In this section we recall the definition and the main properties of the Bregman projection operator.
We first address the general case of a convex set and then the special case of an affine set.

Let $C\subset\Hh$ be a nonempty closed and convex set. Let $\Leg$ be a Legendre function on $\Hh$ such 
that $C \cap \inte(\dom \Leg) \neq \varnothing$ and let $\Bre$ be its associated Bregman distance. 
Let $x \in \inte(\dom \Leg)$. Then, the following optimization problem 
\begin{equation}
\label{eq:BregProj}
\min_{z\in C}{\Bre(z,x}) 
\end{equation}
has a unique solution which is in $C\cap \inte(\dom \Leg)$ \cite[Corollary~7.9]{BAU2001}. In other words, 
the operator $\Pc_{C}\colon \inte(\dom \Leg)\to C\cap \inte(\dom \Leg)$ such that, for all $x \in \inte(\dom \Leg)$,
\begin{equation}
\label{eq:Breproj}
\Pc_{C}(x):=\argmin_{z\in C}{\Bre(z,x}),
\end{equation}
is well defined. It is called the \emph{Bregman projector} onto $C$ with respect to $\Leg$. The point $\Pc_{C}(x)$ 
is the \emph{Bregman projection} of $x$ onto $C$ with respect to $\Leg$ and is characterized by the following 
variational inequality \cite[Proposition~3.16]{BAU1997}
\begin{equation}\label{projC_ineq}
(\forall\, z \in C)\qquad
\big\langle z- \Pc_{C}(x), \nabla\Leg(x)-\nabla\Leg(\Pc_{C}(x)) \big\rangle\leq 0, 
\end{equation}
or equivalently, using \eqref{3p}, by the condition
\begin{equation}\label{projC}
(\forall\, z \in C
)\qquad
\Bre(z,x) \geq \Bre(z,\Pc_C(x)) + \Bre(\Pc_C(x),x).
\end{equation}
In the special case that $C$ is an affine set, in (\ref{projC}) and (\ref{projC_ineq}) equalities hold. We also define 
the \emph{Bregman distance to $C$} with respect to $\phi$ as
\begin{equation}
\label{eq:Bredist}
\Bdist_C\colon \inte(\dom \Leg) \to \left[0,+\infty\right[ \colon x \mapsto \inf_{z \in C} \Bre(z,x).
\end{equation}
As regards the Bregman projection, the following holds \cite{BAU2009}.

\begin{fact}
\label{prop:projproperties}
Let $C\subset \Hh$ be a nonempty closed convex set and let $\Pc_C$ be the Bregman projection onto $C$ as 
defined in \eqref{eq:Breproj}. Then the following hold.
\begin{enumerate}[label={\rm (\roman*)}]
\item\label{prop:projproperties_0} 
$(\forall\, x\in \inte(\dom\Leg))$\ \ $P_C(x) = x\ \Leftrightarrow\ x \in C\ \Leftrightarrow\ D_C(x)=0$.
\item\label{prop:projproperties_iii} 
$(\forall\, x \in C)\big(\forall\, y \in \inte(\dom\Leg)\big)$ $\Bre(x,P_C(y)) \leq \Bre(x,y) - \Bre(P_C(y), y)$.
\item\label{prop:projproperties_iv} 
Suppose that $\dom\Leg^*$ is open. Then the operator $P_C\colon\inte(\dom\Leg)\to\inte(\dom\Leg)\cap C$ and 
the function $D_C\colon\inte(\dom\Leg)\to\R$ are continuous.
\end{enumerate}
\end{fact}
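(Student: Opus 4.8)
\emph{Parts (i) and (ii).} These follow directly from the characterizations already recorded. For the equivalences in~(i) I would argue in a cycle using the strict positivity of the Bregman distance off the diagonal (item~\ref{prop:bregman_div_iia}): if $x\in C$ then, as $x\in C\cap\inte(\dom\Leg)$, $D_C(x)=\inf_{z\in C}\Bre(z,x)\le\Bre(x,x)=0$, and since $\Bre\ge0$ this gives $D_C(x)=0$; if $D_C(x)=0$, the infimum is attained at $P_C(x)$, so $\Bre(P_C(x),x)=0$ and hence $P_C(x)=x$ by item~\ref{prop:bregman_div_iia}; and $P_C(x)=x$ trivially forces $x\in C$ since $P_C(x)\in C$. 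For~(ii) I would simply read the projection inequality \eqref{projC} for the projection of $y$ at the specific point $z=x\in C$, obtaining $\Bre(x,y)\ge\Bre(x,P_C(y))+\Bre(P_C(y),y)$, which rearranges to the claim.

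\emph{Part (iii).} This is the substantive item, and the plan is a compactness-and-identification argument. Fix $y\in\inte(\dom\Leg)$ and a sequence $y_k\to y$ in $\inte(\dom\Leg)$; write $x_k=P_C(y_k)$ and $x^\star=P_C(y)\in C\cap\inte(\dom\Leg)$. First I would trap the tail of $(x_k)$ in a fixed compact set. Applying Part~(ii) with $x=x^\star$ gives $\Bre(x^\star,x_k)\le\Bre(x^\star,y_k)$, and since $y_k\to y$ with $x^\star\in\inte(\dom\Leg)$, the joint continuity of $\Bre$ (item~\ref{prop:bregman_div_iii}) bounds the right-hand side, say by $M$. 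Here the hypothesis that $\dom\Leg^*$ is open enters: by item~\ref{prop:bregman_div_ix} the sublevel set $\{z:\Bre(x^\star,z)\le M\}$ is compact, and it is contained in $\inte(\dom\Leg)$ because $\Bre(x^\star,\cdot)$ is finite only there (by \eqref{Bdiv}). Thus $(x_k)$ lies in a compact subset of $\inte(\dom\Leg)$.

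Next I would identify every cluster point. If $x_{k_j}\to\bar x$ along a subsequence, then $\bar x\in C$ (as $C$ is closed) and $\bar x\in\inte(\dom\Leg)$ (as the compact sublevel set sits inside $\inte(\dom\Leg)$). Passing to the limit in the variational inequality \eqref{projC_ineq} written for $x_{k_j}=P_C(y_{k_j})$, namely $\scalarp{z-x_{k_j},\nabla\Leg(y_{k_j})-\nabla\Leg(x_{k_j})}\le0$ for all $z\in C$, and using that $\nabla\Leg$ is continuous on $\inte(\dom\Leg)$ (a Legendre property), I get $\scalarp{z-\bar x,\nabla\Leg(y)-\nabla\Leg(\bar x)}\le0$ for all $z\in C$. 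This is exactly the characterization of $\bar x$ as the Bregman projection of $y$ onto $C$, so by uniqueness $\bar x=x^\star$. Since $(x_k)$ lives in a compact set and all its cluster points coincide with $x^\star$, the whole sequence converges, i.e.\ $P_C(y_k)\to P_C(y)$, which is the continuity of $P_C$. Continuity of $D_C$ is then immediate: $D_C(y_k)=\Bre(P_C(y_k),y_k)\to\Bre(P_C(y),y)=D_C(y)$ by item~\ref{prop:bregman_div_iii} together with $P_C(y_k)\to P_C(y)$ and $y_k\to y$.

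The only genuine obstacle is the compactness step. Without the openness of $\dom\Leg^*$ the projections $x_k$ could in principle escape to $\mathrm{bdry}(\dom\Leg)$, where $\nabla\Leg$ blows up; then neither the extraction of a cluster point inside $\inte(\dom\Leg)$ nor the limiting procedure in \eqref{projC_ineq} would be justified. Item~\ref{prop:bregman_div_ix} is precisely the ingredient that rules this out, so pinning the argument on the compact sublevel sets it provides is the heart of the proof.
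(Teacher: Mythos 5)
The paper does not prove this statement: it is recorded as a Fact imported from the reference \cite{BAU2009}, so there is no internal proof to compare against. Your argument is nevertheless correct and self-contained given the tools already listed in the paper: (i) and (ii) follow exactly as you say from Fact~\ref{prop:bregman_div}\ref{prop:bregman_div_iia} and from \eqref{projC} evaluated at $z=x$, and your proof of (iii) is the standard compactness-and-identification argument, with the openness of $\dom\Leg^*$ entering precisely where you place it, namely through the compact sublevel sets of $\Bre(x^\star,\cdot)$ from Fact~\ref{prop:bregman_div}\ref{prop:bregman_div_ix}, and with the cluster point identified via the variational characterization \eqref{projC_ineq} together with the continuity of $\nabla\Leg$ on $\inte(\dom\Leg)$ and the uniqueness of the projection.
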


We now consider Bregman projections onto affine sets. Hence, in the rest of the section we assume that 
\begin{equation}
C :=\{z\in \Hh \,\vert\, Az=b\},
\end{equation}
where $A\colon\Hh\to\Gg$ is a linear operator between Euclidean spaces and $b\in\Gg$. We wish to characterize 
the Bregman projector onto  $C$. Let $x\in\inte(\dom \Leg)$. Then, problem \eqref{eq:BregProj} turns into
\begin{equation}
\label{eq:mainprob}
\min_{\substack{z \in \Hh\\[0.3ex] A z = b}} \Bre(z,x).
\end{equation}
The dual problem of \eqref{eq:mainprob} (in the sense of Fenchel-Rockafellar) is 
\begin{equation}
\label{eq:maindualprob}
\min_{\lambda \in \Gg} 
\Leg^{*}(\nabla\Leg(x)+A^{*}\lambda)-\Leg^{*}(\nabla\Leg(x))-\scalarp{\lambda,b} =:
\Psi^x_{\Lc}(\lambda).
\end{equation}
We denote by $x_\star =  \Pc_{\Lc}(x)$ the unique solution of \eqref{eq:mainprob} and by $\lambda_\star$ a solution 
(not necessarily unique) of the dual problem \eqref{eq:maindualprob}. They are characterized by the following KKT
conditions
\begin{equation}
\label{eq:KKT}
x_\star \in \intdom,\quad
A x_\star = b, \quad\text{and}\quad \nabla \Leg(x)+ A^*\lambda_\star = \nabla\Leg(x_\star).
\end{equation}

A direct consequence of the KKT conditions are the following useful properties. 

\begin{fact}\label{prop:bregman_proj}
Let $x\in\inte(\dom \Leg)$ and
$C =\{z\in\Hh \,\vert\,  Az=b\}$. Let $x_\star = P_{C}(x)$ and let $\lambda_{\star}$ be a minimizer of $\Psi_{C}^{x}$. Then the following holds.

\begin{enumerate}[label={\rm (\roman*)}]
\item\label{prop:bregman_proj_0} 
$(\forall\ x,y\in\inte(\dom\Leg))$\ $P_C(x)=P_C(y) \Leftrightarrow \nabla\Leg(x)-\nabla\Leg(y)\in\range(A^*)$.
\item\label{prop:bregman_proj_ii} 
$x_{\star} = \nabla \Leg^{*}\left(\nabla\Leg(x) + A^{*}\lambda_{\star}\right)$ and 
$A \nabla \Leg^*(\nabla \Leg(x)+ A^*\lambda_\star)=b$.
\item\label{prop:bregman_proj_i} 
(Pythagora's theorem) $(\forall\, z\in \Lc\cap \dom\Leg)$\ $\Bre(z,x)  = \Bre(z,x_{\star}) +\Bre(x_{\star},x)$.
\end{enumerate}
\end{fact}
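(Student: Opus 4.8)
The plan is to derive all three items directly from the KKT conditions \eqref{eq:KKT} characterizing the primal--dual pair $(x_\star,\lambda_\star)$, exploiting throughout that $\Leg$ is Legendre so that $\nabla\Leg\colon\intdom\to\inte(\dom\Leg^*)$ and $\nabla\Leg^*$ are mutually inverse bijections.

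I would begin with item \ref{prop:bregman_proj_ii}, which is essentially a restatement of optimality. The third relation in \eqref{eq:KKT} reads $\nabla\Leg(x)+A^*\lambda_\star=\nabla\Leg(x_\star)$; applying $\nabla\Leg^*$ to both sides inverts $\nabla\Leg$ and yields $x_\star=\nabla\Leg^*(\nabla\Leg(x)+A^*\lambda_\star)$. Substituting this into the feasibility condition $Ax_\star=b$ from \eqref{eq:KKT} gives the second identity $A\nabla\Leg^*(\nabla\Leg(x)+A^*\lambda_\star)=b$.

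Next I would establish the Pythagoras identity \ref{prop:bregman_proj_i} via the three-point identity, Fact~\ref{prop:bregman_div}\,\ref{prop:bregman_div_iv}. Applied to the points $z$, $x_\star$, $x$ (here $x,x_\star\in\intdom$ and $z\in\dom\Leg$, so each term is finite) it gives
\[
\Bre(z,x)=\Bre(z,x_\star)+\Bre(x_\star,x)+\scalarp{z-x_\star,\,\nabla\Leg(x_\star)-\nabla\Leg(x)}.
\]
The crux is that the cross term vanishes: by the third KKT relation $\nabla\Leg(x_\star)-\nabla\Leg(x)=A^*\lambda_\star$, so the inner product equals $\scalarp{A(z-x_\star),\lambda_\star}$, and since both $z$ and $x_\star$ lie in $C$ we have $A(z-x_\star)=b-b=0$. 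This yields the claim.

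Finally, item \ref{prop:bregman_proj_0} follows from the KKT characterization together with uniqueness of the Bregman projection. For the forward implication, if $P_C(x)=P_C(y)=:w$, then \eqref{eq:KKT} supplies dual variables $\lambda_x,\lambda_y$ with $\nabla\Leg(x)+A^*\lambda_x=\nabla\Leg(w)=\nabla\Leg(y)+A^*\lambda_y$, whence $\nabla\Leg(x)-\nabla\Leg(y)=A^*(\lambda_y-\lambda_x)\in\range(A^*)$. For the converse, suppose $\nabla\Leg(x)-\nabla\Leg(y)=A^*\mu$ for some $\mu\in\Gg$, and set $w=P_C(y)$ with associated dual variable $\lambda_y$, so that $w\in C\cap\intdom$ and $\nabla\Leg(y)+A^*\lambda_y=\nabla\Leg(w)$. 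A direct substitution then shows that $(w,\lambda_y-\mu)$ satisfies the whole KKT system \eqref{eq:KKT} for the projection of $x$ onto $C$, so by uniqueness $P_C(x)=w=P_C(y)$. The only step demanding a genuine idea is the vanishing of the cross term in \ref{prop:bregman_proj_i}, which hinges on combining feasibility of both $z$ and $x_\star$ with the fact that the gradient difference lies in $\range(A^*)$; the remaining arguments are routine manipulations of the optimality conditions.
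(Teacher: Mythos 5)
Your proposal is correct and follows exactly the route the paper intends: the paper presents this Fact as ``a direct consequence of the KKT conditions'' \eqref{eq:KKT} without spelling out details, and your derivation---inverting $\nabla\Leg$ for \ref{prop:bregman_proj_ii}, killing the cross term in the three-point identity via $\nabla\Leg(x_\star)-\nabla\Leg(x)=A^*\lambda_\star$ and $A(z-x_\star)=0$ for \ref{prop:bregman_proj_i}, and using the KKT characterization plus uniqueness of the projection for both directions of \ref{prop:bregman_proj_0}---is precisely the intended argument. No gaps.
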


\begin{remark}
\label{rmk:hyperproj}
\normalfont
Suppose that $C$ is a hyperplane, that is, $C=\{x\in X\,\vert\, \scalarp{a,x}=b\}$. Then, by  
Fact~\ref{prop:bregman_proj}\ref{prop:bregman_proj_ii}, any dual solution $\lambda_\star$ satisfies
$\scalarp{a,\nabla \Leg^*(\nabla \Leg(x)+ \lambda_\star a)}=b$, which is an equation in $\R$ and hence can 
be easily solved via a number of iterative methods (e.g., bisection, gradient descent, Newton). Moreover, in this 
case the dual solution is unique since $\lambda_\star = \scalarp{a,\nabla\Leg(x_\star)-\nabla\Leg(x)} / \norm{a}^2$. 
In \cite{DT2007} several examples in which such equation can be solved explicitly are provided. 
\end{remark}

\begin{lemma}
\label{lm:aff_proj_1}
Let $C_1$ and $C_2$ be two closed affine sets such that $C_2 \subset C_1$ and let $x \in \inte(\dom \Leg)$.
Then, $P_{C_2}(x) = P_{C_2}(P_{C_1}(x))$ and $D_{C_2}(P_{C_1}(x))+ D_{C_1}(x) = D_{C_2}(x)$.
\end{lemma}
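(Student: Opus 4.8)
The plan is to reduce both identities to the two facts already established for projections onto affine sets: the coincidence criterion of Fact~\ref{prop:bregman_proj}\ref{prop:bregman_proj_0} (namely $P_C(x)=P_C(y)\Leftrightarrow \nabla\Leg(x)-\nabla\Leg(y)\in\range(A^*)$) and Pythagoras' theorem, Fact~\ref{prop:bregman_proj}\ref{prop:bregman_proj_i}. I write $C_1=\{z\,\vert\,A_1z=b_1\}$, $C_2=\{z\,\vert\,A_2z=b_2\}$ and abbreviate $x_1=P_{C_1}(x)$, $x_2=P_{C_2}(x)$.

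The one genuinely geometric input I would record first is the linear-algebraic consequence of the inclusion. Choosing $p\in C_2\subset C_1$ (nonempty since the projections are well defined) gives $C_i=p+\Ker(A_i)$, so $C_2\subset C_1$ is equivalent to $\Ker(A_2)\subset\Ker(A_1)$; passing to orthogonal complements and using $\Ker(A_i)^{\perp}=\range(A_i^*)$ yields $\range(A_1^*)\subset\range(A_2^*)$. This is elementary and is the only step that uses the inclusion directly.

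For the first identity I use the KKT conditions \eqref{eq:KKT} for the projection onto $C_1$: there is a dual solution $\lambda_\star$ with $\nabla\Leg(x_1)=\nabla\Leg(x)+A_1^*\lambda_\star$, hence $\nabla\Leg(x)-\nabla\Leg(x_1)\in\range(A_1^*)\subset\range(A_2^*)$. Applying the equivalence of Fact~\ref{prop:bregman_proj}\ref{prop:bregman_proj_0} to the set $C_2$ then gives $P_{C_2}(x)=P_{C_2}(x_1)$, i.e. $P_{C_2}(x)=P_{C_2}(P_{C_1}(x))$. For the second identity I would first observe that, since each $C_i$ is affine, Pythagoras gives $\Bre(z,y)=\Bre(z,P_{C_i}(y))+\Bre(P_{C_i}(y),y)$ for every $z\in C_i\cap\dom\Leg$; minimizing over $z$ and using $\Bre\geq 0$ shows the infimum defining $D_{C_i}$ is attained at the projection, so $D_{C_i}(y)=\Bre(P_{C_i}(y),y)$. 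Now I apply Pythagoras for $C_1$ to the point $z=x_2=P_{C_2}(x)\in C_2\subset C_1$, obtaining $\Bre(x_2,x)=\Bre(x_2,x_1)+\Bre(x_1,x)$. Using the first part, $x_2=P_{C_2}(x_1)$, so $\Bre(x_2,x_1)=D_{C_2}(x_1)=D_{C_2}(P_{C_1}(x))$, while $\Bre(x_1,x)=D_{C_1}(x)$ and $\Bre(x_2,x)=D_{C_2}(x)$; substituting gives the claim.

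I do not expect a serious obstacle: the whole argument is bookkeeping on top of the affine-case facts. The only points needing a moment's care are the reduction $C_2\subset C_1\Rightarrow\range(A_1^*)\subset\range(A_2^*)$ and the verification that $D_{C_i}$ is realized by the projection, after which both identities follow from Pythagoras together with the coincidence criterion.
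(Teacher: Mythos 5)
Your proof is correct, but you reach the first identity by a genuinely different route than the paper. The paper's argument is purely variational and uses only Pythagoras (Fact~\ref{prop:bregman_proj}\ref{prop:bregman_proj_i}): for $z\in C_2\subset C_1$ it writes $\Bre(x_2,x_1)+\Bre(x_1,x)=\Bre(x_2,x)\leq\Bre(z,x)=\Bre(z,x_1)+\Bre(x_1,x)$, cancels $\Bre(x_1,x)$, and concludes $x_2=P_{C_2}(x_1)$ from the uniqueness of the minimizer; the additive identity for $D_{C_2}$ then drops out of the same display. You instead go through the dual characterization: you first translate $C_2\subset C_1$ into $\Ker(A_2)\subset\Ker(A_1)$ and hence $\range(A_1^*)\subset\range(A_2^*)$, then use the KKT conditions \eqref{eq:KKT} to place $\nabla\Leg(x)-\nabla\Leg(x_1)$ in $\range(A_1^*)\subset\range(A_2^*)$, and invoke the coincidence criterion of Fact~\ref{prop:bregman_proj}\ref{prop:bregman_proj_0}. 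Both are sound; the paper's route is shorter and needs no linear algebra on the representing operators, while yours makes explicit the range inclusion that the paper only introduces later (in the proof of Theorem~\ref{thm3}\ref{thm3_0}, where exactly your kernel/range argument reappears), so it connects the lemma more directly to the dual structure used in the convergence analysis. Two cosmetic remarks: your detour through Pythagoras to show that $D_{C_i}(y)=\Bre(P_{C_i}(y),y)$ is unnecessary, since this is immediate from the definitions \eqref{eq:Breproj} and \eqref{eq:Bredist}; and you could avoid appealing to the existence of a dual solution for $C_1$ by applying Fact~\ref{prop:bregman_proj}\ref{prop:bregman_proj_0} to the pair $(x,x_1)$ and the set $C_1$ itself, since $P_{C_1}(x)=x_1=P_{C_1}(x_1)$ already yields $\nabla\Leg(x)-\nabla\Leg(x_1)\in\range(A_1^*)$. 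Your handling of the second identity coincides with the paper's.
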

\begin{proof}
Let $x_i = P_{C_i}(x)$, $i=1,2$ and $z \in C_2$. Then using  Fact \ref{prop:bregman_proj} \ref{prop:bregman_proj_i}, 
$\Bre(x_2,x_1) + \Bre(x_1, x) = \Bre(x_2, x) \leq \Bre(z, x) = \Bre(z,x_1) + \Bre(x_1, x)$, which yields 
$\Bre(x_2,x_1) \leq \Bre(z,x_1)$. Hence $x_2 = P_{C_2}(x_1)$ and $D_{C_2}(x_1) + D_{C_1}(x) = D_{C_2}(x)$.
\end{proof}

\begin{lemma}\label{lm:main1}
Let $(x,\lambda)\in\inte(\dom \Leg) \times Y$ be such that $\nabla \Leg(x)+ A^*\lambda \in \inte(\dom \Leg^*)$. 
Then, the following hold.
\begin{enumerate}[label={\rm (\roman*)}]
\item\label{eq:20200406b}  
$(\forall\, z \in C\cap\dom\Leg)\quad\Psi^x_{\Lc}(\lambda)
= \Bre(z, \nabla \Leg^*(\nabla \Leg(x) + A^* \lambda)) - \Bre(z,x)$;
\item\label{main_rate11}
$(\forall\, z \in C\cap\dom\Leg)\quad
\Bre\big(z,\Pc_{C}(x)\big) \leq \Bre(z,\nabla\Leg^*(\nabla\Leg(x)+A^*\lambda))$.
\end{enumerate}
\end{lemma}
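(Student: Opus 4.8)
The plan is to reduce both parts to the explicit formula for the Bregman distance in Fact~\ref{prop:bregman_div}\ref{prop:bregman_div_0}, using the hypothesis to control the relevant points. First I would observe that the assumption $\nabla\Leg(x)+A^*\lambda\in\inte(\dom\Leg^*)$ guarantees that the point $y:=\nabla\Leg^*(\nabla\Leg(x)+A^*\lambda)$ is well defined and lies in $\inte(\dom\Leg)$, and that $\nabla\Leg(y)=\nabla\Leg(x)+A^*\lambda$, since $\nabla\Leg$ and $\nabla\Leg^*$ are mutually inverse bijections between $\inte(\dom\Leg)$ and $\inte(\dom\Leg^*)$. For part~\ref{eq:20200406b} I would then fix $z\in C\cap\dom\Leg$ and apply Fact~\ref{prop:bregman_div}\ref{prop:bregman_div_0} twice, with second arguments $y$ and $x$ respectively:
\[
\Bre(z,y)=\Leg(z)+\Leg^*(\nabla\Leg(x)+A^*\lambda)-\scalarp{z,\nabla\Leg(x)+A^*\lambda},
\]
\[
\Bre(z,x)=\Leg(z)+\Leg^*(\nabla\Leg(x))-\scalarp{z,\nabla\Leg(x)}.
\]

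Subtracting, the terms $\Leg(z)$ and $\scalarp{z,\nabla\Leg(x)}$ cancel, leaving
\[
\Bre(z,y)-\Bre(z,x)=\Leg^*(\nabla\Leg(x)+A^*\lambda)-\Leg^*(\nabla\Leg(x))-\scalarp{z,A^*\lambda}.
\]
The only place where membership $z\in C$ is used is the last term: since $Az=b$, one has $\scalarp{z,A^*\lambda}=\scalarp{Az,\lambda}=\scalarp{b,\lambda}=\scalarp{\lambda,b}$, so the right-hand side is precisely $\Psi^x_C(\lambda)$ as defined in \eqref{eq:maindualprob}. This establishes~\ref{eq:20200406b}.

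For part~\ref{main_rate11} the idea is to compare a generic $\lambda$ against a dual minimizer. Let $\lambda_\star$ be a minimizer of $\Psi^x_C$. By the KKT conditions \eqref{eq:KKT} we have $\nabla\Leg(x)+A^*\lambda_\star=\nabla\Leg(x_\star)$ with $x_\star=P_C(x)\in\inte(\dom\Leg)$, hence $\nabla\Leg(x)+A^*\lambda_\star\in\inte(\dom\Leg^*)$ and $\nabla\Leg^*(\nabla\Leg(x)+A^*\lambda_\star)=x_\star$ by Fact~\ref{prop:bregman_proj}\ref{prop:bregman_proj_ii}. Thus $\lambda_\star$ also satisfies the interiority hypothesis of the lemma, and applying part~\ref{eq:20200406b} to both $\lambda$ and $\lambda_\star$ gives, for every $z\in C\cap\dom\Leg$,
\[
\Bre(z,x_\star)-\Bre(z,x)=\Psi^x_C(\lambda_\star)\leq\Psi^x_C(\lambda)=\Bre(z,y)-\Bre(z,x),
\]
where the middle inequality is just optimality of $\lambda_\star$. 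Cancelling $\Bre(z,x)$ yields $\Bre(z,P_C(x))\leq\Bre(z,y)$, which is~\ref{main_rate11}.

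The computation is essentially bookkeeping, so I expect no serious obstacle; the only points requiring care are the well-definedness checks. Concretely, one must verify at the outset that $y\in\inte(\dom\Leg)$ so that Fact~\ref{prop:bregman_div}\ref{prop:bregman_div_0} applies with $y$ as second argument, and in part~\ref{main_rate11} one must confirm that a dual minimizer $\lambda_\star$ exists and that it too meets the interiority hypothesis. Both follow from the KKT conditions recorded in \eqref{eq:KKT}, and no constraint qualification or separate duality argument is needed beyond the existence of $\lambda_\star$ already assumed in the setup.
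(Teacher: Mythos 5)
Your proof is correct, but it takes a genuinely different route from the paper's on both parts. For \ref{eq:20200406b}, you compute $\Bre(z,y)$ and $\Bre(z,x)$ directly from the conjugate formula in Fact~\ref{prop:bregman_div}\ref{prop:bregman_div_0} and subtract; the paper instead rewrites $\Psi^x_C(\lambda)$ as a dual Bregman distance via Fact~\ref{prop:bregman_div}\ref{prop:bregman_div_vi} and then invokes the three-point identity \ref{prop:bregman_div_iv}. Your version is shorter and avoids both of those facts; the two computations are of course algebraically equivalent, and your well-definedness check ($y\in\inte(\dom\Leg)$ with $\nabla\Leg(y)=\nabla\Leg(x)+A^*\lambda$, which is what makes \ref{prop:bregman_div_0} applicable) is exactly the right thing to verify. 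For \ref{main_rate11}, the difference is more substantive: you compare $\lambda$ against a dual minimizer $\lambda_\star$ and use the KKT identity $\nabla\Leg(x)+A^*\lambda_\star=\nabla\Leg(x_\star)$ together with optimality of $\lambda_\star$, i.e.\ you lean on strong duality (existence of a dual solution) as recorded in \eqref{eq:KKT}. The paper only uses \emph{weak} duality, $\Bre(P_C(x),x)\geq-\Psi^x_C(\lambda)$, combined with Pythagoras' theorem (Fact~\ref{prop:bregman_proj}\ref{prop:bregman_proj_i}); that argument would survive even if no dual minimizer existed. Since the paper's setup explicitly posits $\lambda_\star$ and its KKT characterization, your argument is legitimate within this framework, but it is worth being aware that it imports a hypothesis (attainment of the dual infimum) that the statement of the lemma does not strictly need.
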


\begin{proof}
Since $A z = b$, it follows from \eqref{eq:maindualprob} and
Fact~\ref{prop:bregman_div}\ref{prop:bregman_div_vi} that 
\begin{align}
\nonumber \Psi^x_{\Lc}(\lambda) 
\nonumber &= \Leg^*(\nabla \Leg(x) + A^*\lambda) - \Leg^*(\nabla \Leg(x)) - \langle z, A^*\lambda \rangle \\
\nonumber&= D_{\Leg^*}(\nabla \Leg(x)+ A^* \lambda, \nabla \Leg(x)) + \langle x- z, A^*\lambda \rangle\\
\label{eq:20200406a}&= \Bre(x, \nabla \Leg^*(\nabla \Leg(x)+ A^* \lambda)) + \langle x- z, A^*\lambda \rangle.
\end{align}
Moreover, it follows from Fact~\ref{prop:bregman_div}\ref{prop:bregman_div_iv} that
\begin{equation*}
\Bre(z, \nabla \Leg^*(\nabla \Leg(x) + A^* \lambda)) =
\Bre(z,x) + \Bre(x,\nabla \Leg^*(\nabla \Leg(x) + A^* \lambda))
+ \langle x - z, A^* \lambda \rangle,
\end{equation*}
which together with \eqref{eq:20200406a} yields \ref{eq:20200406b}.	 Next, since $P_C(x) \in C$, weak 
duality yields $\Bre(P_C(x),x) \geq - \Psi^x_{\Lc}(\lambda)$. Then, by \ref{eq:20200406b}, 
$\Bre(P_C(x),x) \geq \Bre(z,x) - \Bre(z, \nabla \Leg^*(\nabla \Leg(x) + A^* \lambda))$. Statement 
\ref{main_rate11} follows by Pythagora's theorem given in 
Proposition~\ref{prop:bregman_proj}\ref{prop:bregman_proj_i}.
\end{proof}

\subsection{D-Fej\'er monotone sequences \cite{BAU2003b,CEN1997}}

Let $C\subset\Hh$ be a nonempty closed convex set. Let $\Leg$ be a Legendre function such that 
$C \cap \inte(\dom \Leg) \neq \varnothing$. A sequence $(x_k)_{k \in \N}$ in $\inte(\dom \Leg)$ is 
\emph{Bregman monotone} or \emph{$D$-Fej\'er monotone} w.r.t.~$C$ if
\begin{equation}
\label{eq:DFejer}
(\forall\, x \in C)(\forall\,k \in \N)\qquad \Bre(x, x_{k+1}) \leq \Bre(x,x_k).
\end{equation}

For $D$-Fej\'er monotone sequences, the following properties are known \cite[Proposition~4.1, Example~4.7, 
and Theorem~4.1(i)]{BAU2003b}. 

\begin{proposition}
\label{prop:DFejer}
Let $(x_k)_{k \in \N}$ be a D-Fejer monotone sequence with respect to $C$. Then
the following hold.
\begin{enumerate}[label={\rm (\roman*)}]
\item\label{prop:DFejer_i} 
$(\forall\, x \in C\cap \dom \Leg)\quad (\Bre(x,x_k))_{k \in \N}$ is decreasing.
\item\label{prop:DFejer_ib} 
$(D_C(x_k))_{k \in \N}$ is decreasing.
\item\label{prop:DFejer_ic} 
$(\forall\, k \in \N)(\forall\, p \in \N)\quad D_C(x_{k+p}) \leq D_C(x_k) - \Bre(P_C (x_k),P_C(x_{k+p}))$.
\item\label{prop:DFejer_ii} 
$(\forall\, x \in C\cap \dom \Leg)(\forall\, x^\prime \in C\cap \dom \Leg) \quad \scalarp{x - x^\prime, \nabla \Leg(x_k)}$ 
is convergent.
\item 
Suppose that $\dom \Leg^*$ is open. Then $(x_k)_{k \in \N}$ is bounded.
\item\label{prop:DFejer_iii} 
If all cluster points of $(x_k)_{k \in \N}$ lie in $C$, then $(x_k)_{k \in\N}$ converges to some point in 
$C \cap \inte(\dom \Leg)$.
\end{enumerate}
\end{proposition}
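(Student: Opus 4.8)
The plan is to settle the first five items by direct computation from the defining inequality \eqref{eq:DFejer} together with the recorded facts on Bregman distances, and to concentrate the real effort on the convergence statement \ref{prop:DFejer_iii}.

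Items \ref{prop:DFejer_i} and \ref{prop:DFejer_ib} are essentially readings of the definition. For \ref{prop:DFejer_i}, restricting the quantifier in \eqref{eq:DFejer} to $x\in C\cap\dom\Leg$ makes every $\Bre(x,x_k)$ finite, so the sequence is nonincreasing in $[0,+\infty[$. For \ref{prop:DFejer_ib} I would use that $x_k\in\inte(\dom\Leg)$ forces $\Bre(z,x_k)=+\infty$ whenever $z\notin\dom\Leg$, whence $D_C(x_k)=\inf_{z\in C\cap\dom\Leg}\Bre(z,x_k)$; taking the infimum over $z\in C\cap\dom\Leg$ in \ref{prop:DFejer_i} gives $D_C(x_{k+1})\le D_C(x_k)$. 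For \ref{prop:DFejer_ii}, expanding $\Bre(x,x_k)$ and $\Bre(x',x_k)$ with Fact~\ref{prop:bregman_div}\ref{prop:bregman_div_0} cancels the common term $\Leg^*(\nabla\Leg(x_k))$ and leaves
\[
\scalarp{x-x',\nabla\Leg(x_k)}=\Leg(x)-\Leg(x')-\big(\Bre(x,x_k)-\Bre(x',x_k)\big),
\]
so convergence follows from \ref{prop:DFejer_i}.

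For \ref{prop:DFejer_ic} I would write $D_C(x_k)=\Bre(P_C(x_k),x_k)$ and combine two facts about the fixed point $P_C(x_k)\in C$: Fejér monotonicity \eqref{eq:DFejer} gives $\Bre(P_C(x_k),x_{k+p})\le\Bre(P_C(x_k),x_k)=D_C(x_k)$, while Fact~\ref{prop:projproperties}\ref{prop:projproperties_iii} (with $y=x_{k+p}$) gives $\Bre(P_C(x_k),P_C(x_{k+p}))+D_C(x_{k+p})\le\Bre(P_C(x_k),x_{k+p})$; chaining the two yields the claim. For the boundedness item I would fix $\bar x\in C\cap\inte(\dom\Leg)$ (nonempty by the standing assumption), note $\Bre(\bar x,x_k)\le\Bre(\bar x,x_0)=:M$ by \ref{prop:DFejer_i}, so the sequence lives in the sublevel set $S=\{y\,\vert\,\Bre(\bar x,y)\le M\}$, which is compact by Fact~\ref{prop:bregman_div}\ref{prop:bregman_div_ix} (here $\dom\Leg^*$ open is used), hence bounded.

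The convergence statement \ref{prop:DFejer_iii} is where the work lies, and I would keep the hypothesis $\dom\Leg^*$ open in force (it is what guarantees cluster points exist). The main obstacle is that \ref{prop:DFejer_ii} only controls $\scalarp{u-v,\nabla\Leg(x_k)}$, and using this at a cluster point $u$ needs $\nabla\Leg(u)$ to be defined, i.e.\ $u\in\inte(\dom\Leg)$, whereas a priori cluster points could lie on $\bd(\dom\Leg)$. I would defeat this using the same set $S$: since $\Bre(\bar x,\cdot)$ is lower semicontinuous (Fact~\ref{prop:bregman_div}\ref{prop:bregman_div_ix}), $S$ is closed, and being a sublevel set of $\Bre(\bar x,\cdot)$ it is contained in $\inte(\dom\Leg)$ and compact; as $(x_k)\subset S$, every cluster point lies in $S\subset\inte(\dom\Leg)$, hence, with the hypothesis, in $C\cap\inte(\dom\Leg)$. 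It then remains to prove uniqueness of the cluster point. Given two cluster points $u,v$ with $x_{k_j}\to u$ and $x_{l_m}\to v$, item \ref{prop:DFejer_ii} makes $(\scalarp{u-v,\nabla\Leg(x_k)})_k$ convergent; passing to the limit along both subsequences, with $\nabla\Leg$ continuous on $\inte(\dom\Leg)$, gives $\scalarp{u-v,\nabla\Leg(u)-\nabla\Leg(v)}=0$. By Fact~\ref{prop:bregman_div}\ref{prop:bregman_div_ii} this equals $\Bre(u,v)+\Bre(v,u)\ge0$, so both vanish and Fact~\ref{prop:bregman_div}\ref{prop:bregman_div_iia} yields $u=v$. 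A bounded sequence with a unique cluster point converges, so $(x_k)$ converges to a point of $C\cap\inte(\dom\Leg)$.
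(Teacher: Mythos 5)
Your argument is correct, and since the paper offers no proof of this proposition (it is stated with a citation to \cite{BAU2003b}), your write-up is in effect a self-contained reconstruction of the standard argument from that reference rather than a divergence from the paper: items \ref{prop:DFejer_i}--\ref{prop:DFejer_ii} and the boundedness item follow exactly as you say from \eqref{eq:DFejer}, Fact~\ref{prop:bregman_div}\ref{prop:bregman_div_0}, Fact~\ref{prop:projproperties}\ref{prop:projproperties_iii} and Fact~\ref{prop:bregman_div}\ref{prop:bregman_div_ix}, and your uniqueness-of-cluster-point argument for \ref{prop:DFejer_iii} (pass to the limit in $\scalarp{u-v,\nabla\Leg(x_k)}$ along both subsequences, then use Fact~\ref{prop:bregman_div}\ref{prop:bregman_div_ii} and \ref{prop:bregman_div_iia}) is exactly the classical mechanism. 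The one point worth making explicit is that you import the hypothesis ``$\dom\Leg^*$ open'' into item \ref{prop:DFejer_iii}, which the statement does not carry; you are right that without it neither boundedness nor the inclusion of cluster points in $\inte(\dom\Leg)$ is guaranteed (both come from the compactness of the sublevel set $\{\Bre(\bar x,\cdot)\le M\}$, i.e.\ from Fact~\ref{prop:bregman_div}\ref{prop:bregman_div_ix}), and since assumption \ref{eq:stand1} is in force everywhere the proposition is invoked in the paper, this is harmless --- but it should be stated as an added hypothesis (or one should note that the conclusion is read under the tacit assumption that cluster points exist and lie in $\inte(\dom\Leg)$, which is how \cite{BAU2003b} arranges matters).
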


Concerning Proposition~\ref{prop:DFejer}\ref{prop:DFejer_iii}, we now give a result ensuring that the cluster points of 
$(x_k)_{k \in \N}$ lie in $C$. In the sequel we will consider the following \emph{sequential consistency assumption}
\cite{BAU2003b}. 
\begin{enumerate}[label={\rm \textbf{SC}},leftmargin=4.7ex]
\item\label{eq:SC} 
For all bounded sequences $(z_k)_{k \in \N}$ and $(y_k)_{k \in \N}$ in $\inte(\dom \Leg)$
\begin{equation*}
\Bre(z_k,y_k) \to 0\ \Rightarrow\ z_k - y_k \to 0.
\end{equation*}
\end{enumerate}

\begin{proposition}
\label{prop:DFejer2}
Suppose that $D_C(x_k) \to 0$ and that \ref{eq:SC} holds. Then $(x_k)_{k \in \N}$ 
converges to some point in $C \cap \inte(\dom \Leg)$.
\end{proposition}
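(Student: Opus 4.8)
The plan is to funnel the whole argument into Proposition~\ref{prop:DFejer}\ref{prop:DFejer_iii}: that item promotes cluster-point information into genuine convergence to a point of $C\cap\inte(\dom\Leg)$, so it suffices to establish that $(x_k)_{k\in\N}$ is bounded (so that cluster points exist at all) and that every cluster point lies in $C$. I would treat the membership in $C$ as the conceptual core, and the two boundedness statements as the technical price to be paid for invoking~\ref{eq:SC}.

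For the membership claim I would use the identity $D_C(x_k)=\Bre(P_C(x_k),x_k)$, immediate from \eqref{eq:Bredist} and \eqref{eq:Breproj}. Let $x^\star$ be a cluster point and $x_{k_n}\to x^\star$. Setting $z_n:=P_C(x_{k_n})$ and $y_n:=x_{k_n}$, the hypothesis gives $\Bre(z_n,y_n)=D_C(x_{k_n})\to0$; once both sequences are known to be bounded, \ref{eq:SC} applies and delivers $z_n-y_n=P_C(x_{k_n})-x_{k_n}\to0$, hence $P_C(x_{k_n})\to x^\star$. As $P_C(x_{k_n})\in C$ and $C$ is closed, $x^\star\in C$. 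I want to emphasize that this is precisely the role of~\ref{eq:SC}: the cluster point $x^\star$ may well lie on $\bd(\dom\Leg)$, where neither $D_C$ nor $P_C$ is guaranteed continuous (compare Fact~\ref{prop:projproperties}\ref{prop:projproperties_iv}), so the tempting shortcut ``$D_C(x^\star)=\lim_n D_C(x_{k_n})=0$, hence $x^\star\in C$ by Fact~\ref{prop:projproperties}\ref{prop:projproperties_0}'' is not licensed; \ref{eq:SC} sidesteps the boundary entirely by converting Bregman-smallness into norm-smallness on bounded sets.

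The main obstacle is thus the two boundedness statements. For the iterates, $D$-Fej\'er monotonicity confines the whole sequence to one sublevel set: fixing $\bar x\in C\cap\inte(\dom\Leg)$, \eqref{eq:DFejer} gives $\Bre(\bar x,x_k)\leq\Bre(\bar x,x_0)$ for all $k$, so $(x_k)_{k\in\N}\subset\{\,y:\Bre(\bar x,y)\leq\Bre(\bar x,x_0)\,\}$, which is bounded because the sublevel sets of $\Bre(\bar x,\cdot)$ are compact (Fact~\ref{prop:bregman_div}\ref{prop:bregman_div_ix}); this is exactly the content of the boundedness clause Proposition~\ref{prop:DFejer}(v). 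For the projected sequence, Proposition~\ref{prop:DFejer}\ref{prop:DFejer_ic} yields $\Bre(P_C(x_0),P_C(x_k))\leq D_C(x_0)-D_C(x_k)\leq D_C(x_0)$, so $(P_C(x_k))_{k\in\N}$ sits in the sublevel set $\{\,y:\Bre(P_C(x_0),y)\leq D_C(x_0)\,\}$ and is bounded by the same fact; the subsequences $(z_n)$ and $(y_n)$ above then inherit boundedness. With both boundedness facts secured, the reasoning of the previous paragraph shows that every cluster point lies in $C$, and Proposition~\ref{prop:DFejer}\ref{prop:DFejer_iii} closes the argument. The one point demanding care---and where the openness of $\dom\Leg^*$ underlying Fact~\ref{prop:bregman_div}\ref{prop:bregman_div_ix} and Proposition~\ref{prop:DFejer}(v) is genuinely used---is precisely this compactness of the Bregman sublevel sets.
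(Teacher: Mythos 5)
Your proposal is correct and follows essentially the same route as the paper's proof: boundedness of $(x_k)_{k\in\N}$ via $D$-Fej\'er monotonicity and the compactness of the sublevel sets of $\Bre(\bar x,\cdot)$ (Fact~\ref{prop:bregman_div}\ref{prop:bregman_div_ix}), boundedness of $(P_C(x_k))_{k\in\N}$ via Proposition~\ref{prop:DFejer}\ref{prop:DFejer_ic} with $k=0$, then \ref{eq:SC} applied to $\Bre(P_C(x_{k_n}),x_{k_n})=D_C(x_{k_n})\to0$ to place every cluster point in the closed set $C$, and finally Proposition~\ref{prop:DFejer}\ref{prop:DFejer_iii} to conclude. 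No gaps.
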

\begin{proof}
Let $x \in C\cap \inte(\dom \Leg)$. It follows from Proposition~\ref{prop:DFejer}\ref{prop:DFejer_i} and
Fact~\ref{prop:bregman_div}\ref{prop:bregman_div_ix} that  $(x_k)_{k \in \N}$ is contained in the compact 
set $\{\Bre(x,\cdot ) \leq \Bre(x, x_0)\} \subset \inte(\dom \Leg)$. Hence the set of cluster points of 
$(x_k)_{k \in \N}$ is nonempty and contained in $\inte(\dom \Leg)$. Moreover, it follows from
Proposition~\ref{prop:DFejer}\ref{prop:DFejer_ic} (with $k=0$) and 
Fact~\ref{prop:bregman_div}\ref{prop:bregman_div_ix} that $(P_C(x_p))_{p \in \N}$ is contained in 
the compact set $\{ \Bre(P_C(x_0),\cdot) \leq D_C(x_0)\}$ and hence it is bounded. Let $x$ be a cluster point of 
$(x_k)_{k \in \N}$ and let $(x_{n_k})_{k \in \N}$ be a subsequence such that $x_{n_k} \to x$. Then, we saw that 
$x \in \inte(\dom \Leg)$. Moreover, $\Bre(P_C(x_{n_k}), x_{k_n}) = D_C(x_{n_k})\to 0$ and hence in virtue of
\ref{eq:SC}, we have that $P_C(x_{n_k}) - x_{n_k} \to 0$. Therefore, $P_C(x_{n_k}) \to x$,  which implies that 
$x \in C$, since $C$ is closed. Thus, we proved that all cluster points of $(x_k)_{k \in \N}$ lie in 
$C \cap \inte(\dom \Leg)$ and therefore, by Proposition~\ref{prop:DFejer}\ref{prop:DFejer_iii} we derive that 
$(x_k)_{k \in \N}$ converges to some point in $C$.
\end{proof}

\section{The method of Bregman projections}
\label{sec:BPM}

In this section we study the main properties of the method of Bregman projections in general convex 
feasibility problems. We first address the well known deterministic case for Problem~\ref{P1} in which 
the various projections are performed in a greedy manner. Then, we introduce the stochastic version of the 
algorithm which is designed for Problem~\ref{P2}. In either case we make the following basic assumption.

\begin{enumerate}[label={\rm \textbf{H0}},leftmargin=4.6ex]
\item\label{eq:stand1}
$C \neq X$, $\Leg$ is a Legendre function, $\dom \Leg^*$ is open, and $\inte(\dom \Leg) \cap C \neq \varnothing$.
\end{enumerate}
Moreover, in this section we will also consider the condition \ref{eq:SC} above.

We note that, referring to Problem~\ref{P1}, when $I$ is finite, say $I = \{1,\dots, n\}$, a standard implementation 
of Algorithm~\ref{algoABP} is that of cyclic projections, in which we have $\xi_k = (k\!\!\mod\! n)+1$ \cite{Alb1997}. 
In this work depending on the problem at hand we consider instead the following set control schemes.

\begin{enumerate}[label={\rm \textbf{C1}},leftmargin=4.6ex]
\item\label{eq:C1}
For every $k \in \N$, $\xi_k \in \argmax_{i \in I} \Bre(P_{C_i}(x_{k}), x_{k})$.
\end{enumerate}

\begin{enumerate}[label={\rm \textbf{C2}},leftmargin=4.6ex]
\item\label{eq:C2}
The $\xi_k$'s are $I$-valued random variables which are independent copies of $\xi$.
\end{enumerate}
\noindent
We call them \emph{greedy} and \emph{random}, respectively. The first one was considered in the pioneering work 
by Bregman \cite{BRE1967} and was called \emph{the most remote set control} in \cite{COM1993}. Instead, 
random set controls appear in \cite{NED2010} in the context of the orthogonal projection method and in \cite{HER2019}
in the study of stochastic fixed point equations. In Section~\ref{sec:adaptive} we will consider another type of random 
set control scheme which we call \emph{adaptive random} which is inspired by the work \cite{GMMN2020}.

\begin{remark}
\label{rmk:greedy-alternate}
According to Algorithm~\ref{algoABP}, for every $k \geq 1$, $x_k \in C_{\xi_{k-1}}$ and hence 
$D_{C_{\xi_{k-1}}}(x_k)=0$. So, if the greedy scheme \ref{eq:C1} is adopted, then 
$D_{C_{\xi_k}}(x_k)=\max_{i \in I} D_{C_i}(x_k)>0$ (otherwise $x_k \in \bigcap_{i\in I} C_i = C$ and 
the algorithm would stop) and hence $\xi_k \neq \xi_{k-1}$. This shows that if $I=\{1,2\}$, \ref{eq:C1} 
reduces to alternating projections.
\end{remark}

\subsection{Convex feasibility problem}
We start by approaching Problem~\ref{P1} via Algorithm~\ref{algoABP} with the greedy set control scheme \ref{eq:C1}.

\begin{proposition}
\label{prop:ABP}
Referring to Problem~\ref{P1}, suppose that \ref{eq:stand1} holds and let $(x_k)_{k \in \N}$ be generated 
by Algorithm~\ref{algoABP}. Then, the following hold.

\begin{enumerate}[label={\rm (\roman*)}]
\item\label{prop:ABP_i} 
$(\forall\, k \in \N) (\forall\, x \in C)\ \Bre(x,x_{k+1}) \leq \Bre(x,x_{k}) - \Bre(x_{k+1},x_{k})$.
Hence $(x_k)_{k \in \N}$ is D-Fejer monotone with respect to $C$.
\item\label{prop:ABP_ii} 
$\displaystyle\sum_{k=0}^{+\infty} \Bre(x_{k+1},x_{k}) <+\infty$\\[-1ex]
\end{enumerate}
\end{proposition}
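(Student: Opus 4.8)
The plan is to derive both statements directly from the one-step Bregman projection inequality, noting at the outset that the greedy rule \ref{eq:C1} is actually irrelevant here: the argument works for \emph{any} set control, since every target point $x \in C$ belongs to each set $C_i$, and in particular to $C_{\xi_k}$.

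For part \ref{prop:ABP_i}, I would fix $k \in \N$ and $x \in C = \bigcap_{i \in I} C_i$, so that $x \in C_{\xi_k}$. Applying Fact~\ref{prop:projproperties}\ref{prop:projproperties_iii} to the closed convex set $C_{\xi_k}$ at the base point $x_k \in \inte(\dom\Leg)$, with the feasible point $x$, gives
\begin{equation*}
\Bre\big(x, P_{C_{\xi_k}}(x_k)\big) \leq \Bre(x, x_k) - \Bre\big(P_{C_{\xi_k}}(x_k), x_k\big).
\end{equation*}
Since the update of Algorithm~\ref{algoABP} reads $x_{k+1} = P_{C_{\xi_k}}(x_k)$, this is exactly $\Bre(x, x_{k+1}) \leq \Bre(x, x_k) - \Bre(x_{k+1}, x_k)$, the claimed inequality. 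As $\Bre$ is nonnegative, discarding the term $\Bre(x_{k+1}, x_k) \geq 0$ yields $\Bre(x, x_{k+1}) \leq \Bre(x, x_k)$, which is precisely the $D$-Fej\'er monotonicity condition \eqref{eq:DFejer}.

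For part \ref{prop:ABP_ii}, I would telescope. Assumption \ref{eq:stand1} guarantees $C \cap \inte(\dom\Leg) \neq \varnothing$, so I can fix a point $\bar x$ in it; then $\Bre(\bar x, x_0) < +\infty$, because $\bar x \in \dom\Leg$ and $x_0 \in \inte(\dom\Leg)$. Rewriting part \ref{prop:ABP_i} with $x = \bar x$ as $\Bre(x_{k+1}, x_k) \leq \Bre(\bar x, x_k) - \Bre(\bar x, x_{k+1})$ and summing over $k = 0, \dots, N$, the right-hand side collapses to $\Bre(\bar x, x_0) - \Bre(\bar x, x_{N+1}) \leq \Bre(\bar x, x_0)$, again by nonnegativity of $\Bre$. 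Letting $N \to +\infty$ delivers $\sum_{k=0}^{+\infty} \Bre(x_{k+1}, x_k) \leq \Bre(\bar x, x_0) < +\infty$.

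There is no genuine obstacle; the single point requiring care is the finiteness of the telescoping bound, which is exactly what the nonemptiness of $C \cap \inte(\dom\Leg)$ in \ref{eq:stand1} supplies by furnishing a point $\bar x$ at finite Bregman distance from $x_0$. It is worth emphasizing that the greedy choice \ref{eq:C1} plays no role in these monotonicity and summability facts—it becomes essential only for the quantitative convergence rates established later.
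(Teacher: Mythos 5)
Your proof is correct and follows essentially the same route as the paper: part \ref{prop:ABP_i} is the one-step inequality \eqref{projC} (equivalently Fact~\ref{prop:projproperties}\ref{prop:projproperties_iii}) applied to $C_{\xi_k}$ using $C \subset C_{\xi_k}$, and part \ref{prop:ABP_ii} is the same telescoping against a fixed point of $C \cap \dom\Leg$. Your observation that the greedy control plays no role here is also consistent with the paper, whose Proposition~\ref{prop:ABP} indeed assumes only that the iterates are generated by Algorithm~\ref{algoABP} with an arbitrary control.
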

 
\begin{proof}
\ref{prop:ABP_i}:
Let $k \in \N$. Since $x_{k+1} = P_{C_{\xi_k}}(x_{k})$, it follows from \eqref{projC} that
\begin{equation*}
(\forall\, x \in C_{\xi_k})\qquad
\Bre(x,x_{k}) \geq \Bre(x, x_{k+1}) + \Bre(x_{k+1},x_{k}).
\end{equation*}
Since $C \subset C_{\xi_k}$, the statement follows.

\ref{prop:ABP_ii}:
Let $x \in C \cap \dom \Leg$. Then, by \ref{prop:ABP_i} we derive that 
$\Bre(x_{k+1},x_{k}) \leq \Bre(x,x_{k}) - \Bre(x,x_{k+1})$ and hence 
$\sum_{k=0}^{+\infty}\Bre(x_{k+1},x_{k}) \leq \Bre(x,x_{0})<+\infty$.
\end{proof}

The following is essentially Theorem~2 in \cite{BRE1967}.

\begin{theorem}[Greedy control scheme]
\label{thm:ABP2}
Referring to Problem~\ref{P1}, suppose that \ref{eq:stand1} and \ref{eq:SC} hold, and that $(x_k)_{k \in \N}$ is 
generated by Algorithm~\ref{algoABP} with the greedy set control scheme \ref{eq:C1}. Then $D_C(x_k)\!\to\!0$ and 
$x_k \to \hat{x} \in C\cap \inte(\dom\Leg)$.
\end{theorem}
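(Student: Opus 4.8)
The plan is to first establish that $D_C(x_k)\to 0$ and then invoke the machinery of $D$-Fej\'er monotone sequences to obtain convergence. By Proposition~\ref{prop:ABP}\ref{prop:ABP_i} the sequence $(x_k)_{k\in\N}$ is $D$-Fej\'er monotone w.r.t.\ $C$; fixing $x\in C\cap\inte(\dom\Leg)$, Proposition~\ref{prop:DFejer}\ref{prop:DFejer_i} shows it stays in the sublevel set $\{\Bre(x,\cdot)\le\Bre(x,x_0)\}$, which under \ref{eq:stand1} is compact and contained in $\inte(\dom\Leg)$ by Fact~\ref{prop:bregman_div}\ref{prop:bregman_div_ix}. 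In particular $(x_k)_{k\in\N}$ is bounded and all its cluster points lie in $\inte(\dom\Leg)$. Moreover, from Proposition~\ref{prop:ABP}\ref{prop:ABP_ii} the summability of $\sum_k\Bre(x_{k+1},x_k)$ forces $\Bre(x_{k+1},x_k)\to 0$.

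The decisive step is to translate this into information about the individual sets. Since the greedy rule \ref{eq:C1} selects $\xi_k\in\argmax_{i\in I}\Bre(P_{C_i}(x_k),x_k)$ and $\Bre(P_{C_i}(x_k),x_k)=D_{C_i}(x_k)$ by definition of the Bregman projector, we obtain $\Bre(x_{k+1},x_k)=\max_{i\in I}D_{C_i}(x_k)$. Hence $\max_{i\in I}D_{C_i}(x_k)\to 0$, and so $D_{C_i}(x_k)\to 0$ for every $i\in I$ simultaneously.

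Next I would run a cluster-point argument to show that every cluster point of $(x_k)_{k\in\N}$ lies in $C=\bigcap_{i\in I}C_i$. Fix a cluster point $\bar x$ and a subsequence $x_{n_k}\to\bar x\in\inte(\dom\Leg)$. For each fixed $i$, continuity of $P_{C_i}$ on $\inte(\dom\Leg)$ (Fact~\ref{prop:projproperties}\ref{prop:projproperties_iv}) gives $P_{C_i}(x_{n_k})\to P_{C_i}(\bar x)$, so both $(P_{C_i}(x_{n_k}))_k$ and $(x_{n_k})_k$ are bounded; since $\Bre(P_{C_i}(x_{n_k}),x_{n_k})=D_{C_i}(x_{n_k})\to 0$, the sequential consistency assumption \ref{eq:SC} yields $P_{C_i}(x_{n_k})-x_{n_k}\to 0$, whence $P_{C_i}(\bar x)=\bar x$, i.e.\ $\bar x\in C_i$ by Fact~\ref{prop:projproperties}\ref{prop:projproperties_0}. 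As $i\in I$ was arbitrary, $\bar x\in C$.

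Finally, having shown that all cluster points lie in $C$, Proposition~\ref{prop:DFejer}\ref{prop:DFejer_iii} delivers $x_k\to\hat x\in C\cap\inte(\dom\Leg)$; then continuity of $D_C$ (Fact~\ref{prop:projproperties}\ref{prop:projproperties_iv}) together with $D_C(\hat x)=0$ (Fact~\ref{prop:projproperties}\ref{prop:projproperties_0}) gives $D_C(x_k)\to D_C(\hat x)=0$. The main obstacle is precisely the passage from $\max_{i\in I}D_{C_i}(x_k)\to 0$ (small distance to each \emph{individual} set) to the conclusion that cluster points lie in the \emph{intersection} $C$: this cannot be read off directly from $D_C$, since the distance to an intersection need not be controlled by the distances to the individual sets, and it is exactly here that continuity of the Bregman projectors and the sequential consistency \ref{eq:SC} enter the argument.
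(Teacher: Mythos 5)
Your proof is correct and self-contained, and it follows essentially the route of the result the paper defers to (the paper gives no proof here, citing Theorem~2 of Bregman's 1967 paper): $D$-Fej\'er monotonicity and summability of $\Bre(x_{k+1},x_k)$ from Proposition~\ref{prop:ABP}, the greedy identification $\Bre(x_{k+1},x_k)=\sup_{i\in I}D_{C_i}(x_k)$, and a cluster-point argument combining \ref{eq:SC} with continuity of the Bregman projectors before invoking Proposition~\ref{prop:DFejer}\ref{prop:DFejer_iii}. Your closing observation---that $\sup_{i\in I}D_{C_i}(x_k)\to 0$ does not by itself control $D_C(x_k)$, so the detour through cluster points of the individual sets is essential---is exactly the right point, and your argument closely parallels the one the authors do write out in Proposition~\ref{prop:DFejer2}.
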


\subsection{Stochastic convex feasibility problems}

We now consider Problem~\ref{P2}. We denote by $\mu$ the distribution of the random variable $\xi$ and by 
$(\Omega, \mathfrak{A}, \PP)$ the underlying probability space. We recall that the set-valued mapping 
$i\in I \to C_i \subset \Hh$ (with closed values) is measurable if for all Borel set $Z \subset \Hh$, 
$\{ i \in I \,\vert\, C_i\cap Z \neq \varnothing\} \in \mathcal{I}$. Therefore for all $x \in \Hh$, $\{i \in I \,\vert\, x \in C_i\}$ 
is measurable and thanks to \cite[Lemma III.39]{CAS1977} (with $\varphi(i,z) = - \Bre(z,x)$ and $\Sigma(i) = C_i$) 
we have that, for every $x \in \inte(\dom \Leg)$, the function $i \in I \to D_{C_i}(x) \in \R$ is also measurable.
We will study Algorithm~\ref{algoABP} adopting the random set control scheme \ref{eq:C2}, meaning that the 
$\xi_k$'s are independent copies of the random variable $\xi$. Note that now $x_k$ is a random variable and 
more precisely $x_k = x_k (\xi_0, \dots, \xi_{k-1})$, so that $x_k$ and $\xi_k$ are independent random variables.
We denote by $\mathfrak{X}_k$ the $\sigma$-algebra generated by $x_0, \dots x_{k}$. Finally, we set
\begin{align}
\label{eq:20200611b}
\overline{D}_{\hatC}\colon \inte(\dom \Leg) &\to \R\colon x \mapsto \EE[D_{C_\xi}(x)].
\end{align}

\begin{remark}
\normalfont
Since $\{ i \in I \,\vert\, x \in C_i \} \in \mathcal{I}$ and $\xi$ is mesurable, we have 
$\{ x \in C_\xi\} = \xi^{-1}(\{ i \in I \,\vert\, x \in C_i \}) \in \mathfrak{A}$. Moreover, for every $x \in \inte(\dom\Leg)$, 
since $i \in I \to D_{C_i}(x)$ is measurable, $D_{C_\xi}(x)$ is measurable too. Note that
\begin{align}
\label{eq:20200809a}
\PP(\{x \in C_\xi\}) &= \PP(\xi^{-1}(\{i \in I \,\vert\, x \in C_i\})) = \mu(\{i \in I \,\vert\, x \in C_i\})\\
\EE[D_{C_\xi}(x)] &= \int_{\Omega} D_{C_{\xi(\omega)}}(x) \PP(d \omega) 
= \int_{\mathcal{I}} D_{C_i}(x) \mu(d\,\!i).
\end{align}
Therefore, the definitions of $C$, in Problem~\ref{P2}, and \eqref{eq:20200611b} depend only on the 
distribution $\mu$ of $\xi$. Finally note that by \eqref{eq:20200809a} one derives that
\begin{equation}
\label{eq:20200810a}
\PP(\{x \in C_\xi\}) = 1\ \Leftrightarrow\ \mu(\{i \in I \,\vert\, x \in C_i\})=1.
\end{equation}
\end{remark}
\vspace{-1ex}
\begin{proposition}
\label{prop:stochasticintersection2}
Referring to Problem~\ref{P2} and assuming \ref{eq:stand1}, 
the following hold.
\begin{enumerate}[label={\rm (\roman*)}]
\item\label{prop:stochasticintersection2_0} 
The function $\overline{D}_{\hatC}\colon \inte(\dom \Leg) \to \R$ is lower semicontinuous.
\item\label{prop:stochasticintersection2_i} 
$(\forall\, x \in \inte(\dom \Leg))\ x \in \hatC\ \Leftrightarrow\ \overline{D}_{\hatC}(x)=0$.
\item\label{prop:stochasticintersection2_ii} 
There exists a $\mu$-negligible set $J\subset I$ such that, $\hatC = \bigcap_{j \in I\setminus J} C_{j}$. 
Moreover, if $(\xi_k)_{k \in \N}$ is a sequence of $I$-valued random variables each one having the same 
distribution of $\xi$, then there exists a $\PP$-negligible set $N$ such that, for every $k \in \N$, 
$\hatC = \bigcap_{\omega \in \Omega\setminus N} C_{\xi_k(\omega)}$.
\end{enumerate}
\end{proposition}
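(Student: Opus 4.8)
The plan is to reduce everything to the integral representation $\overline{D}_{\hatC}(x) = \int_I D_{C_i}(x)\,\mu(di)$ recorded in \eqref{eq:20200809a} and to the measure-theoretic characterization of $\hatC$ in \eqref{eq:20200810a}. For (i), I would note that for each fixed $i$ the map $x \mapsto D_{C_i}(x)$ is continuous (by Fact~\ref{prop:projproperties}\ref{prop:projproperties_iv}, applicable since $\dom\Leg^*$ is open under \ref{eq:stand1}) and nonnegative, and that $i \mapsto D_{C_i}(x)$ is measurable. Given $x_n \to x$ in $\inte(\dom\Leg)$, Fatou's lemma applied to the nonnegative integrands yields $\liminf_n \overline{D}_{\hatC}(x_n) \geq \int_I \liminf_n D_{C_i}(x_n)\,\mu(di) = \int_I D_{C_i}(x)\,\mu(di) = \overline{D}_{\hatC}(x)$, which is lower semicontinuity. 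For (ii), fix $x \in \inte(\dom\Leg)$; by Fact~\ref{prop:projproperties}\ref{prop:projproperties_0} one has $\{i : x \in C_i\} = \{i : D_{C_i}(x) = 0\}$. Since the integrand is nonnegative, $\overline{D}_{\hatC}(x)=0$ iff $D_{C_i}(x)=0$ for $\mu$-a.e.\ $i$, i.e.\ iff $\mu(\{i : x \in C_i\}) = 1$, which by \eqref{eq:20200810a} is exactly $x \in \hatC$.

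For the first assertion in (iii), I would first record that $\hatC$ is closed and convex: convexity holds because two full-measure index sets meet in a full-measure set on which both points lie in $C_i$, and closedness holds because, for $x_n \to x$ with $x_n \in \hatC$, the countable intersection $\bigcap_n \{i : x_n \in C_i\}$ still has full measure and consists of indices for which $x = \lim_n x_n \in C_i$, by closedness of $C_i$. Then, using separability of the Euclidean space $X$, I pick a countable dense set $\{d_m\}_{m} \subseteq \hatC$ and set $J = \bigcup_m \{i : d_m \notin C_i\}$, a countable union of $\mu$-null sets, hence $\mu$-null. For $i \notin J$ every $d_m$ lies in the closed set $C_i$, so $\hatC = \overline{\{d_m\}_m} \subseteq C_i$, giving $\hatC \subseteq \bigcap_{i \notin J} C_i$. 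The reverse inclusion is immediate: any $x \in \bigcap_{i \notin J} C_i$ lies in $C_i$ for all $i$ outside the $\mu$-null set $J$, hence for $\mu$-a.e.\ $i$, so $x \in \hatC$ by \eqref{eq:20200810a}.

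For the second assertion in (iii), let $J$ be as above and set $N_k = \xi_k^{-1}(J)$ and $N = \bigcup_{k} N_k$. Since each $\xi_k$ has distribution $\mu$, $\PP(N_k) = \mu(J) = 0$, hence $\PP(N)=0$. Fix $k$. If $\omega \notin N$ then $\xi_k(\omega) \notin J$, so $\hatC \subseteq C_{\xi_k(\omega)}$, whence $\hatC \subseteq \bigcap_{\omega \notin N} C_{\xi_k(\omega)}$. Conversely, for $x \in \bigcap_{\omega \notin N} C_{\xi_k(\omega)}$ the set $\{\omega : x \notin C_{\xi_k(\omega)}\}$ is contained in $N$, so pushing forward by $\xi_k$ gives $\mu(\{i : x \notin C_i\}) = \PP(\{\omega : x \notin C_{\xi_k(\omega)}\}) \leq \PP(N) = 0$; thus $x \in C_i$ for $\mu$-a.e.\ $i$, i.e.\ $x \in \hatC$. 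This gives the claimed equality for every $k$ with the single null set $N$.

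I anticipate the main obstacle to be the first part of (iii): the passage from the pointwise statement \emph{``each $x \in \hatC$ lies in $C_i$ for $\mu$-a.e.\ $i$''} to the uniform statement \emph{``for $\mu$-a.e.\ $i$, every point of $\hatC$ lies in $C_i$''}. Swapping these quantifiers is precisely what produces the essential intersection $\hatC = \bigcap_{i \notin J} C_i$ with a \emph{single} exceptional null set, and it is exactly here that separability of $X$ together with the closedness of $\hatC$ are indispensable; the remaining steps are then routine.
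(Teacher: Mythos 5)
Your proposal is correct and follows essentially the same route as the paper: Fatou's lemma plus continuity of $D_{C_i}$ for (i), positivity of the integrand for (ii), and a countable dense subset of $\hatC$ with a countable union of null sets $J_x$ for (iii), including the same pullback argument via $\xi_k^{-1}(J)$. The only nuance is in (i): continuity of $x\mapsto D_{C_i}(x)$ via Fact~\ref{prop:projproperties}\ref{prop:projproperties_iv} requires $C_i\cap\inte(\dom\Leg)\neq\varnothing$, which holds only for $\mu$-a.e.\ $i$ (those $i$ containing a fixed $z\in C\cap\inte(\dom\Leg)$), but this is exactly what the paper uses and is all that Fatou needs.
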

 
\begin{proof}
\ref{prop:stochasticintersection2_0}:
Let $(x_k)_{k \in \N}$ be a sequence in $\inte(\dom\Leg)$ and $x \in \inte(\dom \Leg)$ be such that $x_k \to x$ 
and let $z \in C\cap\inte(\dom\Leg)$. Then, there exists a $\PP$-negligible set $N\subset \Omega$ such that, 
for every $\omega \in \Omega\setminus N$, $z \in C_{\xi(\omega)}$. It follows from 
Fact~\ref{prop:projproperties}\ref{prop:projproperties_iv} that, for every $\omega \in \Omega\setminus N$,
$D_{C_{\xi(\omega)}}\colon \inte(\dom\Leg)\to \R$ is continuous and hence 
$D_{C_{\xi(\omega)}}(x_k)\to D_{C_{\xi(\omega)}}(x)$. Therefore, $D_{C_{\xi}}(x_k) \to D_{C_{\xi}}(x)$ almost 
surely and hence by Fatou's lemma we have $\EE[D_{C_{\xi}}(x)] \leq \liminf_{k\to+\infty} \EE[D_{C_{\xi}}(x_k)]$.

\ref{prop:stochasticintersection2_i}:
Let $x \in \inte(\dom\Leg)$. Since the integrand in $\overline{D}_{\hatC}(x)$ is positive, it follows from 
Fact~\ref{prop:projproperties}\ref{prop:projproperties_0} that 
$\overline{D}_{\hatC}(x)=0 \ \Leftrightarrow\ D_{C_\xi}(x)=0\ \PP\text{-a.s.}$ $\Leftrightarrow\ x \in C_\xi\ \PP\text{-a.s.}$

\ref{prop:stochasticintersection2_ii}: Let $Q$ be a countable dense subset of $\hatC$ and let $x \in Q$. Then, 
since $x \in \hatC$, it follows from \eqref{eq:20200810a} that there exists a $\mu$-negligible subset 
$J_{x} \subset I$ such that $x \in C_{i}$ for every $i \in I\!\setminus\! J_{x}$. Set $J = \bigcup_{x \in Q} J_x$.
Then $J$ is $\mu$-negligible and, for every $i \in I\!\setminus\! J$, $x\in C_{i}$; hence, 
$x \in \bigcap_{i \in I \setminus J} C_{i}$. We then proved that $Q \subset \bigcap_{i \in I \setminus J} C_{i}$.
Since this latter intersection is closed we have $\hatC = \cl(Q) \subset \bigcap_{i \in I \setminus J} C_{i}$. 
On the other hand, if $x \in \bigcap_{i \in I \setminus J} C_{i}$, then, we have $x \in \hatC$, again by 
\eqref{eq:20200810a}. Suppose now that $(\xi_k)_{k \in \N}$ is a sequence of $I$-valued random variables 
each one distributed according to $\mu$. Set $N = \bigcup_{k \in \N} \xi_k^{-1}(J)$ and let $k \in \N$. 
We first prove that $C \subset \bigcap_{\omega \in \Omega \setminus N} C_{\xi_k(\omega)}$. Let $x \in C$. 
Then, since $\PP(\xi_k^{-1}(J))=\mu(J)=0$, $N$ is $\PP$-negligible and for every $\omega \in \Omega\setminus N$ 
we have $\xi_k(\omega) \in I\setminus J$ and hence $x \in C_{\xi_k(\omega)}$. 
Thus, $x \in \bigcap_{\omega \in \Omega \setminus N} C_{\xi_k(\omega)}$. The other inclusion follows from 
the true definition of $C$ in Problem~\ref{P2}, noting that if 
$x \in \bigcap_{\omega \in \Omega \setminus N} C_{\xi_k(\omega)}$, then 
$\PP(\{x \in C_\xi\}) = \PP(\{x \in C_{\xi_k}\})=1$.
\end{proof}

\begin{remark}
\normalfont
Proposition~\ref{prop:stochasticintersection2}\ref{prop:stochasticintersection2_ii}
is essentially  \cite[Lemma~2.4 and Corollary~2.6]{HER2019}.
\end{remark}

\begin{theorem}[Random control scheme]
\label{thm:conv2}
Referring to Problem~\ref{P2}, suppose that assumption \ref{eq:stand1} holds. Let the function 
$\overline{D}_C\colon \inte(\dom \Leg) \to \R$ be defined as in \eqref{eq:20200611b}. If  $(x_k)_{k \in \N}$ is 
generated by Algorithm~\ref{algoABP} with the random set control scheme \ref{eq:C2}, then the following hold.
\begin{enumerate}[label={\rm (\roman*)}]
\item\label{thm:conv2_iii} 
The sequence $(x_k)_{k \in \N}$ is  $D$-Fej\'er monotone w.r.t.~$C$ $\PP$-a.s.~and contained in a compact 
subset of $X$ $\PP$-a.s. 
\item\label{thm:conv2_i} 
$(D_{\hatC}(x_k))_{k \in \N}$ is $\PP$-a.s.~decreasing and $\overline{D}_{\hatC}(x_k) \to 0$ $\PP$-a.s.
\item\label{thm:conv2_iv} 
There exists an $X$-valued random variable $\hat{x}$ and a subsequence $(x_{n_k})_{k \in \N}$ such that
$\hat{x} \in \hatC\cap \inte(\dom\Leg)$ $\PP$-a.s.~and $x_{n_k} \to \hat{x}$ in distribution.
\item\label{thm:conv2_ii} 
$D_{\hatC}(x_k) \to 0$ $\PP$-a.s.~and $\EE[D_{\hatC}(x_k)] \to 0$.
\item\label{thm:conv2_v} 
If \ref{eq:SC} holds, then there exists an $X$-valued random variable $\hat x$ such that 
$\hat x\in \inte(\dom\Leg)\cap C$ $\PP$-a.s. and $x_k \to \hat x$ $\PP$-a.s.~and $\EE[\norm{x_k- \hat{x}}^2] \to 0$.
\end{enumerate}
\end{theorem}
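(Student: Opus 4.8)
The plan is to prove the five items in the stated order, each feeding the next. \emph{Item (i).} By Proposition~\ref{prop:stochasticintersection2}\ref{prop:stochasticintersection2_ii} there is a $\PP$-negligible set $N$ with $C \subset C_{\xi_k(\omega)}$ for every $k$ and every $\omega \notin N$. Fixing such an $\omega$ and using $x_{k+1}=P_{C_{\xi_k}}(x_k)$, I would apply Fact~\ref{prop:projproperties}\ref{prop:projproperties_iii} with the set $C_{\xi_k}$ to get, for each $x \in C \subset C_{\xi_k}$,
\[
\Bre(x,x_{k+1}) \leq \Bre(x,x_k) - \Bre(x_{k+1},x_k) \leq \Bre(x,x_k),
\]
which is $D$-Fej\'er monotonicity $\PP$-a.s. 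Choosing a fixed $x^\ast \in C \cap \inte(\dom\Leg)$ (nonempty by \ref{eq:stand1}), the path then lies in the sublevel set $\{\Bre(x^\ast,\cdot) \leq \Bre(x^\ast,x_0)\}$, which is compact and contained in $\inte(\dom\Leg)$ since $\dom\Leg^\ast$ is open (Fact~\ref{prop:bregman_div}\ref{prop:bregman_div_ix}); this set is deterministic, yielding the a.s.\ compact containment.

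\emph{Item (ii).} The decrease of $(D_C(x_k))$ is Proposition~\ref{prop:DFejer}\ref{prop:DFejer_ib} applied pathwise. For $\overline{D}_C(x_k)\to 0$, observe that $\Bre(x_{k+1},x_k) = D_{C_{\xi_k}}(x_k)$, so the inequality above reads $\Bre(x^\ast,x_{k+1}) \leq \Bre(x^\ast,x_k) - D_{C_{\xi_k}}(x_k)$. The key point is that $x_k$ is a function of $(\xi_0,\dots,\xi_{k-1})$ and hence independent of $\xi_k$; combining Fact~\ref{f:20190112d} with the tower property over $\mathfrak{X}_k \subset \sigma(\xi_0,\dots,\xi_{k-1})$ gives $\EE[D_{C_{\xi_k}}(x_k)\mid \mathfrak{X}_k] = \overline{D}_C(x_k)$, so that
\[
\EE[\Bre(x^\ast,x_{k+1})\mid \mathfrak{X}_k] \leq \Bre(x^\ast,x_k) - \overline{D}_C(x_k).
\]
Thus $M_k := \Bre(x^\ast,x_k) + \sum_{j<k}\overline{D}_C(x_j)$ is a nonnegative supermartingale; its a.s.\ convergence forces $\sum_k \overline{D}_C(x_k) < +\infty$ $\PP$-a.s., hence $\overline{D}_C(x_k)\to 0$ $\PP$-a.s., while taking expectations and telescoping gives $\sum_k \EE[\overline{D}_C(x_k)] \leq \Bre(x^\ast,x_0) < +\infty$, so $\EE[\overline{D}_C(x_k)]\to 0$ as well.

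\emph{Items (iii) and (iv).} Here lies the crux: the only pointwise comparison available, $\overline{D}_C \leq D_C$, points the wrong way, so I cannot pass directly from $\overline{D}_C(x_k)\to 0$ to $D_C(x_k)\to 0$; the bridge must go through a distributional limit. Since the laws of $(x_k)$ are supported on the fixed compact set of item~(i) they are tight, and Prokhorov's theorem yields a subsequence $x_{n_k}\to\hat x$ in distribution, with $\hat x$ valued in that compact set, hence in $\inte(\dom\Leg)$, $\PP$-a.s. As $\overline{D}_C$ is lower semicontinuous (Proposition~\ref{prop:stochasticintersection2}\ref{prop:stochasticintersection2_0}) and nonnegative, the portmanteau theorem and $\EE[\overline{D}_C(x_{n_k})]\to 0$ give $\EE[\overline{D}_C(\hat x)] \leq \liminf_k \EE[\overline{D}_C(x_{n_k})] = 0$, whence $\overline{D}_C(\hat x)=0$ and $\hat x \in C$ $\PP$-a.s.\ by Proposition~\ref{prop:stochasticintersection2}\ref{prop:stochasticintersection2_i}; this is item~(iii). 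For item~(iv), $D_C$ is continuous (Fact~\ref{prop:projproperties}\ref{prop:projproperties_iv}), so $D_C(x_{n_k}) \to D_C(\hat x) = 0$ in distribution (Fact~\ref{fact_convRV}), i.e.\ in probability. But $(D_C(x_k))$ is $\PP$-a.s.\ decreasing and therefore converges a.s.\ to some $\ell \geq 0$; the a.s.\ limit $\ell$ must coincide with the in-probability limit $0$, so $\ell = 0$ and $D_C(x_k)\to 0$ $\PP$-a.s. Since the path stays in a compact set and $D_C$ is continuous, $D_C(x_k)$ is uniformly bounded, and bounded convergence gives $\EE[D_C(x_k)]\to 0$.

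\emph{Item (v).} Finally, assuming \ref{eq:SC}, fix $\omega$ outside the null sets above: the path $(x_k(\omega))$ is $D$-Fej\'er monotone with $D_C(x_k(\omega))\to 0$, so Proposition~\ref{prop:DFejer2} produces a limit $\hat x(\omega) \in C \cap \inte(\dom\Leg)$. Defining $\hat x$ as this a.s.\ pointwise limit makes it measurable, with $\hat x \in C\cap\inte(\dom\Leg)$ $\PP$-a.s.\ and $x_k \to \hat x$ $\PP$-a.s. Because all $x_k$ and $\hat x$ lie in the fixed compact set of item~(i), $\sup_k \esssup\norm{x_k} < +\infty$, and Fact~\ref{fact_convRV} upgrades the a.s.\ convergence to $\EE[\norm{x_k-\hat x}^2]\to 0$. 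I expect the supermartingale estimate of item~(ii) (in particular the conditional-expectation identity via independence) and the compactness–plus–distributional-limit bridge of items~(iii)–(iv) to be the two genuinely non-routine steps.
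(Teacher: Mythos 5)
Your proof is correct and follows essentially the same route as the paper: almost-sure $D$-Fej\'er monotonicity via the negligible set from Proposition~\ref{prop:stochasticintersection2}\ref{prop:stochasticintersection2_ii}, the conditional-expectation identity $\EE[D_{C_{\xi_k}}(x_k)\mid\mathfrak{X}_k]=\overline{D}_C(x_k)$ from independence, Prokhorov and Portmanteau to produce a distributional limit in $C$, continuity of $D_C$ plus monotonicity to upgrade to $D_C(x_k)\to 0$ a.s., and Proposition~\ref{prop:DFejer2} for the last item. The only (immaterial) variations are that you invoke the nonnegative-supermartingale convergence theorem where the paper simply sums expectations and applies Tonelli, and you locate $\hat x$ in $\inte(\dom\Leg)$ via the closed compact support rather than via lower semicontinuity of $\Bre(x,\cdot)$.
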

 
\begin{proof}
\ref{thm:conv2_iii}:
Proposition \ref{prop:stochasticintersection2}\ref{prop:stochasticintersection2_ii}  yields that there exists a 
$\PP$-negligible set $N$ such that $\hatC = \bigcap_{\omega \in \Omega\setminus N} C_{\xi_k(\omega)}$ 
for every $k\in\N$. Let $\omega \in \Omega\setminus N$ and let $x \in \hatC$. Then, for every $k\in\N$,  
$x \in C_{\xi_k(\omega)}$, and, hence, since $x_{k+1}(\omega) = \Pc_{C_{\xi_k(\omega)}}(x_{k}(\omega))$, 
it follows from \eqref{projC} that, for every $k \in \N$,
\begin{equation}
\label{eq:20200526a}
\Bre(x,x_{k+1}(\omega)) \leq \Bre(x,x_{k}(\omega)) - \Bre(x_{k+1}(\omega),x_{k}(\omega)).
\end{equation}
This shows that $(x_k)_{k\in\N}$ is $D$-Fej\'er monotone w.r.t $C$ $\PP$-a.s.~and hence, 
Proposition~\ref{prop:DFejer} yields that $(\Bre(x,x_k))_{k \in \N}$ is $\PP$-a.s.~decreasing. So, if we pick 
$x \in C \cap \inte(\dom\Leg)$, it follows from Fact~\ref{prop:bregman_div}\ref{prop:bregman_div_ix} that
$(x_k)_{k \in \N}$ is $\PP$-a.s.~contained in the compact sublevel set 
$\{\Bre(x, \cdot) \leq \Bre(x, x_0)\} \subset \inte(\dom\Leg)$.

\ref{thm:conv2_i}-\ref{thm:conv2_iv}: 
It follows from \ref{thm:conv2_iii} and Proposition \ref{prop:DFejer}\ref{prop:DFejer_ib} that $(D_C(x_k))_{k\in\N}$ is 
$\PP$-a.s.~decreasing. Also, \ref{thm:conv2_iii} yields that the sequence $(x_k)_{k \in \N}$ is uniformly essentially
bounded, i.e., $\sup_{k \in \N} \esssup \norm{x_k}<+\infty$. So, Prokhorov's theorem ensures that
there exists a subsequence $(x_{n_k})_{k \in \N}$ converging in distribution to some random vector $\hat{x}$.
Let $x \in C\cap \inte(\dom\Leg)$. Since, in virtue of Fact~\ref{prop:bregman_div}\ref{prop:bregman_div_ix}, 
$\Bre(x,\cdot)$ is positive and lower semicontinuous on $X$, Portmanteau theorem 
(see e.g., \cite[Theorem~2.8.1]{ASH2000}) yields that
\begin{equation}
\EE[\Bre(x,\hat{x})]\leq \liminf_{k\to \infty} \EE[\Bre(x,x_{n_k})] \leq \Bre(x,x_0)<+\infty.
\end{equation}
Thus, $\Bre(x,\hat{x})<+\infty$ $\PP$-a.s.~and hence $\hat{x} \in \inte(\dom\Leg)$ $\PP$-a.s. Now, taking the 
conditional expectation in \eqref{eq:20200526a} and using Fact~\ref{f:20190112d}, we get
\begin{align*}
\EE[ \Bre(x,x_{k+1}) \,\vert\, \mathfrak{X}_{k}] &\leq \Bre(x,x_{k}) 
- \EE[\Bre(P_{C_{\xi_k}} (x_{k}),x_{k}) \,\vert\, \mathfrak{X}_{k}]\\
&= \Bre(x,x_{k}) - \EE[D_{C_{\xi_k}}(x_{k}) \,\vert\, \mathfrak{X}_{k}]\\
& = \Bre(x,x_{k})  - \overline{D}_{\hatC}(x_{k})
\end{align*}
and hence $\EE[\Bre(x,x_{k+1})] \leq \EE[\Bre(x,x_{k})] - \EE[\overline{D}_{\hatC}(x_{k})]$.
This shows that  
 $(\EE[\Bre(x,x_k)])_{k \in \N}$ is decreasing, hence convergent, and also that
$\EE[\sum_{k=0}^{+\infty} \overline{D}_{\hatC}(x_{k})] = \sum_{k=0}^{+\infty} \EE[\overline{D}_{\hatC}(x_{k})]<+\infty$.
Then we have $\EE[\overline{D}_{\hatC}(x_{k})] \to 0$ and $\sum_{k=0}^{+\infty} \overline{D}_{\hatC}(x_{k})<+\infty$
$\PP$-a.s., so that $\overline{D}_{\hatC}(x_{k})\to 0$ $\PP$-a.s. Now, since, in virtue of
Proposition~\ref{prop:stochasticintersection2}\ref{prop:stochasticintersection2_0}, the function 
$\overline{D}_{\hatC}\colon \inte(\dom\Leg) \to \R$ is lower semicontinuous and $x_{n_k} \to \hat{x}$ in distribution,
another application of Portmanteau theorem gives 
$\EE[\overline{D}_{\hatC}(\hat{x})] \leq \liminf_{k\to+\infty} \EE[\overline{D}_{\hatC}(x_{n_k})] =0$.
This, yields that $\overline{D}_{\hatC}(\hat{x}) =0$ $\PP$-a.s.~and hence, by Proposition~\ref{prop:stochasticintersection2}\ref{prop:stochasticintersection2_i}, that $\hat{x} \in \hatC$ \PP-a.s.

\ref{thm:conv2_ii}: 
From \ref{thm:conv2_i} we have that $(D_{\hatC}(x_k))_{k \in \N}$ converges almost surely to some nonnegative 
and finite random variable $\zeta$ and that $D_{\hatC}(x_k) \leq D_{\hatC}(x_0)$ $\PP$-a.s. Thus, it follows from 
the Lebesgue's dominated convergence theorem that $\EE[D_{\hatC}(x_k)] \to \EE[\zeta]$. Now, from 
\ref{thm:conv2_iv} we have that $x_{n_k} \to \hat{x}$ in distribution and $\hat x\in C \cap \inte(\dom\Leg)$ $\PP$-a.s. 
However, $D_{\hatC}$ is continuous on $\inte(\dom \Leg)$, and hence, in virtue of Fact~\ref{fact_convRV},
$D_{\hatC}(x_{n_k}) \to D_{\hatC}(\hat{x})=0$ in distribution. Therefore, $\EE[D_{\hatC}(x_{n_k})] \to 0$. This shows 
that $\EE[\zeta]=0$ and hence $\zeta = 0$ $\PP$-a.s. So, we have that $D_{\hatC}(x_k) \to 0$ $\PP$-a.s.

\ref{thm:conv2_v}: 
By item \ref{thm:conv2_iii}, there exists a negligible set $N \subset \Omega$ such that, for all 
$\omega \in \Omega\setminus N$, $(x_k(\omega))_{k \in \N}$ is $D$-Fej\'er monotone w.r.t.~$C$. Then 
Proposition~\ref{prop:DFejer2} yields that, for every $\omega \in \Omega\setminus N$, $(x_k(\omega))_{k \in \N}$
converges to some point in $C\cap \inte(\dom \Leg)$. So, convergence $\PP$-a.s.~follows, which, by
Fact~\ref{fact_convRV}, implies convergence in mean square.
\end{proof}

\begin{remark}
\label{rmk:repcontrol}
\normalfont
In \cite{CEN1996}, in relation to Algorithm~\ref{algoABP} and for finite index set $I$, a repetitive control sequence 
$(\xi_k)_{k \in \N}$ was used, meaning that for every $i \in I$, the set $\{k \in \N \,\vert\, \xi_k = i\}$ is infinite. This type 
of control was also called random in \cite{BAU1997}. We show here that this concept can indeed cover the stochastic
setting analyzed in Theorem~\ref{thm:conv2}, when $I$ is finite. Indeed, assume that, for every $i \in I$, 
$\PP(\xi = i) = p_i>0$. Let $i \in I$ and set, for every $k \in \N$, $S_k = \{\xi_k = i\}$. Then 
$\sum_{k\in \N} \PP(S_k) = +\infty$ and, since $(\xi_k)_{k \in \N}$ is an independent sequence of random variables, 
we have that $(S_k)_{k \in \N}$ is an independent sequence of events. Hence, by the second Borel-Cantelli 
lemma \cite[Theorem~2.3.6]{DUR2010}, we derive that $\PP(\limsup_k S_k)=1$. Note that 
$\omega \in \limsup_k S_k\ \Leftrightarrow\ \{k \in \N\,\vert\, \omega \in S_k\}$ is infinite. 
Moreover, since $\tilde{\Omega}_i = \limsup_k S_k$ is a set of probability one, so is 
$\tilde{\Omega} = \cap_{i \in I} \tilde{\Omega}_i$. Therefore, if we pick $\omega \in \tilde{\Omega}$, we have that 
$(\xi_k(\omega))_{k \in \N}$ is a repetitive control sequence and hence the almost sure convergence
in Theorem~\ref{thm:conv2}\ref{thm:conv2_ii}-\ref{thm:conv2_v} can be derived from 
\cite[Theorem~8.1]{BAU1997} (see also \cite[Theorem~3.2]{CEN1996}).
\end{remark}

\section{Convergence of the method of Bregman projections for affine sets}
\label{sec:BPMaff}

In this section we analyze the convergence of the Bregman projection method for affine feasibility problems. 
We prove global and local Q-linear convergence of the method, providing also explicit global and local rates. We cover 
three options for the set control sequence: greedy, random, and adaptive random. 

We will make the following assumption.

\begin{enumerate}[label={\rm \textbf{H1}},leftmargin=4.6ex]
\item\label{eq:AS3}
The sets $C_i$'s are affine, i.e., for every $i\in I$, $C_i = \{x \in \Hh \,\vert\, A_i x = b_i\}$ for some nonzero linear 
operator $A_i\colon \Hh \to \Gg_i$  and some $b_i \in \Gg_i$.
\end{enumerate}
In this situation in both Problem~\ref{P1} and Problem~\ref{P2} the set $C$ is affine. We therefore let 
$A\colon \Hh \to \Gg$ be a linear operator and $b \in \Gg$ such that 
\begin{equation}
C = \{x \in \Hh \,\vert\, A x = b\}.
\end{equation}
As before, we will make assumption~\ref{eq:stand1} and in addition, depending on the fact that we are dealing 
with Problem~\ref{P1} or Problem~\ref{P2}, we will also make one of the following technical assumptions.

\begin{enumerate}[label={\rm \textbf{H2\textsubscript{1}}},leftmargin=5.5ex]
\item\label{eq:AS3a} 
$\Leg^*$ is twice differentiable and, for all $x\!\in\!\inte(\dom\Leg)\cap C$, 
$A \nabla^2\Leg^*(\nabla\Leg(x)) \neq 0$ and
\begin{equation*}\label{eq:UBa}
\sup_{i \in I}\norm{A_i^*(A_i \nabla^2\Leg^*(\nabla\Leg(x)) A_i^*)^\dagger A_i}<+\infty.
\end{equation*} 
\end{enumerate}
\begin{enumerate}[label={\rm \textbf{H2\textsubscript{2}}},leftmargin=5.6ex]
\item\label{eq:AS3b}  
$\Leg^*$ is twice differentiable and, for all $x\!\in\! \inte(\dom\Leg)\cap C$, 
$A \nabla^2\Leg^*(\nabla\Leg(x)) \neq 0$ and
\begin{equation*}
\esssup\norm{A_\xi^*(A_\xi \nabla^2\Leg^*(\nabla\Leg(x)) A_\xi^*)^\dagger A_\xi}<+\infty.
\end{equation*} 	
\end{enumerate}
The result below shows that \ref{eq:AS3a} and \ref{eq:AS3b} hold in a number of significant cases.

\begin{proposition}\label{prop:AF3a}
Suppose that $\Leg^*$ is twice differentiable. Then, referring to Problem~\ref{P1} (resp. Problem~\ref{P2}),
assumption~\ref{eq:AS3a} (resp. assumption~\ref{eq:AS3b}) holds if one of the following conditions is met:
\begin{enumerate}[label={\rm (\alph*)}]
\item\label{prop:AF3a_i} for every $x\in\inte(\dom\Leg)\cap C$, $\nabla^2\Leg^*(\nabla\Leg(x))$ is invertible, 
\item\label{prop:AF3a_ii} $\Leg$ is twice differentiable on $\inte(\dom\Leg)$,
\item\label{prop:AF3a_iii} $I$ is finite and for $x\in \inte(\dom\Leg)\cap C$, $A \nabla^2\Leg^*(\nabla\Leg(x)) \neq 0$, 
\item\label{prop:AF3a_iv}  $I$ is finite, $X=\R^n$, $0 \notin C$, and $\Leg(x)=(1/p)\norm{x}_p^p$, for $p\in\left]1,2\right[$.
\end{enumerate}
\end{proposition}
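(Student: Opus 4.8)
The plan is to reduce all four cases to a single uniform estimate on the operators
$M_i := A_i^*\big(A_i \nabla^2\Leg^*(\nabla\Leg(x))A_i^*\big)^{\dagger} A_i$ and to organise the argument along the implications (b)$\Rightarrow$(a), (a)$\Rightarrow$assumption, (c)$\Rightarrow$assumption, and (d)$\Rightarrow$(c). Throughout I fix $x\in\inte(\dom\Leg)\cap C$ and abbreviate $H:=\nabla^2\Leg^*(\nabla\Leg(x))$, which is symmetric positive semidefinite since $\Leg^*$ is convex.

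For (b)$\Rightarrow$(a): if $\Leg$ is twice differentiable on $\inte(\dom\Leg)$, then, since $\Leg^*$ is assumed twice differentiable, Fact~\ref{prop:bregman_div} (the equivalence between invertibility of $\nabla^2\Leg$ and twice differentiability of $\Leg^*$) gives that $\nabla^2\Leg(x)$ is invertible at every such $x$. Differentiating the Legendre identity $\nabla\Leg^*\circ\nabla\Leg=\mathrm{id}$ then yields $H\,\nabla^2\Leg(x)=I$, so $H=[\nabla^2\Leg(x)]^{-1}$ is invertible, which is precisely condition (a).

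The heart of the proof is (a)$\Rightarrow$assumption. Assuming (a), $H$ is invertible, hence positive definite, with invertible symmetric square root $H^{1/2}$. I would set $B_i:=A_iH^{1/2}$, so that $A_iHA_i^*=B_iB_i^*$, and then apply Fact~\ref{fact_inf} with $A=B_i^*$ to identify $H^{1/2}M_iH^{1/2}=B_i^*(B_iB_i^*)^{\dagger}B_i$ with the orthogonal projector onto $\range(B_i^*)$, whose norm is at most $1$. Consequently $\norm{M_i}=\norm{H^{-1/2}\big(H^{1/2}M_iH^{1/2}\big)H^{-1/2}}\le\norm{H^{-1/2}}^2=\norm{H^{-1}}$, a bound independent of $i$; this simultaneously produces the $\sup$ required by \ref{eq:AS3a} and the $\esssup$ required by \ref{eq:AS3b}. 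The remaining requirement $A\,H\neq 0$ follows because \ref{eq:stand1} forces $C\neq X$, hence $A\neq 0$, and $H$ is invertible. I expect this uniform-bound step to be the main obstacle, the point being that the Euclidean orthogonal-projection structure only re-emerges after the change of variables $B_i=A_iH^{1/2}$ (which is exactly the mechanism the paper highlights as the nontrivial part of passing from orthogonal to Bregman projections).

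For (c), finiteness of $I$ makes the supremum (respectively the essential supremum over the finitely many values taken by $\xi$) a finite maximum of the numbers $\norm{M_i}$, each of which is finite as the pseudoinverse of a fixed operator; since $A\,H\neq 0$ is part of the hypothesis, the assumption holds with no further work. Finally (d) reduces to (c): with $\dom\Leg=\R^n$ and $\Leg^*(y)=(1/q)\norm{y}_q^q$, where $q=p/(p-1)>2$, the Hessian $H$ is the diagonal operator whose $j$-th entry is a positive multiple of $\abs{x_j}^{2-p}$, hence vanishes exactly at the coordinates $j$ with $x_j=0$. Since $I$ is finite it only remains to check $A\,H\neq 0$: were $A\,H=0$, the columns of $A$ indexed by the support of $x$ would all vanish, whence $Ax=0$; but $x\in C$ gives $Ax=b$, so $b=0$ and $0\in C$, contradicting the hypothesis $0\notin C$.
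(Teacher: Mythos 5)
Your proposal is correct and follows essentially the same route as the paper: (b) reduces to (a) via the inverse-Hessian identity, (a) yields the uniform bound $\norm{M_i}\le\norm{H^{-1}}$ by conjugating the orthogonal projector onto $\range(A_iH^{1/2})^*$ exactly as in the paper's proof, (c) is the same finiteness observation, and (d) rests on the same computation that the diagonal Hessian entries are positive multiples of $\abs{x_j}^{2-p}$. The only cosmetic difference is in (d), where you argue by contradiction that $AH=0$ would force $Ax=b=0$, while the paper exhibits an explicit $v$ with $Hv=x$ so that $AHv=b\neq0$; both hinge on the same fact that $x$ lies in the range of $H$.
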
	

\begin{proof}
We will prove only the case \ref{eq:AS3a}. The other is similar.

\ref{prop:AF3a_i}: 	
Let $x\in \inte(\dom\Leg)\cap C$ and set $H=[\nabla^2\Leg^*(\nabla\Leg(x))]^{1/2}$. Since $H^2$ is invertible, 
we have $A \neq 0\ \Rightarrow\ A H^2 \neq 0$. Moreover, it follows from Fact~\ref{fact_inf} that 
$Q_i(x) = HA_i^*(A_i H^2 A_i^*)^\dagger A_i H$ is the orthogonal projector onto $\range(H A_i^*)$. Since $H$
is invertible, we have $A_i^*(A_i H^2 A_i^*)^\dagger A_i = H^{-1} Q_i(x) H^{-1}$ and hence
$\Vert A_i^*(A_i H^2 A_i^*)^\dagger A_i\rVert \leq \lVert H^{-1}\rVert \norm{Q_i(x)} \lVert H^{-1}\rVert = \lVert H^{-1}\rVert^2$. Thus, the last condition in \ref{eq:AS3a} follows.
	
\ref{prop:AF3a_ii}: 
Since both $\Leg$ and $\Leg^*$ are twice differentiable, their Hessians are inverse to each other. Hence, \ref{prop:AF3a_i}, and a fortiori \ref{eq:AS3a} holds. 
	
\ref{prop:AF3a_iii}: 
If $I$ is finite, then the last condition in \ref{eq:AS3a} trivially holds. 
	
\ref{prop:AF3a_iv}: 
First, note that $\Leg^*(y) = q^{-1}\norm{y}^q$, where $p^{-1}+q^{-1}=1$, and that $\dom \Leg =\dom \Leg^* =\R^n$. 
Next, let $x\in \inte(\dom \Leg) \cap C$ and  $y= \nabla\Leg(x)$. Then, for every $j\in \{1,\ldots, n\}$, 
$y_j = \mathrm{sgn}(x_j)\abs{x_j}^{p-1}$. Hence, $\nabla^2\Leg^*(y)$ is a diagonal matrix such that 
$[\nabla^2\Leg^*(y)]_{j,j} = (q-1)\abs{y_j}^{q-2} = (q-1)\abs{x_j}^{(p-1)(q-2)}$, for every $j\in \{1,\ldots, n\}$. 
Now, since $x\neq0$, if we define the vector $v\in\R^n$ as
\begin{equation}
(\forall\,j\in [n])\quad v_j = 
\begin{cases}
(q-1)^{-1}\mathrm{sgn}(x_j)\abs{x_j}^{1-(p-1)(q-2)}&\text{if } x_j\neq0\\
0&\text{otherwise},
\end{cases}	
\end{equation}
we have $v\neq0$ and $\nabla^2\Leg^*(y) v = x$, and hence $A\nabla^2\Leg^*(y) v = Ax = b\neq 0$ 
(since $0 \notin C$). So, the first part of \ref{eq:AS3a} follows. The last condition in  \ref{eq:AS3a} follows 
from \ref{prop:AF3a_iii}.	
\end{proof}

\subsection{Greedy set control scheme}

We  address Problem~\ref{P1} assuming \ref{eq:stand1}, \ref{eq:AS3}, and \ref{eq:AS3a}. We first give a 
general convergence theorem and then we study rate of convergence going through several technical 
results and finally providing a theorem.

\begin{theorem}
\label{thm3}
Under the assumptions of Problem~\ref{P1} and \ref{eq:stand1}, let $(x_k)_{k \in \N}$ be generated by 
Algorithm~\ref{algoABP} using any (deterministic) sequence $(\xi_k)_{k \in \N}$ in $I$ and let 
$x_\star = P_C(x_0)$. Then the following hold.

\begin{enumerate}[label={\rm (\roman*)}]
\item\label{thm3_0}
For every $k \in \N$, $\nabla \Leg(x_\star) - \nabla \Leg(x_k) \in \range(A^*)$.
\item\label{thm3_i} 
$(\forall\, k \in \N)\quad P_C(x_k)= x_\star\ $ and $\ D_C(x_{k+1}) + D_{C_{\xi_k}}(x_{k}) = D_C(x_{k})$.
\item\label{thm3_ii}  
Suppose that \ref{eq:SC} and \ref{eq:C1} hold. Then  $x_k \to x_\star$.
\end{enumerate}
\end{theorem}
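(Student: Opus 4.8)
The plan is to exploit the fact that every constraint set $C_{\xi_k}$ contains the target set $C$, so that each iteration is a Bregman projection onto an affine \emph{superset} of $C$; Lemma~\ref{lm:aff_proj_1} then forces the projection of the iterate onto $C$ to stay put. Concretely, I would first establish the invariance $P_C(x_k)=x_\star$ by induction on $k$. The base case $k=0$ is the definition of $x_\star$. For the inductive step, since $C=\bigcap_{i\in I}C_i\subset C_{\xi_k}$, both $C$ and $C_{\xi_k}$ are affine by assumption~\ref{eq:AS3}, and $x_k\in\inte(\dom\Leg)$, Lemma~\ref{lm:aff_proj_1} applied with $C_1=C_{\xi_k}$, $C_2=C$ gives $P_C(x_{k+1})=P_C(P_{C_{\xi_k}}(x_k))=P_C(x_k)$, and the inductive hypothesis yields $P_C(x_{k+1})=x_\star$. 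This proves the first assertion of \ref{thm3_i}.

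Statement \ref{thm3_0} is then immediate: since $x_\star\in C$ we have $P_C(x_\star)=x_\star=P_C(x_k)$ by Fact~\ref{prop:projproperties}\ref{prop:projproperties_0}, so Fact~\ref{prop:bregman_proj}\ref{prop:bregman_proj_0}, applied to the pair $x_\star,x_k$, gives $\nabla\Leg(x_\star)-\nabla\Leg(x_k)\in\range(A^*)$. The distance decomposition in \ref{thm3_i} is just the second conclusion of the same application of Lemma~\ref{lm:aff_proj_1}, namely $D_C(P_{C_{\xi_k}}(x_k))+D_{C_{\xi_k}}(x_k)=D_C(x_k)$, i.e.\ $D_C(x_{k+1})+D_{C_{\xi_k}}(x_k)=D_C(x_k)$. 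Thus parts \ref{thm3_0} and \ref{thm3_i} follow directly from the affine projection calculus, without using \ref{eq:SC} or the greedy rule \ref{eq:C1}.

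For \ref{thm3_ii} the strategy is to reduce convergence to the scalar statement $D_C(x_k)\to0$ and then to identify the limit through the Bregman distance itself. Under \ref{eq:SC} and the greedy control \ref{eq:C1}, Theorem~\ref{thm:ABP2} already guarantees $D_C(x_k)\to0$. On the other hand, by the first part of \ref{thm3_i} the Bregman projection of $x_k$ onto $C$ is always $x_\star$, so by definition of the Bregman distance to a set $D_C(x_k)=\Bre(P_C(x_k),x_k)=\Bre(x_\star,x_k)$. Hence $\Bre(x_\star,x_k)\to0$, and since $\dom\Leg^*$ is open by assumption~\ref{eq:stand1}, Fact~\ref{prop:bregman_div}\ref{prop:bregman_div_x} converts this into $x_k\to x_\star$, which is the claim. (Equivalently, one may take the limit point $\hat x\in C$ supplied by Theorem~\ref{thm:ABP2} and use continuity of $P_C$ from Fact~\ref{prop:projproperties}\ref{prop:projproperties_iv} to get $x_\star=P_C(x_k)\to P_C(\hat x)=\hat x$.)

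The only genuine work is delegated: the main obstacle is establishing $D_C(x_k)\to0$ for the greedy scheme, which I am importing from Theorem~\ref{thm:ABP2} (Bregman's theorem) and which is where \ref{eq:SC} is really needed. The affine-specific content here is entirely contained in the invariance $P_C(x_k)\equiv x_\star$: it is what pins the limit to the specific point $x_\star=P_C(x_0)$ rather than to an arbitrary point of $C$, and it is exactly the step that would fail for general convex $C_i$, since the clean composition and decomposition of projections in Lemma~\ref{lm:aff_proj_1} rely on the Pythagorean equality of Fact~\ref{prop:bregman_proj}\ref{prop:bregman_proj_i}, available only for affine sets.
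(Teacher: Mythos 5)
Your proof is correct, and parts \ref{thm3_i} and \ref{thm3_ii} follow essentially the paper's own argument: the invariance $P_C(x_k)\equiv x_\star$ and the distance decomposition both come from Lemma~\ref{lm:aff_proj_1} applied with $C\subset C_{\xi_k}$, and the identification of the limit in \ref{thm3_ii} via Theorem~\ref{thm:ABP2} is exactly what the paper does (your parenthetical alternative \emph{is} the paper's proof; your primary route through $D_C(x_k)=\Bre(x_\star,x_k)\to0$ and Fact~\ref{prop:bregman_div}\ref{prop:bregman_div_x} is an equally valid minor variant). The one genuinely different step is \ref{thm3_0}: the paper proves it first and directly, by telescoping the KKT conditions \eqref{eq:KKT} --- each projection adds a dual increment $v_{\xi_j}\in\range(A_{\xi_j}^*)\subset\range(A^*)$ to $\nabla\Leg(x_k)$, and $\nabla\Leg(x_\star)-\nabla\Leg(x_0)\in\range(A^*)$ as well, so the difference stays in $\range(A^*)$ --- whereas you reverse the logical order and deduce \ref{thm3_0} from \ref{thm3_i} via the biconditional in Fact~\ref{prop:bregman_proj}\ref{prop:bregman_proj_0}, using $P_C(x_\star)=x_\star=P_C(x_k)$. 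Both are sound and non-circular (Lemma~\ref{lm:aff_proj_1} rests only on the Pythagorean identity, not on that biconditional). Your route is shorter and exploits a stated fact as a black box; the paper's route is self-contained and makes the dual-increment structure explicit, which is the form that is reused verbatim in the stochastic analogue, Theorem~\ref{thm3b}\ref{prop_randomaffine_i}.
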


\begin{proof}
\ref{thm3_0}: 
Let, for every $i \in I$, $C_i = \{x \in \Hh \,\vert\, A_i x = b_i\}$ for some linear operator $A_i\colon \Hh \to \Gg$ 
and $b_i \in \Gg$. Since $C \subset C_i$, we have $\Ker(A) \subset \Ker(A_i)$ and hence 
$\range(A_i^*) = \Ker(A_i)^\perp \subset \Ker(A)^\perp = \range(A^*)$. Moreover,  \eqref{eq:KKT} yields that 
$\nabla\Leg(x_{\star})=\nabla\Leg(x_0) + v_{\star}$ for some $v_\star \in \range(A^*)$ and, for every 
$k \in \N$, $\nabla\Leg(x_{k+1})=\nabla\Leg(x_{k}) + v_{\xi_{k}}$ for some
 $v_{\xi_k} \in \range(A_{\xi_k}^*) \subset \range(A^*)$.
Therefore, $\nabla\Leg(x_k)=\nabla\Leg(x_{0}) + \sum_{j=0}^{k-1} v_{\xi_j}$
and hence $\nabla\Leg(x_\star) - \nabla\Leg(x_k) =v_\star - \sum_{j=0}^{k-1} v_{\xi_j} \in \range(A^*)$.

\ref{thm3_i}:
Let $k \in \N$. Since $C \subset C_{\xi_k}$ and $x_{k+1} = P_{C_{\xi_k}}(x_{k})$, Lemma~\ref{lm:aff_proj_1} 
(with $x=x_{k}$) yields that
\begin{equation}
\label{eq:20201021b}
P_C(x_{k}) = P_C(x_{k+1})\quad\text{and}\quad
D_C(x_{k+1}) + D_{C_{\xi_k}}(x_{k}) = D_C(x_{k}).
\end{equation}
Thus, the second part of the statement follows immediately, while the first part follows by
applying the first equation in \eqref{eq:20201021b} recursively.

\ref{thm3_ii}: 
By Theorem~\ref{thm:ABP2} we have $x_k \to \hat{x}$ for some $\hat{x} \in C\cap \inte(\dom \Leg)$.
Since $P_C$ is continuous on $\inte(\dom \Leg)$, we have $P_C(x_k) \to P_C(\hat{x}) = \hat{x}$.
However, it follows from \ref{thm3_i} that $P_C(x_k) = x_\star$, for every $k \in \N$.
Therefore, $\hat{x} =  x_\star$. 
\end{proof}

Next, we provide Lemma \ref{lm:gamma_I} containing the definition of $\gamma_\C$, which governs the local 
linear rate. Then, we introduce Lemma \ref{lm:sigma_I} that defines $\sigma_\C$, which is critical for obtaining 
the global convergence rate. Finally, we conclude with Theorem \ref{thm:main2} covering the Q-linear convergence 
of the greedy Bregman projection method for affine sets.

\begin{lemma}\label{lm:gamma_I}
Referring to Problem~\ref{P1}, suppose that assumptions \ref{eq:stand1}, \ref{eq:AS3}, and \ref{eq:AS3a} hold. 
Let, for every $x\in\inte(\dom\Leg)\cap C$, $Q_i(x)$ be the orthogonal projection onto 
$\range([\nabla^2 \Leg^*(\nabla\Leg(x))]^{1/2} A_i^*)$, $V(x)=\range([\nabla^2 \Leg^*(\nabla\Leg(x))]^{1/2} A^*)$, and 
\begin{equation}
\label{eq:main2_i}
\gamma_{\C}(x) = \inf_{v \in V(x) \setminus \{0\}}
\sup_{i \in I} \frac{\norm{Q_i(x) v}^2}{ \norm{v}^2}.
\end{equation}
Then, for every $x\in\inte(\dom\Leg)\cap C$, $\gamma_{\C}(x)\in\left]0,1\right]$. Moreover, for every 
$\varepsilon\in\left]0,1\right[$ there exists $\delta>0$, such that for every $x\in\inte(\dom\Leg)$, 
\begin{equation}\label{eq:20200930a}
D_C(x)< \delta\ \implies\ \inf_{i\in I} D_C(P_{C_i}(x))
\leq \dfrac{1+\varepsilon}{1-\varepsilon} [1-\gamma_{\C}(P_C(x))]\,D_{C}(x).
\end{equation}
\end{lemma}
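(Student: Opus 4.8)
The plan is to combine an exact \emph{duality} upper bound on $\inf_{i}D_C(P_{C_i}(x))$, obtained from Lemma~\ref{lm:main1}, with the local quadratic approximation of the dual Bregman distance in Fact~\ref{prop:bregman_div}\ref{prop:bregman_div_vii}. Throughout I write $x_\star=P_C(x)$, $y=\nabla\Leg(x)$, $y_\star=\nabla\Leg(x_\star)$, and $H=[\nabla^2\Leg^*(y_\star)]^{1/2}$, so that by Fact~\ref{fact_inf} (applied to $HA_i^*$) the operator $Q_i(x_\star)=HA_i^*(A_iH^2A_i^*)^\dagger A_iH$ is the orthogonal projector onto $\range(HA_i^*)$. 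I would first dispose of the claim $\gamma_{\C}(x)\in\left]0,1\right]$. The bound $\gamma_{\C}(x)\le1$ is immediate since each $Q_i(x)$ is an orthogonal projection, hence $\norm{Q_i(x)v}\le\norm{v}$. For strict positivity, note that $\Ker(A)=\bigcap_{i}\Ker(A_i)$ (the direction space of $C=\bigcap_i C_i$), so $\range(A^*)=\sum_i\range(A_i^*)$ and, applying $H$, $V(x)=\range(HA^*)=\sum_i\range(HA_i^*)$. Thus if $v\in V(x)$ has $Q_i(x)v=0$ for all $i$, then $v\perp\range(HA_i^*)$ for every $i$, hence $v\perp V(x)$, forcing $v=0$. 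Therefore $v\mapsto\sup_i\norm{Q_i(x)v}^2$ is strictly positive on $V(x)\setminus\{0\}$; being a supremum of continuous functions it is lower semicontinuous, so it attains a positive minimum on the compact unit sphere of $V(x)$, which is precisely $\gamma_{\C}(x)$.

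For the local estimate the pivotal observation is the exact identity $D_C(P_{C_i}(x))=\Bre(x_\star,P_{C_i}(x))$, valid because $P_C(P_{C_i}(x))=x_\star$ by Lemma~\ref{lm:aff_proj_1}. Combining this with Lemma~\ref{lm:main1}\ref{main_rate11} (with $z=x_\star\in C\subset C_i$) and Fact~\ref{prop:bregman_div}\ref{prop:bregman_div_vi} gives, for \emph{every} admissible $\lambda$,
\[
D_C(P_{C_i}(x))\le\Bre\bigl(x_\star,\nabla\Leg^*(y+A_i^*\lambda)\bigr)=\Bredual\bigl(y+A_i^*\lambda,\,y_\star\bigr).
\]
The freedom to choose $\lambda$ is what makes the argument clean, since it bypasses any linearization of the KKT system. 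Writing $v:=H(y_\star-y)=HA^*\mu\in V(x_\star)$, where $\mu$ is the dual multiplier of the projection onto $C$ from \eqref{eq:KKT}, I would take $\lambda_i=(A_iH^2A_i^*)^\dagger A_iHv$, so that $HA_i^*\lambda_i=Q_i(x_\star)v$ \emph{exactly}. Then $H\bigl((y+A_i^*\lambda_i)-y_\star\bigr)=-v+Q_i(x_\star)v=-(I-Q_i(x_\star))v$, and the associated quadratic form equals $\tfrac12\norm{(I-Q_i(x_\star))v}^2=\tfrac12\bigl(\norm{v}^2-\norm{Q_i(x_\star)v}^2\bigr)$.

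Invoking Fact~\ref{prop:bregman_div}\ref{prop:bregman_div_vii} for $\Leg^*$ at the base point $y_\star$ with tolerance $\varepsilon$ then yields, for $x$ sufficiently close to $x_\star$, both $D_C(x)=\Bredual(y,y_\star)\ge(1-\varepsilon)\tfrac12\norm{v}^2$ and $\Bredual(y+A_i^*\lambda_i,y_\star)\le(1+\varepsilon)\tfrac12\bigl(\norm{v}^2-\norm{Q_i(x_\star)v}^2\bigr)$ for each $i$. Taking the infimum over $i$, using $\sup_i\norm{Q_i(x_\star)v}^2\ge\gamma_{\C}(x_\star)\norm{v}^2$ (legitimate since $v\in V(x_\star)$, by \eqref{eq:main2_i}), and substituting $\tfrac12\norm{v}^2\le D_C(x)/(1-\varepsilon)$ produces exactly
\[
\inf_{i\in I}D_C(P_{C_i}(x))\le(1+\varepsilon)\tfrac12\bigl(1-\gamma_{\C}(x_\star)\bigr)\norm{v}^2\le\frac{1+\varepsilon}{1-\varepsilon}\bigl[1-\gamma_{\C}(x_\star)\bigr]D_C(x),
\]
which is \eqref{eq:20200930a}. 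The crucial feature is that the \emph{only} loss comes from the single quadratic approximation, which is precisely why the factor $\tfrac{1+\varepsilon}{1-\varepsilon}$ — and nothing worse — appears.

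The main obstacle is making the threshold $\delta$ uniform, and there are three points to watch. First, the per-$i$ bound must hold uniformly over the possibly infinite index set $I$: one must control $\norm{A_i^*\lambda_i}\le\norm{A_i^*(A_iH^2A_i^*)^\dagger A_i}\,\norm{H^2(y_\star-y)}$ independently of $i$, and this is exactly the content of assumption~\ref{eq:AS3a}; it simultaneously guarantees that $y+A_i^*\lambda_i\in\inte(\dom\Leg^*)$ and that it lies within the Fact~\ref{prop:bregman_div}\ref{prop:bregman_div_vii} radius once $\norm{y-y_\star}$ is small. Second, one must turn $D_C(x)<\delta$ into smallness of $\norm{y-y_\star}$ via Fact~\ref{prop:bregman_div}\ref{prop:bregman_div_x} and continuity of $\nabla\Leg$, bearing in mind that the base point $y_\star=\nabla\Leg(P_C(x))$ itself moves with $x$; this interplay is the most delicate bookkeeping (and is harmless in the algorithmic application, where Theorem~\ref{thm3}\ref{thm3_i} keeps $P_C(x_k)$ constant). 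Third, the relative-error part of Fact~\ref{prop:bregman_div}\ref{prop:bregman_div_vii} requires the perturbation to avoid $\Ker(H^2)$: for the upper bounds this is automatic, since $v\in\range(H)$ forces $(I-Q_i(x_\star))v\in\range(H)=(\Ker H^2)^\perp$, which meets $\Ker(H^2)$ only at $0$; the genuinely degenerate case is the lower bound when $v=0$ (i.e.\ $y-y_\star\in\Ker H^2$ with $x\neq x_\star$), which escapes this scheme and must be handled by a separate limiting argument.
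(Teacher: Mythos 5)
Your overall strategy is the same as the paper's: bound $D_C(P_{C_i}(x))=\Bre(x_\star,P_{C_i}(x))$ from above by $\Bredual(y+A_i^*\lambda,y_\star)$ for a hand-picked dual variable $\lambda$ via Lemma~\ref{lm:main1}, choose $\lambda_i$ so that $HA_i^*\lambda_i=Q_i(x_\star)v$ with $v=H(y_\star-y)$, and then replace both Bregman distances by the quadratic form $\tfrac12\norm{H(\cdot)}^2$ using Fact~\ref{prop:bregman_div}\ref{prop:bregman_div_vii}, with \ref{eq:AS3a} giving the uniform-in-$i$ control of $\norm{A_i^*\lambda_i}$. The positivity argument for $\gamma_{\C}$ is also the paper's, merely phrased via $\range(A^*)=\sum_i\range(A_i^*)$ rather than by contradiction.

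The genuine gap is the degenerate kernel case, which you flag but do not close, and which is exactly what the paper's extra machinery (Steps 2--3 and the final limit $\tau\to0$) exists for. Fact~\ref{prop:bregman_div}\ref{prop:bregman_div_vii} controls $\Bredual(\tilde y,y_\star)$ \emph{relative to} $\tfrac12\norm{H(\tilde y-y_\star)}^2$ only when $\tilde y-y_\star\notin\Ker(H)$, and since $\nabla^2\Leg^*(y_\star)$ is allowed to be rank deficient two things can fail. First, if $v=H(y_\star-y)=0$ while $y\neq y_\star$, all your $\lambda_i$ vanish and the scheme yields only the trivial $D_C(P_{C_i}(x))\le D_C(x)$; this is the case you defer to ``a separate limiting argument'' that is never supplied. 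Second, your claim that the upper bounds are ``automatic'' is not right: the admissibility condition is on the perturbation $y+A_i^*\lambda_i-y_\star$ itself, not on its image $-(I-Q_i(x_\star))v$ under $H$. When $(I-Q_i(x_\star))v=0$ but $y+A_i^*\lambda_i\neq y_\star$, the perturbation lies in $\Ker(H)\setminus\{0\}$, the quadratic model is identically zero while $\Bredual(y+A_i^*\lambda_i,y_\star)$ need not be, and the conclusion $D_C(P_{C_i}(x))\le(1+\varepsilon)\cdot 0$ is unjustified; this matters precisely for the indices $i$ that are near-optimal for the supremum defining $\gamma_{\C}$. The paper resolves both issues at once by perturbing: it picks $\tilde w\in\range(A^*)$ with $\norm{H\tilde w}=1$ and $w_i\in\range(HA_i^*)$ with $\norm{w_i}=2$, works with $u_\tau=H(y_\star-y+\tau\tilde w)\neq0$ and $v_{i,\tau}=A_i^*[A_iH^2A_i^*]^\dagger A_iH(u_\tau+\tau w_i)$, so that $\norm{H(y+v_{i,\tau}-y_\star)}^2=\norm{[I-Q_i(x_\star)]u_\tau}^2+\tau^2\norm{Q_i(x_\star)H\tilde w+w_i}^2>0$, tracks the resulting $O(\sqrt{\tau})$ error terms through the estimates, and only at the very end lets $\tau\to0$. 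Without this device (or an equivalent one) your proof covers only the case where $\nabla^2\Leg^*(y_\star)$ is injective on the relevant directions.
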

\vspace{-2ex}
\begin{proof}
Let $x \in \inte(\dom \Leg)$ and $y=\nabla\Leg(x)$. First of all we note that assumption \ref{eq:AS3a} yields 
$[\nabla^2 \Leg^*(y)]^{1/2}[\nabla^2 \Leg^*(y)]^{1/2} A^* = [\nabla^2 \Leg^*(y)] A^* \neq 0$ and hence 
$[\nabla^2 \Leg^*(y)]^{1/2} A^* \neq 0$, which in turns implies $V(x)\neq\{0\}$.
Clearly, $\gamma_{\C}(x) = \min_{v \in V(x), \norm{v}=1} \sup_{i\in I} \norm{Q_i(x) v}^2$ and hence 
$\gamma_{\C}(x) \in[0,1]$. Now assume, by contradiction, that  $\gamma_{\C}(x)=0$. Then, there exists 
$v \in V(x)$, with $\norm{v}=1$, such that
\begin{equation}
(\forall\, i \in I)\quad v\in \Ker(Q_i(x))= \range([\nabla^2 \Leg^*(y)]^{1/2} A_i^*)^\perp 
= \Ker( A_i [\nabla^2 \Leg^*(y)]^{1/2}),
\end{equation}
that is, $[\nabla^2 \Leg^*(y)]^{1/2} v \in\Ker(A_i)$. Then, since $\bigcap_{i\in I}\Ker(A_i)=\Ker(A)$, we have 
that $[\nabla^2 \Leg^*(y)]^{1/2} v \in\Ker(A)$, and, hence, $v\in \Ker( A [\nabla^2 \Leg^*(y)]^{1/2})=V(x)^{\perp}$. 
Thus, since $v\neq 0$, we obtain a contradiction, and hence necessarily $\gamma_{\C}(x)>0$.

The proof of \eqref{eq:20200930a} is quite technical. Therefore, we will proceed through 6 steps. 
First note that if $x\in\inte(\dom\Leg)\cap C$, then, recalling Fact~\ref{prop:projproperties}\ref{prop:projproperties_0}, 
we have $0=D_C(x)=D_C(P_{C_i}(x))$, for every $i \in I$, hence the inequality in \eqref{eq:20200930a} trivially holds.
Therefore, in the following we let $x\in\inte(\dom\Leg) \setminus C$ and set $x_\star=P_C(x)$, $y = \nabla\Leg(x)$ and 
$y_\star = \nabla \Leg(x_\star)$. Additionally, for the sake of brevity, we set $H = [\nabla^2 \Leg^*(y_\star)]^{1/2}$.
Let $i \in I$ and set $x_i = P_{C_i}(x)$ and $y_i = \nabla \Leg(x_i)$.  

\emph{Step }$1$: We have 
\begin{equation}
\label{eq:20201007e}
(\forall\, v_i \in \range(A_i^*))\quad y + v_i \in \inte(\dom \Leg^*)\ \implies\ D_C(x_i) \leq D_{\Leg^*}(y + v_i, y_\star).
\end{equation}
Indeed, Lemma~\ref{lm:aff_proj_1} yields $D_C(x_i) = \Bre(P_C(x_i), x_i) = \Bre(P_C(x),x_i)= \Bre(x_\star,P_{C_i}(x))$,
with $x_\star \in C_i$. Hence, using Lemma~\ref{lm:main1} and Fact~\ref{prop:bregman_div}\ref{prop:bregman_div_vi},
\eqref{eq:20201007e} follows.

\emph{Step }$2$: There exists $\tilde{w} \in \range(A^*)$ such that $\norm{H \tilde{w}} = 1$ and, for all $\tau>0$, 
\begin{equation}
\label{eq:20201123b}
u_\tau:= H(y_\star - y + \tau \tilde w) \in V(x_\star)\!\setminus\!\{0\}.
\end{equation}
Indeed, first recall that $\range(H A^*)=V(x_\star)\neq \{0\}$. It follows from \eqref{eq:KKT} that 
$y_\star - y \in \range(A^*)$. Now, if $H(y_\star - y)\neq 0$ we define $\tilde{w}=(y_\star - y)/\norm{H(y_\star - y)}$
and \eqref{eq:20201123b} follows. Otherwise, since $\range(H A^*)\neq \{0\}$,  we can pick $\tilde{w} \in \range(A^*)$
such that $\norm{H \tilde{w}}=1$ and again \eqref{eq:20201123b} follows.

\emph{Step }$3$: Suppose that  $HA_i^*\neq0$. We prove that, for every $\tau>0$, there exists 
$v_{i,\tau} \in \range(A_i^*)$ such that $y+v_{i,\tau} - y_\star\not\in\Ker(H)$ and
\begin{equation}
\label{eq:20201213b}
\norm{y+v_{i,\tau} - y_\star}\leq (1+M\norm{H}^2)\norm{y_\star-y} + 3\tau M \norm{H},
\end{equation}
where $\sup_{i \in I}\norm{A_i^*(A_i H^2 A_i^*)^\dagger A_i}\leq M<+\infty$, due to \ref{eq:AS3a}.
Indeed, since $HA_i^*\neq0$, there exists $w_i \in \range(HA^*_i)$ such that $\norm{w_i}=2$. Now, note that
\begin{equation}
\label{eq:20201212a}
Q_i(x_\star) = H A_i^*[A_i H^2 A_i^*]^\dagger  A_i H
\end{equation}
and let, for every $\tau>0$,
\begin{equation}
\label{eq:20201123c}
v_{i,\tau}:=A_i^*[A_i H^2 A_i^*]^\dagger  A_i H (u_\tau +\tau  w_i)\in\range(A_i^*).
\end{equation}
Then, recalling \eqref{eq:20201123b}, \eqref{eq:20201212a}, and the fact that $w = H \tilde{w}$, we have
\begin{align*}
H(y_\star-y-v_{i,\tau}) &= H(y_\star-y) - Q_i(x_\star)(u_\tau + \tau w_i)\\
&=[I - Q_i(x_\star)]H(y_\star-y) - \tau Q_i(x_\star)(w+w_i)
\end{align*}
and, since $Q_i(x_\star)$ is the projector onto $\range(HA_i^*)$ and $w_i \in \range(HA_i^*)$, we have
\begin{equation}
\label{eq:20201123a}
 \norm{H(y+v_{i,\tau} - y_\star)}^2 = \norm{[I - Q_i(x_\star)]H(y_\star-y)}^2 + \tau^2 \norm{Q_i(x_\star)w +  w_i}^2.
 \end{equation} 
 In the above formula we have $Q_i(x_\star)w \neq - w_i$, since $\norm{w_i} = 2$ while  
 $\norm{Q_i(x_\star)w} \leq \norm{w} = 1$. Therefore  $\norm{H(y+v_{i,\tau} - y_\star)}^2>0$ and hence 
 $y+v_{i,\tau} - y_\star\not\in\Ker(H)$. Finally, inequality \eqref{eq:20201213b} follows by bounding 
 $\norm{v_{i,\tau}}$ using \eqref{eq:20201123c}, \eqref{eq:20201123b}, assumption~\ref{eq:AS3a},
and the fact that $\norm{w_i} = 2$ and $\norm{H \tilde{w}}=1$. 
  
\emph{Step }$4$: Suppose that  $HA_i^*\neq0$ and let $\varepsilon \in \left]0,1\right[$. We prove that for 
$\tau>0$ sufficiently small
\begin{equation}
\label{eq:20201213c}
\frac{1 - \varepsilon}{2} \norm{u_\tau}^2 \leq D_C(x)\ \text{and}\ D_C(x_i) \leq 
\frac{1 + \varepsilon}{2} \Big(\norm{[I - Q_i(x_\star)]u_\tau}^2 + \sqrt{\tau} D_C(x) \Big). 
\end{equation}
Indeed, it follows from the second part of Fact~\ref{prop:bregman_div}\ref{prop:bregman_div_vii}, applied to 
$D_{\Leg^*}$, that there exists $\tilde \delta>0$ such that if $\norm{\tilde y-y_\star}<\tilde\delta$ and 
$\tilde y-y_\star\not\in\Ker(H)$, then 
\begin{equation*}
\frac{\sqrt{1 - \varepsilon}}{2} \scalarp{H^2 (\tilde y - y_\star), \tilde y - y_\star} \leq D_{\Leg^*}(\tilde y, y_\star) \leq \frac{1 + \varepsilon}{2} \scalarp{H^2 (\tilde y - y_\star), \tilde y - y_\star}.
\end{equation*}
Therefore, setting $\beta_\star= 1 + \max\{3 M\norm{H}^2+M\norm{H},\norm{\tilde w}\}>1$, it follows from the 
inequality $\norm{y_\star-y-\tau\tilde w} \leq \norm{y_\star-y} + \tau \norm{\tilde w}$ and \eqref{eq:20201213b} 
that if $\tau \leq \norm{y-y_\star}$ and $\norm{y-y_\star}\leq\tilde\delta / \beta_\star$, we have
\begin{equation}
\label{eq:20201005c}
D_{\Leg^*}(y+v_{i,\tau}, y_\star) 
\leq \frac{1 + \varepsilon}{2} \scalarp{H^2 (y+v_{i,\tau} - y_\star), y+v_{i,\tau} - y_\star}
\end{equation}
and
\begin{equation}\label{eq:20201005d}
D_{\Leg^*}(y+\tau \tilde w, y_\star) \geq \frac{\sqrt{1 - \varepsilon}}{2} 
\scalarp{H^2 (y +\tau \tilde w - y_\star), y +\tau \tilde w - y_\star}.
\end{equation}
Now, the continuity of $\nabla\Leg$ and Fact \ref{prop:bregman_div}\ref{prop:bregman_div_x} yields that there 
exists $\delta>0$ such that if $D_C(x)<\delta$ then, $\norm{y-y_\star}<\tilde\delta / \beta_\star$, and hence, 
collecting \eqref{eq:20201007e} and \eqref{eq:20201005c}, we obtain 
$D_C(x_i) \leq D_{\Leg^*}\big(y+v_{i,\tau},y_\star \big)\leq ((1 + \varepsilon)/2) \norm{ H(y+v_{i,\tau} - y_\star)}^2$.
However, it also holds that $\norm{ H(y+v_{i,\tau} - y_\star)} 
= \norm{[I-Q_i(x_\star)]u_\tau + \tau (w-w_i)}\leq \norm{[I-Q_i(x_\star)]u_\tau}+3 \tau$. Therefore, since 
$\norm{u_\tau} \leq \norm{H(y-y_\star)} + \tau \norm{H \tilde w} \leq (\norm{H} + 1) \norm{y - y_\star}$,
\begin{align*}
D_C(x_i) &\leq \frac{1 + \varepsilon}{2}\left( \norm{[I-Q_i(x_\star)]u_\tau}^2 
+ 9 \tau^2 +6 \tau \norm{u_\tau} 
\right)\\
&\leq \frac{1 + \varepsilon}{2}\left( \norm{[I-Q_i(x_\star)]u_\tau}^2 + 3 \tau(2 \norm{H} + 5)
\norm{y - y_\star} \right)
\end{align*}
which, for 
$\tau\leq\tau_\star^{(1)}:= \min\{\norm{y_\star-y},9^{-1}D_C(x)^2\norm{y_\star-y}^{-2}(2 \norm{H} + 5)^{-2}\}$, gives 
\begin{equation}
\label{eq:20201123d}
D_C(x_i) \leq \frac{1 + \varepsilon}{2} \Big( \norm{[I - Q_i(x_\star)]u_\tau}^2 + \sqrt{\tau} D_C(x) \Big). 
\end{equation}
On the other hand, $D_C(x) = D_{\Leg}(x_\star, x) = D_{\Leg^*}(y, y_\star)  \neq0$. Hence, using the continuity of
$D_{\Leg^*}(\cdot,y_\star)$,  we have that there exists $\tau_\star^{(2)}>0$ such that for every $\tau\leq\tau_\star^{(2)}$,
$D_C(x)\geq \sqrt{1-\varepsilon} D_{\Leg^*}(y+\tau \tilde w, y_\star)$. So, \eqref{eq:20201005d} yields
\begin{equation}
\label{eq:20201123e}
D_C(x)\geq \frac{1 - \varepsilon}{2} \scalarp{H^2 (y+ \tau \tilde{w} - y_\star), y + \tau \tilde{w}- y_\star} = \frac{1 - \varepsilon}{2} \norm{u_\tau}^2. 
\end{equation}

\emph{Step }$5$: For $\tau\leq\min\{\tau_\star^{(1)},\tau_\star^{(2)}\}$, we have
\begin{equation}\label{eq:20201120}
\frac{D_C(x_i)}{D_C(x)} \leq \frac{1+\varepsilon}{1-\varepsilon} 
\bigg( 1 - \frac{\norm{Q_i(x_\star) u_\tau}^2}{\norm{u_\tau}^2} \bigg)+ \frac{1 + \varepsilon}{2}\sqrt{\tau}.
\end{equation}
This follows from \eqref{eq:20201213c} when $HA_i^*\neq0$. However, \eqref{eq:20201120} holds even when 
$\range(HA_i^*)=\{0\}$. Indeed in such case, recalling the definition of $Q_i(x_\star)$, we have $Q_i(x_\star)\equiv 0$.
Hence, since $D_C(x_i)\leq D_C(x)$, we have that \eqref{eq:20201120} actually holds for every $\tau>0$.

\emph{Step }$6$: 
Note that inequality \eqref{eq:20201120} holds for all $i\in I$ and every $\tau\leq\min\{\tau_\star^{(1)},\tau_\star^{(2)}\}$.
Consequently, taking the infimum on both sides of \eqref{eq:20201120} we have
\begin{equation*}
\inf_{i\in I}\frac{D_C(x_i)}{D_C(x)} \leq \frac{1+\varepsilon}{1-\varepsilon} 
\bigg( 1 - \sup_{i \in I}\frac{\norm{Q_i(x_\star) u_\tau}^2}{\norm{u_\tau}^2} \bigg) + \frac{1 + \varepsilon}{2} \sqrt{\tau}.
\end{equation*}
Then \eqref{eq:20200930a} follows by recalling that $u_\tau \in V(x_\star)\!\setminus\!\{0\}$, and letting $\tau\to0$.
\end{proof}

\begin{lemma}\label{lm:sigma_I}
Under the same assumptions of Lemma~\ref{lm:gamma_I}, define the function 
$\sigma_{\C}\colon \inte(\dom\Leg)\to[0,1]$ such that, for every $x\in\inte(\dom\Leg)$,
\begin{equation}\label{eq:sigma_I}
\sigma_{\C}(x) = \begin{cases}
\displaystyle{\sup_{z\in K(x)\setminus C}}\left[ \inf_{i\in I}\frac{D_C(P_{C_i}(z))}{D_{C}(z)}\right]& \text{if } x\not\in C, \\
1-\gamma_{\C}(x) &  \text{if } x\in C,
\end{cases}
\end{equation}
where $K(x):=\left\{z\in\inte(\dom \Leg)\,\vert\, P_C(z)=P_C(x),\; D_C(z)\leq D_C(x) \right\}$. Then, 
\begin{enumerate}[label={\rm (\roman*)}] 
\item\label{lm:sigma_Ii} 
$(\forall\,x\in\inte(\dom\Leg))$
$\sigma_{\C}(x)<1$ and 
$\inf_{i\in I}D_C(P_{C_i}(x))\leq\sigma_{\C}(x) D_C(x)$;
\item\label{lm:sigma_Iii} 
$(\forall\, x,y\in\inte(\dom\Leg)\!\setminus\!C)$ $y \in K(x)\ \Rightarrow\ \sigma_{\C}(y)\leq\sigma_{\C}(x)$.
\end{enumerate}  
\end{lemma}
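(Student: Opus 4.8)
The plan is to separate the elementary claims from the one substantial point, namely $\sigma_{\C}(x)<1$ when $x\notin C$. The workhorse throughout is the identity from Lemma~\ref{lm:aff_proj_1} (applied with $C_1=C_i$ and $C_2=C$), which reads $D_C(P_{C_i}(z))+D_{C_i}(z)=D_C(z)$; in particular $0\le D_C(P_{C_i}(z))\le D_C(z)$ for all $i$ and all $z\in\inte(\dom\Leg)$, so every ratio appearing in \eqref{eq:sigma_I} lies in $[0,1]$ and hence $\sigma_{\C}(x)\le 1$ automatically. For the inequality $\inf_{i\in I}D_C(P_{C_i}(x))\le\sigma_{\C}(x)D_C(x)$ in \ref{lm:sigma_Ii}, the case $x\in C$ is trivial since then $P_{C_i}(x)=x$ and both sides vanish, whereas for $x\notin C$ the point $x$ itself belongs to $K(x)\setminus C$ (as $P_C(x)=P_C(x)$ and $D_C(x)\le D_C(x)$), so the defining supremum of $\sigma_{\C}(x)$ bounds $\inf_{i}D_C(P_{C_i}(x))/D_C(x)$ from above; multiplying through by $D_C(x)>0$ gives the claim. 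The strict bound for $x\in C$ is equally quick: there $\sigma_{\C}(x)=1-\gamma_{\C}(x)<1$ because Lemma~\ref{lm:gamma_I} provides $\gamma_{\C}(x)\in\left]0,1\right]$.

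For the remaining case $x\notin C$ I would set $x_\star=P_C(x)$ and first observe $K(x)\setminus C=K(x)\setminus\{x_\star\}$, since any $z\in C\cap K(x)$ must satisfy $z=P_C(z)=P_C(x)=x_\star$. The key structural fact is that $K(x)$ is compact: every $z\in K(x)$ has $D_C(z)=\Bre(x_\star,z)$, so $K(x)$ sits inside the sublevel set $\{z\,\vert\,\Bre(x_\star,z)\le D_C(x)\}$, which by Fact~\ref{prop:bregman_div}\ref{prop:bregman_div_ix} is a compact subset of $\inte(\dom\Leg)$, and $K(x)$ is relatively closed there because $P_C$ and $D_C$ are continuous (Fact~\ref{prop:projproperties}\ref{prop:projproperties_iv}). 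Writing $g(z)=\inf_{i\in I}D_C(P_{C_i}(z))/D_C(z)$, which is an infimum of continuous functions and therefore upper semicontinuous on $\inte(\dom\Leg)\setminus C$, we have $\sigma_{\C}(x)=\sup_{K(x)\setminus\{x_\star\}}g$. The strategy is to split this set by distance from $x_\star$. Since $\gamma_{\C}(x_\star)>0$, I would fix $\varepsilon\in\left]0,1\right[$ small enough that $\rho:=\frac{1+\varepsilon}{1-\varepsilon}\bigl(1-\gamma_{\C}(x_\star)\bigr)<1$ and take the associated $\delta>0$ from Lemma~\ref{lm:gamma_I}; then for every $z\in K(x)$ with $0<D_C(z)<\delta$, estimate \eqref{eq:20200930a} together with $P_C(z)=x_\star$ yields $g(z)\le\rho$.

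It then remains to control $g$ on the complementary piece $B=\{z\in K(x)\,\vert\,D_C(z)\ge\delta\}$, which is compact and does not contain $x_\star$. As $g$ is upper semicontinuous it attains its supremum on $B$ at some $z_0$, and I would show this value is $<1$ by contradiction: if it equalled $1$, then every ratio being $\le 1$ forces $D_C(P_{C_i}(z_0))=D_C(z_0)$ for all $i$, whence $D_{C_i}(z_0)=0$ for all $i$ by the identity of the first paragraph, i.e.\ $z_0\in\bigcap_{i\in I}C_i=C$; this contradicts $D_C(z_0)\ge\delta>0$. Therefore $\sup_B g<1$, and combining the two regions gives $\sigma_{\C}(x)\le\max\{\rho,\sup_B g\}<1$. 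I expect the genuine obstacle to be exactly the behaviour of $g$ as $z\to x_\star$, where a priori $g$ could tend to $1$; the role of the local estimate \eqref{eq:20200930a} is precisely to tame this limit, and the care lies in using it to get a bound uniform over a neighbourhood of $x_\star$ (not merely an asymptotic one) so that it can be glued to the compactness argument on $B$.

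Finally, \ref{lm:sigma_Iii} reduces to a set inclusion. If $x,y\in\inte(\dom\Leg)\setminus C$ and $y\in K(x)$, then $P_C(y)=P_C(x)$ and $D_C(y)\le D_C(x)$, so any $z\in K(y)$ satisfies $P_C(z)=P_C(y)=P_C(x)$ and $D_C(z)\le D_C(y)\le D_C(x)$, giving $K(y)\subseteq K(x)$ and hence $K(y)\setminus C\subseteq K(x)\setminus C$. Since $\sigma_{\C}(y)$ and $\sigma_{\C}(x)$ are the suprema of the same functional $g$ over these nested sets, I conclude $\sigma_{\C}(y)\le\sigma_{\C}(x)$.
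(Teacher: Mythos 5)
Your proof is correct and rests on the same two pillars as the paper's: compactness of $K(x)$ inside $\inte(\dom\Leg)$ and the local estimate \eqref{eq:20200930a} of Lemma~\ref{lm:gamma_I}, applied with the single constant $\gamma_{\C}(x_\star)$ by exploiting that $P_C\equiv x_\star$ on $K(x)$. The difference lies in how compactness is deployed. The paper covers all of $K(x)$ by open neighbourhoods --- around each $z\in K(x)\setminus C$ a set $U_z$ on which some fixed ratio $D_{C_j}/D_C$ stays bounded below by a positive $\gamma_z$, and around the single point $x_\star=K(x)\cap C$ a Bregman ball on which \eqref{eq:20200930a} applies --- then extracts a finite subcover and takes the maximum of finitely many constants $<1$. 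You instead split $K(x)\setminus C$ according to whether $D_C(z)<\delta$ or $D_C(z)\geq\delta$, treat the first region by \eqref{eq:20200930a} exactly as the paper does, and treat the compact remainder $B$ by noting that $g=\inf_{i}D_C(P_{C_i}(\cdot))/D_C(\cdot)$ is upper semicontinuous, attains its supremum on $B$, and cannot attain the value $1$ there, since $\inf_i D_C(P_{C_i}(z_0))=D_C(z_0)$ would force $D_{C_i}(z_0)=0$ for all $i$ and hence $z_0\in\bigcap_i C_i=C$ via the identity of Lemma~\ref{lm:aff_proj_1}. Your extremum-plus-contradiction step is a slightly more economical substitute for the paper's finite subcover, and both require exactly the continuity of $P_{C_i}$ and $D_C$ from Fact~\ref{prop:projproperties}\ref{prop:projproperties_iv}; likewise your closedness argument for $K(x)$ (via continuity of $P_C$ and $D_C$) differs harmlessly from the paper's (via Fact~\ref{prop:bregman_proj}\ref{prop:bregman_proj_0} and continuity of $\nabla\Leg$). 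Part \ref{lm:sigma_Iii} is identical to the paper's.
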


\begin{proof}
\ref{lm:sigma_Ii}: Clearly, by definition of $\sigma_{\C}$, for every $x\not\in C$ (since $x \in K(x)$), we have 
$\inf_{i\in I}D_C(P_{C_i}(x))\leq\sigma_{\C}(x) D_C(x)$, while for $x\in C$, the inequality is satisfied trivially. 
Next, let $x\in\inte(\dom\Leg)$ and let $x_\star = P_C(x)$. We prove that $\sigma_{\C}(x)<1$. According to
Lemma~\ref{lm:gamma_I} and definition \eqref{eq:sigma_I}, if $x\in C$, then $\sigma_{\C}(x)<1$. So, in the 
rest of the proof we assume $x\not\in C$. Let $z\in K(x)\setminus C$. Then $D_C(z)>0$ and, since 
$C = \cap_{i \in I} C_i$, there exists $j\in I$, such that $D_{C_j}(z)>0$. Now, let $\gamma_z \in \left]0,1\right]$ 
be such that $D_{C_j}(z) \geq \gamma_z D_C(z)$. Since, in virtue of 
Fact~\ref{prop:projproperties}\ref{prop:projproperties_iv}, $D_{C_j}$ and $D_C$ are continuous, there exists 
an open set $U_z\subset \inte(\dom\Leg)$ such that $z\in U_z$ and for all $\tilde z\in U_z$, 
$D_{C_j}(\tilde z) \geq \gamma_z D_C(\tilde z)$. Then, since $C \subset C_j$, by Lemma~\ref{lm:aff_proj_1}, 
we have $D_C(P_{C_j}(\tilde z))= D_C(\tilde z) - D_{C_j}(\tilde z)\leq (1-\gamma_z) D_C(\tilde z)$, and hence 
\begin{equation}
\label{eq:20201001a}
(\forall\, \tilde{z} \in U_z)\quad
\inf_{i\in I}D_{C}(P_{C_i}(\tilde z))\leq (1-\gamma_z) D_C(\tilde z),\quad\text{with}\ \ 1-\gamma_z<1.
\end{equation}
Next, let  $z\in K(x)\cap C$ and take $\varepsilon>0$ such that 
$(1+\varepsilon)(1-\varepsilon)^{-1}[1-\gamma_{\C}(x_\star)]<1$. Moreover, in virtue of 
Lemma \ref{lm:gamma_I}, there exists $\delta>0$ such that \eqref{eq:20200930a} holds. So, define 
$U_z:=\{ \tilde z \in \inte(\dom\Leg)\,\vert\, \Bre(z,\tilde z)<\delta \}$. Then,  for $\tilde z\in U_z$, since $z\in C$, 
we have that $D_C(\tilde z)\leq \Bre(z,\tilde z)<\delta$, and hence, by \eqref{eq:20200930a},
\begin{equation}
\label{eq:20201001b}
\inf_{i\in I} D_{C}(P_{C_i}(\tilde z))\leq \frac{1+\varepsilon}{1-\varepsilon}[1-\gamma_{\C}(P_C(\tilde z))]  D_C(\tilde z).
\end{equation}
Now, the set $\tilde K(x) = \left\{z\in\inte(\dom \Leg)\,\vert\,\; \Bre(P_C(x),z)\leq D_C(x) \right\}$ is 
compact by Fact \ref{prop:bregman_div}\ref{prop:bregman_div_ix}. Moreover, by 
Fact~\ref{prop:bregman_proj}\ref{prop:bregman_proj_0}, 
$K(x) =\{z\in\tilde K(x)\,\vert\, \nabla\Leg(z)-\nabla\Leg(x)\in\range(A^*)\}$. Thus, it follows from the continuity 
of $\nabla\Leg$ that $K(x)$ is a closed subset of the compact set $\tilde K(x)$, and hence it is compact. Thus, 
since $\bigcup_{z\in K(x)} U_z$ is an open covering of $K(x)$, there exist points $z_1,\ldots,z_m\in K(x)$ 
so that  $K(x)\subset U_{z_1} \cup\dots\cup U_{z_m}$, and we can define  
\begin{equation}
\sigma = \max\big\{1 - \gamma_{z_1}, \dots, 1 - \gamma_{z_m}, (1+\varepsilon)(1-\varepsilon)^{-1}[1-\gamma_{\C}(P_C(x_\star))\big\}<1.
\end{equation}
We let $z\in K(x)\setminus C$. Then, there exists $j\in\{1,\ldots,m\}$, such that $z\in U_{z_j}$. If $z_j\not\in C$, 
then we derive from \eqref{eq:20201001a} that $\inf_{i\in I} D_{C}(P_{C_i}(z))/D_C(z)\leq 1-\gamma_{z_j}\leq \sigma$,
while if $z_j\in C$, since $P_C(z)=x_\star$, by \eqref{eq:20201001b} we have 
$\inf_{i\in I} D_{C}(P_{C_i}(z))/D_C( z)\leq (1+\varepsilon)(1-\varepsilon)^{-1}[1-\gamma_{\C}(x_\star)]\leq \sigma$.
Therefore, 
\begin{equation}
\sigma_{\C}(x)=\displaystyle{\sup_{z\in K(x)\setminus C}}\left[ \inf_{i\in I}\frac{D_C(P_{C_i}(z))}{D_{C}(z)}\right]\leq\sigma<1.
\end{equation}

\ref{lm:sigma_Iii}: 
Let $x,y\in\inte(\dom\Leg)\setminus C$ be such that $P_C(y)=P_C(x)$ and $D_C(y)\leq D_C(x)$. Then, clearly 
$K(y) \subset K(x)$ and hence $K(y)\setminus C\subset K(x)\setminus C$. Then, since $x,y \notin C$, the 
statement follows from the definition of $\sigma_{\C}$ in \eqref{eq:sigma_I}.
\end{proof}

\begin{theorem}[Greedy set control scheme]
\label{thm:main2}
With reference to Problem~\ref{P1}, suppose that \ref{eq:stand1}, \ref{eq:AS3}, and \ref{eq:AS3a} hold. 
Let $(x_k)_{k \in \N}$ be generated by Algorithm~\ref{algoABP} with the control \ref{eq:C1}  and
let $x_\star = P_{C}(x_0)$. Then the following hold.
\begin{enumerate}[label={\rm (\roman*)}] 
\item\label{thm:main2_0} $(\forall\, k \in \N)\quad D_C(x_{k+1}) \leq \sigma_{\C}(x_{k})\, D_C(x_{k})$.
\item\label{thm:main2_i}  
$(\forall\, n \in \N)(\forall\, k \in \N)\quad D_C(x_{k+1+n})\leq \sigma_{\C}(x_n) D_C(x_{k+n})$.
\item\label{thm:main2_iii} 
Either $D_C(x_k) \to 0$ in a finite number of iterations or the sequence $(\sigma_\C(x_k))_{k \in \N}$ is decreasing and
\begin{equation*}
\displaystyle{\limsup_{k\to\infty}\frac{D_C(x_{k+1})}{D_C(x_{k})} \leq \lim_{k\to\infty}\sigma_{\C}(x_{k}) \leq \sigma_{\C}(x_\star)}.\end{equation*}
\end{enumerate}
\end{theorem}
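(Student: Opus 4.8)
The plan is to establish the three items in order, leaning on the two technical lemmas just proved together with Theorem~\ref{thm3}.

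For \ref{thm:main2_0} the key observation is that under the greedy rule \ref{eq:C1} the index $\xi_k$ that maximizes $D_{C_i}(x_k) = \Bre(P_{C_i}(x_k),x_k)$ is exactly the one that \emph{minimizes} $D_C(P_{C_i}(x_k))$. Indeed, Lemma~\ref{lm:aff_proj_1} applied with $C \subset C_i$ gives $D_C(P_{C_i}(x_k)) = D_C(x_k) - D_{C_i}(x_k)$, so maximizing $D_{C_i}(x_k)$ is the same as minimizing $D_C(P_{C_i}(x_k))$. Hence $D_C(x_{k+1}) = D_C(P_{C_{\xi_k}}(x_k)) = \inf_{i\in I} D_C(P_{C_i}(x_k))$, and the desired bound $D_C(x_{k+1}) \leq \sigma_{\C}(x_k)\,D_C(x_k)$ follows immediately from Lemma~\ref{lm:sigma_I}\ref{lm:sigma_Ii}.

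For \ref{thm:main2_i} I would combine \ref{thm:main2_0} at index $k+n$ with the monotonicity of $\sigma_{\C}$ along the iterates. By Theorem~\ref{thm3}\ref{thm3_i} every iterate projects to the same point, $P_C(x_{k+n}) = x_\star = P_C(x_n)$, while $D$-Fej\'er monotonicity (Proposition~\ref{prop:ABP}\ref{prop:ABP_i} together with Proposition~\ref{prop:DFejer}\ref{prop:DFejer_ib}) gives $D_C(x_{k+n}) \leq D_C(x_n)$; thus $x_{k+n} \in K(x_n)$. When both points lie outside $C$, Lemma~\ref{lm:sigma_I}\ref{lm:sigma_Iii} yields $\sigma_{\C}(x_{k+n}) \leq \sigma_{\C}(x_n)$, and chaining with \ref{thm:main2_0} gives $D_C(x_{k+1+n}) \leq \sigma_{\C}(x_{k+n})\,D_C(x_{k+n}) \leq \sigma_{\C}(x_n)\,D_C(x_{k+n})$. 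The cases in which some iterate already lies in $C$ are trivial, since then $D_C$ vanishes from that index on.

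For \ref{thm:main2_iii} I would split on whether $D_C(x_{k_0}) = 0$ for some $k_0$ (finite termination, the first alternative) or $D_C(x_k) > 0$ for all $k$. In the latter case, taking consecutive indices in the argument used for \ref{thm:main2_i} shows $x_{k+1} \in K(x_k)$, so $\sigma_{\C}(x_{k+1}) \leq \sigma_{\C}(x_k)$ by Lemma~\ref{lm:sigma_I}\ref{lm:sigma_Iii}; the sequence $(\sigma_{\C}(x_k))_{k\in\N}$ is therefore decreasing and, being bounded below, converges to some limit $\ell$. Dividing \ref{thm:main2_0} by $D_C(x_k)>0$ and passing to the limit gives $\limsup_k D_C(x_{k+1})/D_C(x_k) \leq \ell$. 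It remains to prove $\ell \leq \sigma_{\C}(x_\star)$, and this is the main obstacle. I would first note that iterating \ref{thm:main2_0} with $\sigma_{\C}(x_k) \leq \sigma_{\C}(x_0) < 1$ forces $D_C(x_k) \to 0$ geometrically (no appeal to \ref{eq:SC} is needed here). Then, fixing $\varepsilon \in \left]0,1\right[$, Lemma~\ref{lm:gamma_I} furnishes $\delta>0$ such that every $z$ with $D_C(z)<\delta$ obeys the local estimate \eqref{eq:20200930a}, namely $\inf_{i\in I} D_C(P_{C_i}(z)) \leq \frac{1+\varepsilon}{1-\varepsilon}[1-\gamma_{\C}(P_C(z))]\,D_C(z)$. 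The crucial point is that once $D_C(x_k)<\delta$, \emph{every} $z \in K(x_k)$ has $P_C(z) = x_\star$ and $D_C(z) \leq D_C(x_k) < \delta$, so the single local rate $1-\gamma_{\C}(x_\star)$ controls the entire slice $K(x_k)$; taking the supremum over $z \in K(x_k)\setminus C$ in the definition \eqref{eq:sigma_I} yields $\sigma_{\C}(x_k) \leq \frac{1+\varepsilon}{1-\varepsilon}[1-\gamma_{\C}(x_\star)]$ for all large $k$. Letting $k \to \infty$ and then $\varepsilon \to 0$ gives $\ell \leq 1 - \gamma_{\C}(x_\star) = \sigma_{\C}(x_\star)$, completing the proof.
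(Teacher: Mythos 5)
Your proposal is correct and follows essentially the same route as the paper: item (i) via the identity $D_C(P_{C_i}(x_k)) = D_C(x_k) - D_{C_i}(x_k)$ from Lemma~\ref{lm:aff_proj_1} combined with Lemma~\ref{lm:sigma_I}\ref{lm:sigma_Ii}, item (ii) via $x_{k+n}\in K(x_n)$ and Lemma~\ref{lm:sigma_I}\ref{lm:sigma_Iii}, and item (iii) via the geometric decay $D_C(x_k)\to 0$ followed by the local estimate \eqref{eq:20200930a} applied uniformly on the slice $K(x_k)$ to bound $\sigma_{\C}(x_k)$ by $\tfrac{1+\varepsilon}{1-\varepsilon}\sigma_{\C}(x_\star)$. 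The paper's proof of (iii) is the same argument, merely phrased as "$D_C(x_k)\to 0$ by (ii)" where you spell out the geometric iteration explicitly.
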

\begin{proof}
\ref{thm:main2_0}:
Let $k \in \N$. By Theorem \ref{thm3}\ref{thm3_i}, we have that 
\begin{equation}
\label{eq:20201007g}
D_C(x_{k+1}) = D_C(x_{k})-D_{C_{\xi_k}}(x_{k}), 
\end{equation}
where the greedy choice ensures that $D_{C_{\xi_{k}}}(x_{k})=\sup_{i\in I} D_{C_{i}}(x_{k})$. Thus, 
$D_C(x_{k+1}) = \inf_{i\in I} (D_C(x_{k})-D_{C_{i}}(x_{k}))$ and hence, by Lemma \ref{lm:aff_proj_1},
\begin{equation}
D_C(x_{k+1})= \inf_{i\in I}D_C(P_{C_i}(x_{k})). 
\end{equation}
So, the statement follows from Lemma~\ref{lm:sigma_I}\ref{lm:sigma_Ii}.

\ref{thm:main2_i}: 
First note that Theorem~\ref{thm3}\ref{thm3_i} yields that $(D_C(x_k))_{k \in \N}$ is decreasing and that 
$(P_{C}(x_{k}))_{k \in \N} \equiv x_\star$. Let $n,k \in \N$. If $D_C(x_{k+1+n})=0$, then the inequality holds.
Suppose that $D_C(x_{k+1+n})>0$. Then, $0<D_C(x_{k+1+n}) \leq D_C(x_{k+n}) \leq D_C(x_n)$
and hence $x_n, x_{k+n} \notin C$ and, using the notation of Lemma \ref{lm:sigma_I}, $x_{k+n} \in K(x_n)$. 
So, by Lemma \ref{lm:sigma_I}\ref{lm:sigma_Iii}, we have $\sigma_\C(x_{k+n}) \leq \sigma_\C(x_n)$ and the 
statement follows from \ref{thm:main2_0}.

\ref{thm:main2_iii}: 
If $D_C(x_n)=0$ for some $n \in \N$, since $(D_C(x_k))_{k \in \N}$ is decreasing, then, for every integer 
$k \geq n$, $D_C(x_k)=0$ and hence $D_C(x_k) \to 0$ in a finite number of iterations. Suppose that for every 
$k \in \N$, $D_C(x_k)>0$, that is, $x_k \notin C$. Then $x_{k+1} \in K(x_k)$ and hence, by 
Lemma \ref{lm:sigma_I}\ref{lm:sigma_Iii}, $\sigma_{\C}(x_{k+1}) \leq \sigma_\C(x_k)$. Moreover,
according to Lemma \ref{lm:gamma_I}, given $\varepsilon\in\left]0,1\right[$ there exists $\delta>0$ be such 
that \eqref{eq:20200930a} holds. Since, by \ref{thm:main2_i}, $D_C(x_k)\to 0$, there exists $n \in \N$ such 
that for all integer $k \geq n$, $D_C(x_{k})<\delta$, and hence for every $z \in K(x_k)\setminus C$, since 
$D_C(z) \leq D_C(x_k) \leq \delta$, \eqref{eq:20200930a} yields
\begin{equation*}
\inf_{i\in I} \frac{D_C(P_{C_i}(z))}{D_{C}(z)}\leq \frac{1+\varepsilon}{1-\varepsilon} [1-\gamma_{\C}(x_\star)]
\end{equation*}
and, recalling \eqref{eq:sigma_I}, $\sigma_{\C}(x_{k})\leq (1+\varepsilon)(1-\varepsilon)^{-1}\sigma_{\C}(x_\star)$. 
Therefore, using also \ref{thm:main2_0},
\begin{equation}
\displaystyle{\limsup_{k\to\infty}\frac{D_C(x_{k+1})}{D_C(x_{k})} \leq \lim_{k\to\infty}\sigma_{\C}(x_{k}) = \inf_{k\in \N}\sigma_{\C}(x_{k}) \leq \frac{1+\varepsilon}{1-\varepsilon}\sigma_{\C}(x_\star)}.
\end{equation}
Since $\varepsilon$ is arbitrary in $\left]0,1\right[$, the statement follows.
\end{proof}

\subsection{Random set control scheme}
We now address Problem~\ref{P2}. Following the same line of presentation as in the greedy case, we first 
give a general theorem of convergence and then we analyze the rate of convergence.

\begin{theorem}
\label{thm3b}
With reference to Problem~\ref{P2}, suppose that \ref{eq:stand1} and \ref{eq:AS3} hold. Let $(x_k)_{k \in \N}$ 
be generated by Algorithm~\ref{algoABP} using the random set control scheme \ref{eq:C2}  and 
let $x_\star = P_{C}(x_0)$.
Then the following hold.

\begin{enumerate}[label={\rm (\roman*)}]
\item\label{prop_randomaffine_i}
$(\forall\, k \in \N)\ \nabla \Leg(x_\star) - \nabla \Leg(x_k) \in \range(A^*)$ $\PP$-a.s.
\item\label{prop_randomaffine_ii}
$(\forall\, k \in \N)\ P_C(x_k)= x_\star\ $ $\PP$-a.s.~and $\ D_C(x_{k+1}) + D_{C_{\xi_k}}(x_{k}) = D_C(x_{k})$ 
$\PP$-a.s.
\item\label{prop_randomaffine_iii}  
Under the assumptions of Theorem~\ref{thm:conv2}, $x_k \to\! x_\star$ $\PP$-a.s.~and 
$\EE[\norm{x_k \!-\! x_\star}^2]\to \!0$.
\end{enumerate}
\end{theorem}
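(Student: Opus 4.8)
The plan is to obtain parts \ref{prop_randomaffine_i} and \ref{prop_randomaffine_ii} as the almost-sure (pathwise) transcription of the deterministic Theorem~\ref{thm3}, and then to derive \ref{prop_randomaffine_iii} by combining the abstract convergence already established in Theorem~\ref{thm:conv2}\ref{thm:conv2_v} with the affine structure exploited in \ref{prop_randomaffine_ii}. The only genuinely stochastic ingredient is a careful handling of null sets, for which the key tool is Proposition~\ref{prop:stochasticintersection2}\ref{prop:stochasticintersection2_ii}: it furnishes a single $\PP$-negligible set $N$ such that $C = \bigcap_{\omega \in \Omega \setminus N} C_{\xi_k(\omega)}$ simultaneously for every $k \in \N$. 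Fixing $\omega \notin N$ therefore guarantees the containment $C \subset C_{\xi_k(\omega)}$ for all $k$, which is exactly the hypothesis under which the deterministic arguments operate.

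For \ref{prop_randomaffine_i}, I would fix $\omega \notin N$ and run the computation in the proof of Theorem~\ref{thm3}\ref{thm3_0} along the realized control sequence $(\xi_k(\omega))_{k \in \N}$. From $C \subset C_{\xi_k(\omega)}$ one gets $\Ker(A) \subset \Ker(A_{\xi_k(\omega)})$, hence $\range(A_{\xi_k(\omega)}^*) \subset \range(A^*)$; the KKT relation \eqref{eq:KKT} applied to each Bregman projection then gives $\nabla\Leg(x_{k+1}(\omega)) - \nabla\Leg(x_k(\omega)) \in \range(A_{\xi_k(\omega)}^*) \subset \range(A^*)$, and telescoping together with $\nabla\Leg(x_\star) - \nabla\Leg(x_0) \in \range(A^*)$ yields the claim. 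Part \ref{prop_randomaffine_ii} is the same pathwise argument built on Lemma~\ref{lm:aff_proj_1}: for $\omega \notin N$ and each $k$, the inclusion $C \subset C_{\xi_k(\omega)}$ gives $P_C(x_k(\omega)) = P_C(x_{k+1}(\omega))$ and $D_C(x_{k+1}(\omega)) + D_{C_{\xi_k(\omega)}}(x_k(\omega)) = D_C(x_k(\omega))$, exactly as in Theorem~\ref{thm3}\ref{thm3_i}; iterating the projection identity back to $k=0$ and recalling $x_\star = P_C(x_0)$ shows $P_C(x_k(\omega)) = x_\star$ for all $k$.

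For \ref{prop_randomaffine_iii}, I would invoke Theorem~\ref{thm:conv2}\ref{thm:conv2_v} (whose hypotheses, notably \ref{eq:SC}, are exactly those assumed here): it already produces a random variable $\hat{x}$ with $\hat{x} \in \inte(\dom\Leg) \cap C$ $\PP$-a.s., $x_k \to \hat{x}$ $\PP$-a.s., and $\EE[\norm{x_k - \hat{x}}^2] \to 0$. It remains only to identify $\hat{x} = x_\star$ almost surely, and here the affine structure enters: on a full-measure event the continuity of $P_C$ on $\inte(\dom\Leg)$ (Fact~\ref{prop:projproperties}\ref{prop:projproperties_iv}) gives $P_C(x_k) \to P_C(\hat{x})$, while $\hat{x} \in C$ forces $P_C(\hat{x}) = \hat{x}$ by Fact~\ref{prop:projproperties}\ref{prop:projproperties_0}; but part \ref{prop_randomaffine_ii} pins $P_C(x_k) = x_\star$ for every $k$ a.s., so passing to the limit gives $\hat{x} = x_\star$ $\PP$-a.s. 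Substituting this into the convergence statements of Theorem~\ref{thm:conv2}\ref{thm:conv2_v} finishes the proof.

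The argument is essentially a deterministic one executed pathwise, so I do not expect a hard analytic obstacle; the main care point is measure-theoretic. Specifically, I must ensure that all the pathwise identities in \ref{prop_randomaffine_i}--\ref{prop_randomaffine_ii} hold off one common $\PP$-null set rather than off a $k$-dependent family of null sets, which is precisely what the ``simultaneous in $k$'' form of Proposition~\ref{prop:stochasticintersection2}\ref{prop:stochasticintersection2_ii} secures, and that the identification $\hat{x} = x_\star$ in \ref{prop_randomaffine_iii} is carried out on the intersection of the full-measure event of Theorem~\ref{thm:conv2}\ref{thm:conv2_v} with the event from \ref{prop_randomaffine_ii}.
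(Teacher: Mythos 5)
Your proposal is correct and follows essentially the same route as the paper's proof: parts (i) and (ii) are obtained by running the deterministic arguments of Theorem~\ref{thm3} pathwise off the single $\PP$-null set supplied by Proposition~\ref{prop:stochasticintersection2}\ref{prop:stochasticintersection2_ii}, and part (iii) identifies the a.s.~limit $\hat{x}$ from Theorem~\ref{thm:conv2} with $x_\star$ via the continuity of $P_C$ and the identity $P_C(x_k)=x_\star$ from (ii). No gaps.
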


\begin{proof}
It follows from Proposition~\ref{prop:stochasticintersection2}\ref{prop:stochasticintersection2_ii} that 
there exists a $\PP$-negligible set $N \subset \Omega$ such that, for every $k \in \N$,
$C = \bigcap_{\omega \in \Omega\setminus N} C_{\xi_k(\omega)}$. 
Let $\omega \in \Omega\setminus N$ and $k \in \N$. 

\ref{prop_randomaffine_i}:
Since $x_{k+1}(\omega) = P_{C_{\xi_k(\omega)}}(x_{k}(\omega))$
and $C \subset C_{\xi_k(\omega)}$,  $\range(A^*_{\xi_k(\omega)}) \subset \range(A^*)$
and, proceeding as in the proof of Theorem~\ref{thm3}\ref{thm3_0}, one gets
$\nabla \Leg(x_\star) - \nabla \Leg(x_k(\omega)) \in \range(A^*)$.

\ref{prop_randomaffine_ii}: 
As in the proof of Theorem~\ref{thm3}\ref{thm3_i},
since $x_{k+1}(\omega) = P_{C_{\xi_k(\omega)}}(x_{k}(\omega))$ and $C \subset C_{\xi_k(\omega)}$,
it follows from Lemma~\ref{lm:aff_proj_1} that
\begin{equation*}
P_C(x_{k}(\omega)) = P_C(x_{k+1}(\omega))\ \text{and}\ 
D_C(x_{k+1}(\omega)) + D_{C_{\xi_k(\omega)}}(x_{k}(\omega)) = D_C(x_{k}(\omega)).
\end{equation*}
The statement follows.

\ref{prop_randomaffine_iii}:
By Theorem~\ref{thm:conv2} we have $x_k \to \hat{x}$ $\PP$-a.s.~for some 
$\hat{x} \in C\cap \inte(\dom \Leg)$ $\PP$-a.s. Since $P_C$ is continuous on $\inte(\dom \Leg)$, 
we have that $P_C(x_k) \to P_C(\hat{x}) = \hat{x}$ $\PP$-a.s. However, item \ref{thm3_i} yields that 
$P_C(x_k) = x_\star$ $\PP$-a.s., for every $k \in \N$. Therefore, $\hat{x} =  x_\star$ $\PP$-a.s. 
Finally, convergence in mean square follows by Fact~\ref{fact_convRV}.
\end{proof}

As before, we give two technical lemmas followed by the theorem on linear convergence.

\begin{lemma}\label{lm:gamma_mu}
Under the assumptions of Problem~\ref{P2}, suppose  that \ref{eq:stand1}, \ref{eq:AS3}, and \ref{eq:AS3b} hold. 
Let, for every $x\in\inte(\dom\Leg)\cap C$, $Q_i(x)$ be the orthogonal projection onto 
$\range([\nabla^2 \Leg^*(\nabla\Leg(x))]^{1/2} A_i^*)$, $V(x)=\range([\nabla^2 \Leg^*(\nabla\Leg(x))]^{1/2} A^*)$, 
let $\overline{Q}(x) := \EE[Q_\xi(x)]$, and let  
\begin{equation}
 \label{eq:main2_mu}
 \gamma_{\C,\mu}(x) 
 = \inf_{v\in V(x)\setminus\{0\}}
 \frac{\scalarp{\overline{Q}(x) v,v}}{ \norm{v}^2}.
\end{equation}

Then the following hold.

\begin{enumerate}[label={\rm (\roman*)}] 
\item\label{lm:gamma_mu_i} 
For every $x\in\inte(\dom\Leg)\cap C$, $V(x)=\Ker(\overline{Q}(x))^\perp$, hence, 
$\gamma_{\C,\mu}(x)\in\left]0,1\right]$ is the smallest nonzero eigenvalue of $\overline{Q}(x)$. 
\item\label{lm:gamma_mu_ii} 
For every $\varepsilon\in\left]0,1\right[$ there exists $\delta>0$ such that for every $x\in\inte(\dom\Leg)$
\begin{equation}\label{eq:20201005b}
D_C(x)< \delta \implies\ \EE[D_C(P_{C_\xi}(x))] 
\leq \frac{1+\varepsilon}{1-\varepsilon} [1-\gamma_{\C,\mu}(P_C(x))] \,D_{C}(x).
\end{equation}
\end{enumerate}
\end{lemma}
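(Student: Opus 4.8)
The plan is to mirror the structure of the proof of Lemma~\ref{lm:gamma_I}, replacing the greedy $\sup_{i\in I}$ by an expectation over $\xi$ and the scalar $\gamma_{\C}$ by $\gamma_{\C,\mu}$. Throughout I write $H=[\nabla^2\Leg^*(\nabla\Leg(x))]^{1/2}$, so that $Q_i(x)$ is the orthogonal projector onto $\range(HA_i^*)$ and $\scalarp{Q_i(x)v,v}=\norm{Q_i(x)v}^2$. Since each $Q_i(x)$ is a projector, $0\preceq Q_i(x)\preceq I$, hence $i\mapsto Q_i(x)$ is bounded (and measurable under the standing measurability hypotheses on $i\mapsto A_i$), so $\overline{Q}(x)=\EE[Q_\xi(x)]$ is a well-defined symmetric operator with $0\preceq\overline{Q}(x)\preceq I$.

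For \ref{lm:gamma_mu_i} I would first record the measure-theoretic identity $\Ker(A)=\{v\in\Hh\,\vert\, A_\xi v=0\ \PP\text{-a.s.}\}$. This comes directly from the definition of $C$ in Problem~\ref{P2}: fixing $x_0\in C$ (nonempty by \ref{eq:stand1}), for $v\in\Hh$ one has $x_0+v\in C$ iff $A_\xi v=0$ a.s.\ (using $A_\xi x_0=b_\xi$ a.s.), while $x_0+v\in C=\{x\,\vert\, Ax=b\}$ iff $Av=0$; equating gives the identity. Substituting $v\mapsto Hv$ and using $H=H^*$ (so $(HA_i^*)^*=A_iH$) yields $\Ker(AH)=\{v\,\vert\, A_\xi Hv=0\ \text{a.s.}\}$. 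Now $Q_\xi(x)v=0$ iff $v\in\range(HA_\xi^*)^\perp=\Ker(A_\xi H)$, and since $\overline{Q}(x)\succeq0$ we have $\scalarp{\overline{Q}(x)v,v}=\EE[\norm{Q_\xi(x)v}^2]=0$ iff $\overline{Q}(x)v=0$; combining these I obtain $\Ker(\overline{Q}(x))=\{v\,\vert\, A_\xi Hv=0\ \text{a.s.}\}=\Ker(AH)=\range(HA^*)^\perp=V(x)^\perp$, i.e.\ $V(x)=\Ker(\overline{Q}(x))^\perp=\range(\overline{Q}(x))$. Consequently the Rayleigh quotient $\scalarp{\overline{Q}(x)v,v}/\norm{v}^2$ over $v\in V(x)\setminus\{0\}$ attains its infimum at the smallest eigenvalue of $\overline{Q}(x)$ restricted to its range, i.e.\ the smallest nonzero eigenvalue; it is positive because $\overline{Q}(x)$ is positive definite on $V(x)$, and $\leq1$ because $\overline{Q}(x)\preceq I$. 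That $V(x)\neq\{0\}$ follows from \ref{eq:AS3b} exactly as in Lemma~\ref{lm:gamma_I}.

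For \ref{lm:gamma_mu_ii} I would reuse Steps~1--5 of the proof of Lemma~\ref{lm:gamma_I} verbatim, applied at $x_\star=P_C(x)$. Those steps are entirely pointwise in the index and produce, for $\mu$-a.e.\ $i$ and every $\tau\leq\min\{\tau_\star^{(1)},\tau_\star^{(2)}\}$, the bound \eqref{eq:20201120}: $D_C(P_{C_i}(x))/D_C(x)\leq\frac{1+\varepsilon}{1-\varepsilon}\big(1-\norm{Q_i(x_\star)u_\tau}^2/\norm{u_\tau}^2\big)+\frac{1+\varepsilon}{2}\sqrt{\tau}$, with $u_\tau\in V(x_\star)\setminus\{0\}$. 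The one point requiring care is \emph{uniformity}: the thresholds $\tilde\delta,\beta_\star,\delta,\tau_\star^{(1)},\tau_\star^{(2)}$ there depend on $x_\star$ and $\varepsilon$ but not on the index, and the constant $M$ bounding $\norm{A_i^*(A_iH^2A_i^*)^\dagger A_i}$ is now furnished $\mu$-a.s.\ by \ref{eq:AS3b} in place of \ref{eq:AS3a}; hence \eqref{eq:20201120} holds for $\mu$-a.e.\ $i$ with a \emph{common} admissible $\tau$.

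Finally I would integrate \eqref{eq:20201120} in $\xi$. Both sides are measurable in the index and bounded: $D_C(P_{C_i}(x))=D_C(x)-D_{C_i}(x)$ by Lemma~\ref{lm:aff_proj_1}, bounded by $D_C(x)$, while $\norm{Q_i(x_\star)u_\tau}^2=\scalarp{Q_i(x_\star)u_\tau,u_\tau}$ is bounded by $\norm{u_\tau}^2$. Taking $\EE[\cdot]$ and using $\EE[\norm{Q_\xi(x_\star)u_\tau}^2]=\scalarp{\overline{Q}(x_\star)u_\tau,u_\tau}$ gives $\EE[D_C(P_{C_\xi}(x))]/D_C(x)\leq\frac{1+\varepsilon}{1-\varepsilon}\big(1-\scalarp{\overline{Q}(x_\star)u_\tau,u_\tau}/\norm{u_\tau}^2\big)+\frac{1+\varepsilon}{2}\sqrt{\tau}$. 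Since $u_\tau\in V(x_\star)\setminus\{0\}$, part~\ref{lm:gamma_mu_i} yields $\scalarp{\overline{Q}(x_\star)u_\tau,u_\tau}/\norm{u_\tau}^2\geq\gamma_{\C,\mu}(x_\star)$, and letting $\tau\to0$ produces \eqref{eq:20201005b}. The main obstacle is not analytic, since the sharp estimates are inherited from Lemma~\ref{lm:gamma_I}; it lies in (a) the measure-theoretic identity of part~\ref{lm:gamma_mu_i} turning $V(x)^\perp$ into $\Ker(\overline{Q}(x))$ via the definition of $C$ in Problem~\ref{P2}, and (b) checking that the pointwise bounds of Steps~1--5 hold with index-independent thresholds, so that the expectation can be taken under a single $\tau$ before passing to the limit.
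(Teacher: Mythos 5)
Your proposal is correct and follows essentially the same route as the paper: part \ref{lm:gamma_mu_i} via the identification $\Ker(\overline{Q}(x))=\Ker(A_\xi H)\ \PP$-a.s.$\,=\Ker(AH)=V(x)^\perp$ (the paper derives $\Ker(A)=\Ker(A_\xi)$ a.s.\ from $C=\Ker(A)+x=\Ker(A_\xi)+x$ a.s., which is your observation phrased at the point $x$ itself), and part \ref{lm:gamma_mu_ii} by rerunning Steps 1--5 of Lemma~\ref{lm:gamma_I} with index-independent thresholds, taking the expectation of \eqref{eq:20201120}, and letting $\tau\to0$. Your explicit attention to the uniformity of $\tau$ and the a.s.\ bound $M$ from \ref{eq:AS3b} is exactly the point the paper's proof relies on when it says the estimate holds with $\tau$ ``independent on $\xi(\omega)$''.
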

\begin{proof}
\ref{lm:gamma_mu_i}: 
Let $x\in\inte(\dom\Leg)\cap C$ and set $y=\nabla\Leg(x)$ and $H = [\nabla^2 \Leg^*(y)]^{1/2}$. 
We prove that $\Ker(\overline{Q}(x)) =\range(H A^*)^\perp$. Since $\scalarp{\overline{Q}(x)v,v} = 0$ 
$\ \Leftrightarrow\ \scalarp{Q_\xi(x)v,v}=0$ $\PP$-a.s., we have 
\begin{equation}
v \in \Ker(\overline{Q}(x))\ \Leftrightarrow\ v \in \Ker(Q_\xi(x))\ \PP\text{-a.s.}
\end{equation}
Moreover, $\Ker(Q_i(x)) = \range(Q_i(x))^\perp = \range(H A_i^*)^\perp = \Ker(A_i H)$. Therefore, 
$v \in \Ker(\overline{Q}(x))\ \Leftrightarrow\ H v \in \Ker(A_\xi)$ $\PP$-a.s. Now, we observe that, since 
$x \in C$, we have $x \in C_\xi$ $\PP$-a.s., and, hence $C = \Ker(A) + x$ and $C_\xi = \Ker(A_\xi) + x$ 
$\PP$-a.s.~Thus, recalling the definition of $C$ in Problem~\ref{P2},
\begin{equation*}
(\forall\, u \in X)\ u \in \Ker(A)\ \Leftrightarrow\ u+x \in C \ \Leftrightarrow\ u+x \in C_\xi\ \PP\text{-a.s.}
\ \Leftrightarrow\ u \in \Ker(A_\xi)\ \PP\text{-a.s.}
\end{equation*}
Therefore, $v \in \Ker(\overline{Q}(x))\ \Leftrightarrow\ H v \in \Ker(A_\xi)$ $\PP$-a.s. $\Leftrightarrow$ 
$H v \in \Ker(A)\ \Leftrightarrow\ v \in \Ker(A H) = \range(H A^*)^\perp$. The first part of the statement follows.
Now, in view of what we have just proved, we note that
\begin{equation}
\gamma_{\C,\mu}(x) = \min_{\substack{v\in\Ker(\overline{Q}(x))^\perp\\ \norm{v}= 1}}
 \scalarp{\overline{Q}(x) v,v}.
\end{equation}
Since, for every $i\in I$, $Q_i(x)$ is self-adjoint and positive, it follows from linearity of expectation that 
$\overline{Q}(x)$ is self-adjoint and positive, too. Hence, in virtue of Fact \ref{fact_inf}, $\gamma_{\C,\mu}(x)>0$ 
is smallest positive eigenvalue of $\overline{Q}(x)$.

\ref{lm:gamma_mu_ii}:
It follows from Proposition~\ref{prop:stochasticintersection2}\ref{prop:stochasticintersection2_ii}
that there exists a $\PP$-negligible set $N \subset \Omega$ such that 
$C = \bigcap_{\omega \in \Omega\setminus N} C_{\xi(\omega)}$.
Note that, if $x\in\inte(\dom\Leg)\cap C$, then $0=D_C(x)=D_C(P_{C_{\xi(\omega)}}(x))$, for every 
$\omega \in \Omega\setminus N$, hence \eqref{eq:20201005b} holds trivially. Therefore, we let 
$x\in\inte(\dom\Leg) \setminus C$ and let $x_\star=P_C(x)$, $y = \nabla\Leg(x)$, $y_\star = \nabla \Leg(x_\star)$. 
Now, let $\omega \in \Omega\setminus N$ and set $x_{\xi(\omega)} = P_{C_{\xi(\omega)}}(x)$ and 
$y_{\xi(\omega)} = \nabla \Leg(x_{\xi(\omega)})$. Then, following  the same reasoning as in Lemma \ref{lm:gamma_I}, 
we obtain (similarly to \eqref{eq:20201120}) that for every $\varepsilon\in\left]0,1\right[$ there exists $\delta>0$ 
such that if $D_C(x)<\delta$, then 
\begin{equation*}
\frac{D_C(P_{C_{\xi(\omega)}}(x))}{D_C(x)} \leq \frac{1+\varepsilon}{1-\varepsilon} 
\bigg( 1 - \frac{\norm{Q_{\xi(\omega)}(x_\star) u_\tau}^2}{\norm{u_\tau}^2} \bigg)+ \frac{1+\varepsilon}{2}\sqrt{\tau},
\end{equation*}
where $u_\tau = [\nabla^2 \Leg^*(y_\star)]^{1/2}(y_\star - y + \tau \tilde{w}) \in V(x_\star) \setminus\{0\}$ and $\tau>0$  
is small enough and independent on $\xi(\omega)$. The above inequality implies that 
\begin{equation*}
\frac{D_C(P_{C_\xi}(x))}{D_C(x)} \leq \frac{1+\varepsilon}{1-\varepsilon} 
\bigg( 1 - \frac{\norm{Q_\xi(x_\star) u_\tau}^2}{\norm{u_\tau}^2} \bigg)+ \frac{1+\varepsilon}{2}\sqrt{\tau},\;\PP\text{-a.s.}
\end{equation*}
Hence, taking the expectation and recalling definition \eqref{eq:main2_mu}, we have
\begin{equation*}
\frac{\EE[D_C(P_{C_\xi}(x))]}{D_C(x)}
\leq \frac{1+\varepsilon}{1-\varepsilon} 
\bigg( 1 - \frac{\norm{\overline{Q}(x_\star) u}^2}{\norm{u}^2} \bigg) 
+ \frac{1+\varepsilon}{2}\sqrt{\tau} \leq 
\frac{1+\varepsilon}{1-\varepsilon} 
[1 - \gamma_{\C}(x_\star)]+ \frac{1+\varepsilon}{2}\sqrt{\tau}.
\end{equation*}
Finally, letting $\tau\to0$ in the above inequality the statement follows.
\end{proof}

\begin{lemma}\label{lm:sigma_mu}
Under the same assumptions of Lemma~\ref{lm:gamma_mu}, define the function
$\sigma_{\C,\mu}\colon \inte(\dom\Leg)\to[0,1]$ such that
\begin{equation}\label{eq:sigma_mu}
\sigma_{\C,\mu}(x) = \begin{cases}
\displaystyle{\sup_{z\in K(x)\setminus C}}\left[\frac{\EE[D_C(P_{C_\xi}(z))]}{D_{C}(z)}\right]& \text{ if }x\not\in C, \\
1-\gamma_{\C,\mu}(x) & \text{ if } x\in C,
\end{cases}
\end{equation}
where, $K(x)$ is defined as in Lemma~\ref{lm:sigma_I}. Then, the following hold.

\begin{enumerate}[label={\rm (\roman*)}] 
\item\label{lm:sigma_mu_i} 
Suppose that $C = \bigcap_{i \in I} C_i$. Then, for every $x\in\inte(\dom\Leg)$, $\sigma_{\C}(x)\leq \sigma_{\C,\mu}(x)$.
\item\label{lm:sigma_mu_ib} 
For every $\varepsilon \in \left]0,1\right[$ there exists $\delta>0$ such that
\begin{equation*}
(\forall\, x\in\inte(\dom\Leg))\ D_C(x) 
\leq \delta\ \Rightarrow\ \sigma_{\C,\mu}(x) \leq \frac{1+\varepsilon}{1-\varepsilon} 
\big(1 - \gamma_{\C,\mu}(P_C(x))\big).
\end{equation*}
\item\label{lm:sigma_mu_ii} 
$(\forall\, x\in\inte(\dom\Leg))$
$\sigma_{\C,\mu}(x)<1\ \ \text{and}\ \ \EE[D_C(P_{C_\xi}(x))]\leq\sigma_{\C,\mu}(x) \,D_C(x)$. 
\item\label{lm:sigma_mu_iii} 
$(\forall\, x,y\in\inte(\dom\Leg)\!\setminus\!C)$ $y \in K(x)\ \Rightarrow\ \sigma_{\C,\mu}(y)\leq\sigma_{\C,\mu}(x)$.
\end{enumerate}  
\end{lemma}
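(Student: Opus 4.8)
The plan is to mirror the proof of Lemma~\ref{lm:sigma_I}, replacing the deterministic quantity $\inf_{i\in I}D_C(P_{C_i}(\cdot))$ by its stochastic counterpart $\EE[D_C(P_{C_\xi}(\cdot))]$ and invoking Lemma~\ref{lm:gamma_mu} in place of Lemma~\ref{lm:gamma_I}. The key observation, used throughout, is that by Proposition~\ref{prop:stochasticintersection2}\ref{prop:stochasticintersection2_ii} one has $C\subset C_\xi$ $\PP$-a.s., so Lemma~\ref{lm:aff_proj_1} (with $C_2=C\subset C_1=C_\xi$) gives, for every fixed $z\in\inte(\dom\Leg)$, the $\PP$-a.s.~identity $D_C(P_{C_\xi}(z))=D_C(z)-D_{C_\xi}(z)$; taking expectations and recalling \eqref{eq:20200611b},
\begin{equation*}
\EE[D_C(P_{C_\xi}(z))]=D_C(z)-\overline{D}_C(z),\qquad\text{whence}\quad \frac{\EE[D_C(P_{C_\xi}(z))]}{D_C(z)}=1-\frac{\overline{D}_C(z)}{D_C(z)}\ \ (z\notin C).
\end{equation*}
Statement \ref{lm:sigma_mu_iii} is then immediate: if $x,y\notin C$ and $y\in K(x)$, then $P_C(y)=P_C(x)$ and $D_C(y)\le D_C(x)$ force $K(y)\subset K(x)$, hence $K(y)\setminus C\subset K(x)\setminus C$ and the supremum defining $\sigma_{\C,\mu}(y)$ is over a smaller set. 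The second inequality in \ref{lm:sigma_mu_ii} is likewise immediate, since $x\in K(x)$ when $x\notin C$ (and both sides vanish when $x\in C$, using $P_{C_\xi}(x)=x$ $\PP$-a.s.).

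For \ref{lm:sigma_mu_i} I would argue off $C$ and on $C$ separately. If $x\notin C$, then for each $z\in K(x)\setminus C$ the random variable $D_C(P_{C_\xi}(z))$ is $\PP$-a.s.~at least $\inf_{i\in I}D_C(P_{C_i}(z))$ (as $\xi\in I$ a.s.), so $\inf_{i\in I}D_C(P_{C_i}(z))\le\EE[D_C(P_{C_\xi}(z))]$; dividing by $D_C(z)$ and taking the supremum over $z$ gives $\sigma_\C(x)\le\sigma_{\C,\mu}(x)$ (the hypothesis $C=\bigcap_{i\in I}C_i$ ensures $\sigma_\C$ is exactly the object of Lemma~\ref{lm:sigma_I} for the same $C$ and $K(x)$). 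If $x\in C$, I would compare the two $\gamma$'s: since each $Q_i(x)$ is an orthogonal projector, $\scalarp{\overline{Q}(x)v,v}=\EE[\norm{Q_\xi(x)v}^2]\le\sup_{i\in I}\norm{Q_i(x)v}^2$ for every $v$, whence $\gamma_{\C,\mu}(x)\le\gamma_\C(x)$ and therefore $\sigma_\C(x)=1-\gamma_\C(x)\le 1-\gamma_{\C,\mu}(x)=\sigma_{\C,\mu}(x)$. For the local estimate \ref{lm:sigma_mu_ib}, given $\varepsilon$ I would take $\delta>0$ from Lemma~\ref{lm:gamma_mu}\ref{lm:gamma_mu_ii} (shrinking it so the hypothesis $D_C(\cdot)\le\delta$ suffices): for $x\notin C$ with $D_C(x)\le\delta$, every $z\in K(x)\setminus C$ satisfies $0<D_C(z)\le D_C(x)\le\delta$ and $P_C(z)=P_C(x)$, so Lemma~\ref{lm:gamma_mu}\ref{lm:gamma_mu_ii} applied at $z$ bounds $\EE[D_C(P_{C_\xi}(z))]/D_C(z)$ by $\tfrac{1+\varepsilon}{1-\varepsilon}\big(1-\gamma_{\C,\mu}(P_C(x))\big)$; taking the supremum yields the claim, while the case $x\in C$ is trivial since $\tfrac{1+\varepsilon}{1-\varepsilon}\ge1$.

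The main work, and the step I expect to require the most care, is the strict bound $\sigma_{\C,\mu}(x)<1$ in \ref{lm:sigma_mu_ii}. For $x\in C$ this is just $\gamma_{\C,\mu}(x)>0$ from Lemma~\ref{lm:gamma_mu}\ref{lm:gamma_mu_i}. For $x\notin C$, set $x_\star=P_C(x)$ and recall from Lemma~\ref{lm:sigma_I} that $K(x)$ is compact; note also $K(x)\cap C=\{x_\star\}$ (if $z\in K(x)\cap C$ then $z=P_C(z)=P_C(x)=x_\star$ by Fact~\ref{prop:projproperties}\ref{prop:projproperties_0}). I would split $K(x)\setminus C$ into a part near $x_\star$ and a compact remainder. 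First choose $\varepsilon$ small enough that $c_0:=1-\tfrac{1+\varepsilon}{1-\varepsilon}\big(1-\gamma_{\C,\mu}(x_\star)\big)>0$ and let $\delta$ be as in Lemma~\ref{lm:gamma_mu}\ref{lm:gamma_mu_ii}; then, by the displayed identity, $\overline{D}_C(z)/D_C(z)\ge c_0$ for all $z\in K(x)$ with $0<D_C(z)<\delta$. On the complement $B=\{z\in K(x)\,\vert\,D_C(z)\ge\delta\}$, which is compact (by continuity of $D_C$, Fact~\ref{prop:projproperties}\ref{prop:projproperties_iv}) and contained in $K(x)\setminus C$, the lower semicontinuous function $\overline{D}_C$ (Proposition~\ref{prop:stochasticintersection2}\ref{prop:stochasticintersection2_0}) attains a minimum $m$, which is positive by Proposition~\ref{prop:stochasticintersection2}\ref{prop:stochasticintersection2_i}, while $D_C\le M<+\infty$ on the compact $K(x)$; hence $\overline{D}_C(z)/D_C(z)\ge m/M>0$ on $B$. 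Combining the two regions gives $\inf_{z\in K(x)\setminus C}\overline{D}_C(z)/D_C(z)\ge\min\{c_0,m/M\}>0$, so $\sigma_{\C,\mu}(x)=1-\inf_{z\in K(x)\setminus C}\overline{D}_C(z)/D_C(z)<1$. The delicate point is precisely that $\overline{D}_C$ is only lower semicontinuous, not continuous; this is why the argument must be organized so that lower semicontinuity is used only to bound the \emph{numerator} from below on the compact set $B$ (guaranteeing attainment of a positive minimum), whereas the indeterminate ratio near $x_\star$ is controlled instead through the Hessian-based local estimate of Lemma~\ref{lm:gamma_mu}\ref{lm:gamma_mu_ii}.
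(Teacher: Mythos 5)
Your proof is correct, and all four items are handled with the same ingredients the paper uses (Lemma~\ref{lm:aff_proj_1}, Lemma~\ref{lm:gamma_mu}, lower semicontinuity of $\overline{D}_C$ from Proposition~\ref{prop:stochasticintersection2}, compactness of $K(x)$). Items \ref{lm:sigma_mu_i}, \ref{lm:sigma_mu_ib}, \ref{lm:sigma_mu_iii} and the second half of \ref{lm:sigma_mu_ii} match the paper's proof essentially verbatim; in particular your comparison $\scalarp{\overline{Q}(x)v,v}=\EE[\norm{Q_\xi(x)v}^2]\leq\sup_{i\in I}\norm{Q_i(x)v}^2$ is exactly the paper's argument for \ref{lm:sigma_mu_i} on $C$. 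Where you genuinely diverge is the strict bound $\sigma_{\C,\mu}(x)<1$ for $x\notin C$: the paper transplants the open-cover argument of Lemma~\ref{lm:sigma_I}, assigning to every $z\in K(x)$ (whether in $C$ or not) a neighborhood $U_z$ on which a ratio bound holds, extracting a finite subcover of the compact $K(x)$, and taking the maximum of finitely many constants. You instead make the identity $\EE[D_C(P_{C_\xi}(z))]=D_C(z)-\overline{D}_C(z)$ explicit, observe that $K(x)\cap C=\{x_\star\}$ and $P_C\equiv x_\star$ on $K(x)$, and split $K(x)\setminus C$ by the value of $D_C$: on $\{0<D_C<\delta\}$ the single local estimate of Lemma~\ref{lm:gamma_mu}\ref{lm:gamma_mu_ii} at $x_\star$ gives $\overline{D}_C/D_C\geq c_0$, while on the compact superlevel set $\{D_C\geq\delta\}\cap K(x)$ the lower semicontinuous $\overline{D}_C$ attains a positive minimum and $D_C\leq D_C(x)$, giving a uniform positive lower bound there as well. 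This avoids the covering/finite-subcover machinery entirely and isolates cleanly the only place where lower semicontinuity of $\overline{D}_C$ (rather than continuity) is needed; the paper's covering route has the advantage of being a literal reuse of the deterministic proof, but yours is the more economical argument for the stochastic case. Only cosmetic care is needed with strict versus non-strict inequalities in $\delta$ (Lemma~\ref{lm:gamma_mu}\ref{lm:gamma_mu_ii} is stated with $D_C(x)<\delta$, item \ref{lm:sigma_mu_ib} with $D_C(x)\leq\delta$), which you already flag and which is fixed by halving $\delta$.
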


\begin{proof}
\ref{lm:sigma_mu_i}: 
Comparing \eqref{eq:sigma_mu} with \eqref{eq:sigma_I}, the statement is clear if $x\not\in C$. 
On the other hand, for $x\in C$, recalling \eqref{eq:main2_i} and \eqref{eq:main2_mu}, the statement 
follows from the fact that $\scalarp{\EE[Q_\xi(x)]v,v} = \EE[\scalarp{Q_\xi(x)v,v}] 
\leq \sup_{i\in I} \scalarp{Q_i(x)v,v} = \sup_{i\in I} \norm{Q_i(x)v}^2$.

\ref{lm:sigma_mu_ib}:
Let $\varepsilon$ and $\delta$ as in Lemma~\ref{lm:gamma_mu}\ref{lm:gamma_mu_ii}
and $x \in \inte(\dom\Leg)$ such that $D_C(x) \leq \delta$. If $x \in C$, then, by definition, 
$\sigma_{\C,\mu}(x) = 1 - \gamma_{\C,\mu}(x)$ and the statement holds. Suppose that $x \notin C$.
Then, for every $z \in K(x)\setminus\! C$, we have $D_C(z) \leq D_C(x) \leq \delta$ and hence,
by \eqref{eq:20201005b}, $\EE[D_C(P_{C_\xi}(z))]/D_C(z) \leq (1+\varepsilon)(1-\varepsilon)^{-1} 
(1-\gamma_{\C,\mu}(P_C(x)))$. The statement follows from the definition of 
$\sigma_{\C,\mu}$ in \eqref{eq:sigma_mu}.
	
\ref{lm:sigma_mu_ii}:
We proceed as in the proof of Lemma \ref{lm:sigma_I}\ref{lm:sigma_Ii}. 
Let $x\in\inte(\dom\Leg)$ and set $x_\star = P_C(x)$. According to Lemma \ref{lm:gamma_mu}, 
if $x\in C$, then $\sigma_{\C,\mu}(x)<1$. So, assume $x\not\in C$. Then, for any $z\in K(x) \setminus C$, 
$D_C(z)>0$ and, according to Proposition \ref{prop:stochasticintersection2}, $\overline{D}_{C}(z)>0$. 
Thus, there exists $0<\gamma_z\leq 1$ such that $\overline{D}_C(z)-\gamma_z D_C(z)>0$, but, 
since $D_C$ is continuous and $\overline{D}_C$ is lower semicontinuous, there exists an open set 
$U_z\subset\inte(\dom\Leg)$ such that $z\in U_z$ and for all $\tilde z\in U_z$, 
$\overline{D}_{C}(\tilde z) \geq \gamma_z D_C(\tilde z)$. Using again Lemma \ref{lm:aff_proj_1},
we have that $\EE[D_C(P_{C_\xi}(\tilde z))]\leq (1-\gamma_z) D_C(\tilde z)$. 
The rest of the proof follows the same lines of argument as in Lemma \ref{lm:sigma_I}, just by replacing 
$\inf_{i\in I} D_C(P_{C_i}(\cdot))$, $\gamma_{\C}$, $\sigma_{\C}$ and Lemma \ref{lm:gamma_I} by 
$\EE[D_C(P_{C_\xi}(\cdot))]$, $\gamma_{\C,\mu}$, $\sigma_{\C,\mu}$ and Lemma \ref{lm:gamma_mu}, 
respectively. 

\ref{lm:sigma_mu_iii}: 
The proof is identical to that of Lemma~\ref{lm:sigma_I}\ref{lm:sigma_Iii},
but now uses \eqref{eq:sigma_mu}.
\end{proof}

\begin{theorem}[Random set control scheme]
\label{thm:main_random}
With reference to Problem~\ref{P2},
suppose that \ref{eq:stand1}, \ref{eq:AS3}, and \ref{eq:AS3b} hold. 
Let $(x_k)_{k \in \N}$ be generated by Algorithm~\ref{algoABP} using
the random set control scheme \ref{eq:C2}  and
let $x_\star = P_{C}(x_0)$.
Then, the following hold.

\begin{enumerate}[label={\rm (\roman*)}] 
\item\label{thm:main_random_0}  
$(\forall\, k \in \N)\quad
\EE[D_C(x_{k+1})\,\vert\,\mathfrak{X}_{k}]\leq \sigma_{\C,\mu}(x_{k})D_C(x_{k})\ \PP$-a.s.
\item\label{thm:main_random_i} 
$(\forall\, n \in \N)(\forall\, k \in \N)\quad\EE[D_C(x_{k+1+n})]\leq (\esssup\sigma_{\C,\mu}(x_{n})) \EE[ D_C(x_{k+n})]$.
\item\label{thm:main_random_iii} 
For every $\varepsilon\in \left]0,1\right[$, there exists $\delta>0$, such that if $\norm{x_n-x_\star}\leq \delta$ holds $\PP$-a.s. for some $n\in\N$, then for every $k\geq n$
\begin{equation}
\EE[D_C(x_{k+1})] \leq \frac{1+\varepsilon}{1-\varepsilon} \sigma_{\C,\mu}(x_\star)\, \EE[D_C(x_{k})].
\end{equation}
\item\label{thm:main_random_iv} 
For every $\varepsilon, \alpha \in \left]0,1\right[$, there exists $\mathcal{A} \in \mathfrak{A}$ and $n\in\N$, 
such that $\PP[\Omega\setminus \mathcal{A}]\leq\alpha$ and, for all $k\geq n$, we have
\begin{equation}
\label{eq:main2_ii}
\EE[D_C(x_{k+1})\,\vert\,\mathcal{A}] \leq \frac{1+\varepsilon}{1-\varepsilon}  
\sigma_{\C,\mu}(x_\star) \EE[D_C(x_{k})\,\vert\,\mathcal{A}].
\end{equation}
\end{enumerate}
\end{theorem}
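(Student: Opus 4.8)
The plan is to follow the same architecture as the greedy case (Theorem~\ref{thm:main2}), replacing the supremum over $i$ by the expectation over $\xi$ and inserting a conditioning layer that turns the pointwise one-step estimate of Lemma~\ref{lm:sigma_mu} into statements about $\EE[D_C(x_k)]$. For \ref{thm:main_random_0}, I would first recall from Theorem~\ref{thm3b}\ref{prop_randomaffine_ii} together with Lemma~\ref{lm:aff_proj_1} that $D_C(x_{k+1}) = D_C(P_{C_{\xi_k}}(x_k))$ $\PP$-a.s. Since $x_k$ is $\mathfrak{X}_k$-measurable while $\xi_k$ is an independent copy of $\xi$, the freezing property of conditional expectation (Fact~\ref{f:20190112d}, exactly as in the proof of Theorem~\ref{thm:conv2}) gives $\EE[D_C(x_{k+1})\,\vert\,\mathfrak{X}_k] = \psi(x_k)$ with $\psi(x) = \EE[D_C(P_{C_\xi}(x))]$. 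Lemma~\ref{lm:sigma_mu}\ref{lm:sigma_mu_ii} then bounds $\psi(x_k)\leq\sigma_{\C,\mu}(x_k)\,D_C(x_k)$, which is \ref{thm:main_random_0}.

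For \ref{thm:main_random_i} I would apply \ref{thm:main_random_0} at index $k+n$ and control $\sigma_{\C,\mu}(x_{k+n})$ by $\sigma_{\C,\mu}(x_n)$. By Theorem~\ref{thm3b}\ref{prop_randomaffine_ii} the sequence $(D_C(x_k))_{k\in\N}$ is $\PP$-a.s.\ decreasing and $P_C(x_k)\equiv x_\star$ $\PP$-a.s.; hence, whenever $D_C(x_{k+n})>0$ one has $0<D_C(x_{k+n})\leq D_C(x_n)$, so $x_n,x_{k+n}\notin C$ and $x_{k+n}\in K(x_n)$, and Lemma~\ref{lm:sigma_mu}\ref{lm:sigma_mu_iii} gives $\sigma_{\C,\mu}(x_{k+n})\leq\sigma_{\C,\mu}(x_n)\leq\esssup\sigma_{\C,\mu}(x_n)$, while if $D_C(x_{k+n})=0$ the product $\sigma_{\C,\mu}(x_{k+n})D_C(x_{k+n})$ vanishes. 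In either case $\EE[D_C(x_{k+1+n})\,\vert\,\mathfrak{X}_{k+n}]\leq(\esssup\sigma_{\C,\mu}(x_n))\,D_C(x_{k+n})$ $\PP$-a.s., and total expectation yields \ref{thm:main_random_i}. For the local estimate \ref{thm:main_random_iii}, Lemma~\ref{lm:sigma_mu}\ref{lm:sigma_mu_ib} furnishes, for the given $\varepsilon$, a threshold $\delta'$ with $D_C(x)\leq\delta'\Rightarrow\sigma_{\C,\mu}(x)\leq\tfrac{1+\varepsilon}{1-\varepsilon}(1-\gamma_{\C,\mu}(P_C(x)))$; since $P_C(x_k)=x_\star$ and $1-\gamma_{\C,\mu}(x_\star)=\sigma_{\C,\mu}(x_\star)$, this reads $\sigma_{\C,\mu}(x_k)\leq\tfrac{1+\varepsilon}{1-\varepsilon}\sigma_{\C,\mu}(x_\star)$ whenever $D_C(x_k)\leq\delta'$. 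Continuity of $D_C$ at $x_\star$ (Fact~\ref{prop:projproperties}\ref{prop:projproperties_iv}, with $D_C(x_\star)=0$) provides $\delta>0$ such that $\norm{x_n-x_\star}\leq\delta\Rightarrow D_C(x_n)\leq\delta'$; $\PP$-a.s.\ monotonicity of $D_C$ then propagates $D_C(x_k)\leq\delta'$ to all $k\geq n$, and combining with \ref{thm:main_random_0} and taking expectation gives \ref{thm:main_random_iii}.

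The main obstacle is \ref{thm:main_random_iv}, where I only dispose of a large-probability event rather than an almost-sure bound, and conditioning on an arbitrary event need not commute with $\EE[\,\cdot\,\vert\,\mathfrak{X}_k]$. The resolution is to choose $\mathcal{A}$ \emph{adapted} to the filtration. Keeping $\delta'$ as above, I set $\mathcal{A}:=\{D_C(x_n)\leq\delta'\}$, which is $\mathfrak{X}_n$-measurable, hence $\mathfrak{X}_k$-measurable for every $k\geq n$. Since $D_C(x_k)\to 0$ $\PP$-a.s.\ (Theorem~\ref{thm:conv2}\ref{thm:conv2_ii}, applicable as the $C_i$ are in particular closed convex), $\PP(\mathcal{A})\to 1$ as $n\to\infty$, so $n$ can be chosen with $\PP(\Omega\setminus\mathcal{A})\leq\alpha$. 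On $\mathcal{A}$, the $\PP$-a.s.\ monotonicity of $D_C$ gives $D_C(x_k)\leq\delta'$, and therefore $\sigma_{\C,\mu}(x_k)\leq\rho:=\tfrac{1+\varepsilon}{1-\varepsilon}\sigma_{\C,\mu}(x_\star)$, for all $k\geq n$.

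It then remains to push the conditioning through. Multiplying the inequality in \ref{thm:main_random_0} by the $\mathfrak{X}_k$-measurable indicator $\mathbf{1}_{\mathcal{A}}$ and using that it pulls inside the conditional expectation yields $\EE[\mathbf{1}_{\mathcal{A}}D_C(x_{k+1})\,\vert\,\mathfrak{X}_k]\leq\rho\,\mathbf{1}_{\mathcal{A}}D_C(x_k)$ $\PP$-a.s.; taking total expectation and dividing by $\PP(\mathcal{A})>0$ produces exactly \eqref{eq:main2_ii}. The only delicate point is precisely the adaptedness of $\mathbf{1}_{\mathcal{A}}$ at time $k$, which is why $\mathcal{A}$ is built from $D_C(x_n)$ rather than, say, from an Egorov set for the convergence $x_k\to x_\star$: the latter would give a set of large probability but not one measurable with respect to $\mathfrak{X}_k$, and the indicator would fail to commute with $\EE[\,\cdot\,\vert\,\mathfrak{X}_k]$.
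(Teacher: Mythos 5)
Your proposal is correct and follows essentially the same route as the paper: part (i) via Theorem~\ref{thm3b}\ref{prop_randomaffine_ii}, Fact~\ref{f:20190112d}, and Lemma~\ref{lm:sigma_mu}\ref{lm:sigma_mu_ii}; part (ii) via the monotonicity of $(D_C(x_k))_{k\in\N}$ and Lemma~\ref{lm:sigma_mu}\ref{lm:sigma_mu_iii}; part (iii) via Lemma~\ref{lm:sigma_mu}\ref{lm:sigma_mu_ib} and continuity; and part (iv) via the $\mathfrak{X}_n$-measurable event $\{D_C(x_n)\leq\delta\}$ whose indicator commutes with $\EE[\,\cdot\,\vert\,\mathfrak{X}_k]$. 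The only cosmetic deviation is that you justify $\PP(\mathcal{A})\to 1$ through the almost sure convergence $D_C(x_k)\to 0$ where the paper uses Markov's inequality together with $\EE[D_C(x_k)]\to 0$; both are valid.
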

\begin{proof}
\ref{thm:main_random_0}:
First, observe that from Theorem \ref{thm3b}\ref{prop_randomaffine_ii} for every $k\in\N$, $P_C(x_k) = x_\star$ and, therefore, using Lemma \ref{lm:sigma_mu}\ref{lm:sigma_mu_ii} and Fact~\ref{f:20190112d}, the inequality follows. 

\ref{thm:main_random_i}: Again Theorem~\ref{thm3b}\ref{prop_randomaffine_ii} yields that
$(D_C(x_k))_{k\in\N}$ is decreasing $\PP$-a.s.~and that $P_C(x_k)= x_\star$ $\PP$-a.s., for every $k \in \N$.
So, also in virtue of \ref{thm:main_random_0}, there exists a $\PP$-negligible set $N$ 
such that for every $\omega \in \Omega\setminus N$,
$(D_C(x_k(\omega)))_{k\in\N}$ is decreasing, $(P_C(x_k(\omega)))_{k \in \N} \equiv x_\star$,
and $\EE[D_C(x_{k+1})\,\vert\,\mathfrak{X}_{k}](\omega)\leq \sigma_{\C,\mu}(x_{k}(\omega))D_C(x_{k}(\omega))$.
Let $n,k \in \N$ and $\omega \in \Omega\setminus N$. We prove that 
\begin{equation}
\label{eq:20201216b}
\eta_{k+1+n}(\omega):=\EE[D_C(x_{k+1+n})\,\vert\,\mathfrak{X}_{k+n}](\omega)
\leq \sigma_{\C,\mu}(x_{n}(\omega))D_C(x_{k+n}(\omega)).
\end{equation}
Then, since the above inequality holds $\PP$-a.s., the statement will follow by just 
majorizing $\sigma_{\C,\mu}(x_{n})$ with its essential supremum and then taking expectation.
Now, if $\eta_{k+1+n}(\omega) =0$, then \eqref{eq:20201216b} holds.
Otherwise, since $\sigma_{\C,\mu}(x_{k+n}(\omega)) \leq 1$, we have 
$0< \eta_{k+1+n}(\omega) \leq \sigma_{\C,\mu}(x_{k+n}(\omega)) D_C(x_{k+n}(\omega)) 
\leq D_C(x_{k+n}(\omega)) 
\leq D_C(x_n(\omega))$.
Hence $x_{k+n}(\omega), x_n(\omega) \notin C$ 
and, using the notation of Lemma~\ref{lm:sigma_mu}, $x_{k+n}(\omega) \in K(x_n(\omega))$.
Therefore, Lemma~\ref{lm:sigma_mu}\ref{lm:sigma_mu_iii} yields $\sigma_{\C,\mu}(x_{k+n}(\omega)) 
\leq \sigma_{\C,\mu}(x_n(\omega))$
and hence $\eta_{k+1+n}(\omega) \leq \sigma_{\C,\mu}(x_n(\omega)) D_C(x_{k+n}(\omega))$, so that 
\eqref{eq:20201216b} holds.

\ref{thm:main_random_iii}: Let $\varepsilon\in \left]0,1\right[$ and $\delta>0$ be from 
Lemma \ref{lm:sigma_mu}\ref{lm:sigma_mu_ib}.  
Since $\Bre(x_\star,\cdot)$ is continuous, 
there exists $\delta_1 >0$ such that if $\norm{x_n-x_\star}\leq \delta_1$ $\PP$-a.s. for some $n\in\N$, then 
$D_C(x_n)=\Bre(x_\star, x_n)\leq \delta$ $\PP$-a.s. Then, for every integer $k \geq n$,
we have $D_C(x_k) \leq D_C(x_n) \leq \delta$ $\PP$-a.s.~and hence, by
Lemma~\ref{lm:sigma_mu}\ref{lm:sigma_mu_ib},
$\sigma_{\C,\mu}(x_k) \leq (1+\varepsilon)(1-\varepsilon)^{-1} \sigma_{\C,\mu}(x_\star)D_C(x_k)$.
The statement follows by using \ref{thm:main_random_0} and then taking the expectation.

\ref{thm:main_random_iv}:
Let $\varepsilon\in\left]0,1\right[$ and $\delta>0$ as in Lemma \ref{lm:gamma_mu}\ref{lm:gamma_mu_ii} and set 
$\mathcal{A}_{k}:=\{ D_C(x_{k})\leq\delta \}$. Then, 
denoting by $\chi_{\mathcal{A}_{k}}$ the characteristic function of the set $\mathcal{A}_{k}$ and using, 
as before, Lemma \ref{lm:sigma_mu} and \ref{thm:main_random_0},
we have
\begin{equation}
\label{eq:20200826d}
\EE[D_C(x_{k+1}) \,\vert\,\mathfrak{X}_{k}] \chi_{\mathcal{A}_{k}} \leq \frac{1 + \varepsilon}{1-\varepsilon} 
\sigma_{\C,\mu}(x_\star) D_C(x_{k}) \chi_{\mathcal{A}_{k}}.
\end{equation}
Moreover, Markov inequality yields $\PP[\Omega\setminus\mathcal{A}_{k}]\leq \EE[D_C(x_{k})]/\delta$, 
so, since, in virtue of \ref{thm:main_random_i} with $n=0$, 
$\EE[D_C(x_{k})]\to0$, for every $\alpha \in \left]0,1\right[$ there exists $n\in\N$ such that,  
$\PP(\Omega\setminus\mathcal{A}_{n})\leq\alpha$. Let $\mathcal{A}:=\mathcal{A}_{n}$. Since $(D_C(x_k))_{k\in\N}$ 
is decreasing $\PP$-a.s., then for all $k\geq n$, $\mathcal{A}\subset\mathcal{A}_{k}$, except for a negligible set. 
Hence, from \eqref{eq:20200826d} we obtain 
\begin{equation*}
\EE[D_C(x_{k+1}) \,\vert\,\mathfrak{X}_{k}] \chi_{\mathcal{A}} \leq \frac{1 + \varepsilon}{1-\varepsilon} 
\sigma_{\C,\mu}(x_\star) D_C(x_{k}) \chi_{\mathcal{A}}.
\end{equation*}
But, using $\EE[D_C(x_{k+1}) \chi_{\mathcal{A}}] 
= \EE[\EE[D_C(x_{k+1}) \chi_{\mathcal{A}} \,\vert\, \mathfrak{X}_{k}] ] 
= \EE[\EE[D_C(x_{k+1})\,\vert\, \mathfrak{X}_{k}]\, \chi_{\mathcal{A}} ]$ and $\EE[(\cdot)\, \chi_{\mathcal{A}}] 
= \EE[(\cdot)\,\vert\, \mathcal{A}] \PP(\mathcal{A})$ we obtain \eqref{eq:main2_ii}.
\end{proof}

\begin{remark}
\label{rmk:Qlin}
\normalfont
Taking $n=0$ in Theorem~\ref{thm:main_random}\ref{thm:main_random_i} we get the 
global $Q$-linear rate in expectation: $\EE[D_C(x_{k+1})]\leq \sigma_{\C,\mu}(x_{0}) \EE[ D_C(x_{k})]$,
where $\sigma_{\C,\mu}(x_{0})<1$. However, when $n>0$, even though 
 $\sigma_{\C,\mu}(x_{n})<1$ $\PP$-a.s., we are not ensured that
$\esssup \sigma_{\C,\mu}(x_{n})<1$.
\end{remark}

\subsection{Adaptive random set control scheme}
\label{sec:adaptive}

In this section we address again Problem~\ref{P2}, but as for Algorithm~\ref{algoABP},
 we do not assume anymore that the set control indexes 
 $\xi_k$'s are independent copies of $\xi$, but, instead, that they are set adaptively during the algorithm. 
 More precisely, we consider the case where the probability distribution of the next step is adapted to the information 
 that we may obtain in the present. We formalize this set control scheme in the following assumption, where 
 $\mu$ is again the distribution of $\xi$ and $\overline{D}_C$ is still defined as in \eqref{eq:20200611b}.

\begin{enumerate}[label={\rm \textbf{C3}},leftmargin=4.6ex]
\item\label{eq:C3}
 $\nu\colon \mathcal{I}\times \inte(\dom \Leg) \to \R_+$ is a probability kernel such that\footnote{
This means that $\nu(A,x) = \int_{A} [D_{C_{i}}(x)/\overline{D}_C(x)] \mu(d i)$ if $x \notin C$ and
$\nu(A,x) = \mu(A)$ if $x \in C$.}
\begin{equation}
\label{eq:20200924a}
(\forall\,x \in \inte(\dom \Leg))\qquad \nu(\cdot, x) = 
\begin{cases}
\dfrac{D_{C_{(\cdot)}}(x)}{\overline{D}_C(x)} \mu &\text{if } x \notin C\\
\mu &\text{if } x \in C.
\end{cases}
\end{equation}
Moreover, $(\xi_k)_{k \in \N}$ is a sequence of $I$-valued random variables and
$(x_k)_{k \in \N}$ is a sequence of $X$-valued random variables defined recursively 
according to Algorithm~\ref{algoABP}. For all $k \in \N$,
 $\mathfrak{X}_{k}$ is the sigma algebra generated by $x_0, \dots, x_{k}$, and  
 $P_{\xi_k\vert \mathfrak{X}_{k}}\colon \mathcal{I}\times\Omega \to \R_+$ denotes the conditional 
 distribution of $\xi_k$, given $\mathfrak{X}_{k}$. Finally, 
 \begin{equation}
 \label{eq:20200923a}
 (\forall\, \omega \in \Omega)\quad
 P_{\xi_k\vert \mathfrak{X}_{k}}(\cdot,\omega) 
= \nu(\cdot, x_{k}(\omega)).
 \end{equation}
\end{enumerate}

\begin{remark}
\normalfont
The above set control scheme subsume the knowledge of the Bregman distances to all $C_i$'s
from the current realization $x_k(\omega)$ of the $k$-th iterate and the possibility to compute the expectation 
$\EE[D_{C_{\xi}}(x_k(\omega))]$. This is feasible and can be efficiently implemented 
in the orthogonal sketch \& project method \cite{GMMN2020}. 
This is also the case for KL-projections in entropic regularized optimal transport,
however, we postpone a proof of this fact in a subsequent work. 
\end{remark}

\begin{remark}
\normalfont
Let $k \in \N$ and $\hat{x}_0, \dots, \hat{x}_{k-1}, x \in \inte(\dom \Leg)$.
One can define the conditional probability of $\xi_k$ given $x_0 = \hat{x}_0, \dots, x_{k-1} = \hat{x}_{k-1}, x_{k} = x$
\begin{equation}
P_{\xi_k\vert x_0 = \hat{x}_0, \dots, x_{k-1} = \hat{x}_{k-1}, x_{k} = x} \colon \mathcal{I} \to \R_+
\end{equation}
Then \eqref{eq:20200923a} is equivalent to ask for
\begin{equation}
(\forall\,x \in \inte(\dom \Leg))\quad P_{\xi_k\vert x_0 = \hat{x}_0, \dots, x_{k-1} = \hat{x}_{k-1}, x_{k} = x} = \nu(\cdot, x),
\end{equation}
which also implies $P_{\xi_k\vert x_0 = \hat{x}_0, \dots, x_{k-1} = \hat{x}_{k-1}, x_{k} = x} = P_{\xi_k\vert x_{k} = x}$
and does not depend on $k$.
\end{remark}

\begin{theorem}[Adaptive random set control]
\label{thm:main_adaptive}
Under the assumptions of Problem~\ref{P2},
suppose in addition that \ref{eq:stand1}, \ref{eq:AS3}, and \ref{eq:AS3b} hold. 
Let $V\colon\inte(\dom \Leg)\to \R_+$ and $\beta\colon\inte(\dom \Leg)\to \R_+$ be such that
\begin{equation}
V(x) := 
\begin{cases}
\VV \bigg[ \dfrac{D_{C_{\xi}}(x)}{\overline{D}_C(x)}\bigg] &\text{if } x \notin C\\[2ex]
0&\text{if } x \in C,
\end{cases}
\quad\text{and}\quad
\beta(x) = 1 + V(x).
\end{equation} 
Let $(x_k)_{k \in \N}$ be generated by Algorithm~\ref{algoABP} using
the adaptive random set control scheme \ref{eq:C3}  and
let $x_\star = P_{C}(x_0)$.
Set $\beta_\infty :=\liminf_{k\to\infty}(\essinf\beta(x_k))\geq1$ and, for every $k \in \N$,
 $\tilde{\sigma}_{\C,\mu}(x_k) = \beta(x_k) \sigma_{\C,\mu}(x_{k}) + 1-\beta(x_k)$.
Then the following holds. 
\begin{enumerate}[label={\rm (\roman*)}] 
\item\label{thm:main_adaptive_00} 
$(\forall\, k \in \N)\ 0\leq \tilde{\sigma}_{\C,\mu}(x_k) \leq \sigma_{\C,\mu}(x_k)\ \PP$-a.s.
\item\label{thm:main_adaptive_0} $(\forall\, k \in \N)\ \EE[D_C(x_{k+1})\,\vert\,\mathfrak{X}_{k}] 
\leq \tilde{\sigma}_{\C,\mu}(x_k) D_C(x_{k})\ \PP$-a.s.
\item\label{thm:main_adaptive_i} $(\forall\, n \in \N)(\forall\, k \in \N)\ 
\EE[D_C(x_{k+1+n})]\leq (\esssup\sigma_{\C,\mu}(x_{n})) \EE[D_C(x_{n+k})]$
\item\label{thm:main_adaptive_iii} 
For every $\varepsilon>0$ such that $(1+\varepsilon)(1-\varepsilon)^{-1} \sigma_{\C\,\mu}(x_\star)<1$ 
and every $\alpha \in \left]0,1\right[$ there exists $\mathcal{A} \in \mathfrak{A}$  with
$\PP(\Omega\setminus \mathcal{A})\leq\alpha$ and $n\in\N$, such that, for all $k\geq n$, 
\begin{equation*}
\EE[D_C(x_{k+1})\,\vert\,\mathcal{A}] \leq \Big( \beta_\infty 
\sigma_{\C,\mu}(x_\star)+1- \frac{1+\varepsilon}{1-\varepsilon}\beta_\infty\Big) \, \EE[D_C(x_{k})\,\vert\,\mathcal{A}] 
\end{equation*}
\end{enumerate}
\end{theorem}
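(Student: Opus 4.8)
The plan is to reduce the whole theorem to one exact identity for the adaptive kernel and then run the same machinery as in the proof of Theorem~\ref{thm:main_random}. First I would record the preliminary facts that survive the change of control. Since $\mu(J)=0$ for the $\mu$-negligible set $J$ of Proposition~\ref{prop:stochasticintersection2}\ref{prop:stochasticintersection2_ii}, the definition \eqref{eq:20200924a} of $\nu$ gives $\nu(J,x_k)=0$ $\PP$-a.s.\ (the density $D_{C_{(\cdot)}}(x_k)/\overline{D}_C(x_k)$ integrates to zero over $J$ when $x_k\notin C$, and $\nu(J,x_k)=\mu(J)=0$ when $x_k\in C$), so $\xi_k\in I\setminus J$ and $C\subset C_{\xi_k}$ $\PP$-a.s. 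Exactly as in Theorem~\ref{thm3b}\ref{prop_randomaffine_ii}, Lemma~\ref{lm:aff_proj_1} then yields $P_C(x_k)=x_\star$ $\PP$-a.s.\ and that $(D_C(x_k))_{k\in\N}$ is decreasing $\PP$-a.s. The crux is the conditional expectation: by \eqref{eq:20200923a} the index $\xi_k$ is, conditionally on $\mathfrak{X}_k$, distributed as $\nu(\cdot,x_k)$, so the conditional form of Fact~\ref{f:20190112d} gives $\EE[D_C(x_{k+1})\,\vert\,\mathfrak{X}_k]=\psi(x_k)$, where for $x\notin C$, inserting the density of $\nu$ and the affine identity $D_C(P_{C_i}(x))=D_C(x)-D_{C_i}(x)$ of Lemma~\ref{lm:aff_proj_1},
\[
\psi(x)=\frac{1}{\overline{D}_C(x)}\,\EE\big[(D_C(x)-D_{C_\xi}(x))\,D_{C_\xi}(x)\big]
=D_C(x)-\frac{\EE[D_{C_\xi}(x)^2]}{\overline{D}_C(x)}.
\]
Since $\EE[D_{C_\xi}(x)^2]=\VV[D_{C_\xi}(x)]+\overline{D}_C(x)^2=\beta(x)\overline{D}_C(x)^2$ by the very definitions of $V$ and $\beta$, this collapses to the exact identity $\psi(x)=D_C(x)-\beta(x)\overline{D}_C(x)$, which drives everything.

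For \ref{thm:main_adaptive_00} and \ref{thm:main_adaptive_0} I would exploit this identity together with $\overline{D}_C(x)\ge(1-\sigma_{\C,\mu}(x))D_C(x)$, which is just Lemma~\ref{lm:sigma_mu}\ref{lm:sigma_mu_ii} rewritten via $\EE[D_C(P_{C_\xi}(x))]=D_C(x)-\overline{D}_C(x)$. Because $\psi(x)\ge0$ trivially, the identity gives $\beta(x)\overline{D}_C(x)\le D_C(x)$, and combining with the above lower bound yields $\beta(x)(1-\sigma_{\C,\mu}(x))\le1$, i.e.\ $\tilde\sigma_{\C,\mu}(x)=1-\beta(x)(1-\sigma_{\C,\mu}(x))\ge0$; since $\beta(x)\ge1$ and $\sigma_{\C,\mu}(x)<1$ the same expression is $\le\sigma_{\C,\mu}(x)$, giving \ref{thm:main_adaptive_00}. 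Feeding the lower bound back into the identity gives $\psi(x)\le(1-\beta(x)(1-\sigma_{\C,\mu}(x)))D_C(x)=\tilde\sigma_{\C,\mu}(x)D_C(x)$, which is \ref{thm:main_adaptive_0} (the case $x\in C$ is trivial since there $\beta\equiv1$ and $D_C=0$).

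Part \ref{thm:main_adaptive_i} is then a verbatim copy of Theorem~\ref{thm:main_random}\ref{thm:main_random_i}: chaining \ref{thm:main_adaptive_00}–\ref{thm:main_adaptive_0} gives $\EE[D_C(x_{k+1})\,\vert\,\mathfrak{X}_k]\le\sigma_{\C,\mu}(x_k)D_C(x_k)$ $\PP$-a.s., after which the monotonicity $\sigma_{\C,\mu}(x_{k+n})\le\sigma_{\C,\mu}(x_n)$ on $\{x_{k+n}\in K(x_n)\}$ from Lemma~\ref{lm:sigma_mu}\ref{lm:sigma_mu_iii}, together with $P_C(x_k)\equiv x_\star$ and the decreasingness of $(D_C(x_k))_{k\in\N}$, produces the iterated conditional bound, which is majorised by $\esssup\sigma_{\C,\mu}(x_n)$ and integrated. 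For the local rate \ref{thm:main_adaptive_iii} I would follow the event-conditioning scheme of Theorem~\ref{thm:main_random}\ref{thm:main_random_iv}: put $\mathcal{A}=\mathcal{A}_n:=\{D_C(x_n)\le\delta\}$ with $\delta$ from the local estimate; since $\EE[D_C(x_k)]\to0$ by \ref{thm:main_adaptive_i} with $n=0$, Markov's inequality lets me pick $n$ with $\PP(\Omega\setminus\mathcal{A})\le\alpha$, and decreasingness gives $\mathcal{A}\subset\mathcal{A}_k$ up to a null set for $k\ge n$. On $\mathcal{A}$ the local bound $\overline{D}_C(x_k)\ge(1-\tfrac{1+\varepsilon}{1-\varepsilon}\sigma_{\C,\mu}(x_\star))D_C(x_k)$, coming from Lemma~\ref{lm:gamma_mu}\ref{lm:gamma_mu_ii} with $P_C(x_k)=x_\star$ and $\sigma_{\C,\mu}(x_\star)=1-\gamma_{\C,\mu}(x_\star)$, fed into the identity gives a per-step factor $1-\beta(x_k)(1-\tfrac{1+\varepsilon}{1-\varepsilon}\sigma_{\C,\mu}(x_\star))$; bounding $\beta(x_k)$ below by $\essinf\beta(x_k)$ and passing to $\beta_\infty$ turns this into the stated constant, while the tower property and $\EE[\,\cdot\,\chi_{\mathcal{A}}]=\EE[\,\cdot\,\vert\,\mathcal{A}]\,\PP(\mathcal{A})$ convert the bound conditioned on $\mathfrak{X}_k$ into \eqref{eq:main2_ii}.

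The genuinely delicate step is \ref{thm:main_adaptive_iii}, for two reasons. First, $\beta_\infty=\liminf_k(\essinf\beta(x_k))$ is a $\liminf$ of essential infima, so one must fix $\eta>0$, enlarge $n$ so that $\essinf\beta(x_k)\ge\beta_\infty-\eta$ for all $k\ge n$, carry $\beta_\infty-\eta$ through the estimate, and only let $\eta\to0$ at the very end. Second, the per-step factor is monotone in $\beta$ only once the sign of $1-\tfrac{1+\varepsilon}{1-\varepsilon}\sigma_{\C,\mu}(x_\star)$ is fixed — which is precisely where the hypothesis $\tfrac{1+\varepsilon}{1-\varepsilon}\sigma_{\C,\mu}(x_\star)<1$ is consumed — so the placement of the $\varepsilon$-factor must be arranged so that the direction of the $\liminf$ bound and the direction of the inequality remain compatible and land on the displayed constant. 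Everything else reduces to the exact identity $\psi=D_C-\beta\overline{D}_C$ and is routine.
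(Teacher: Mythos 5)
Your proposal is correct and follows essentially the same route as the paper: the same exact identity $\EE[D_{C_{\xi_k}}(x_k)\,\vert\,\mathfrak{X}_k]=\beta(x_k)\overline{D}_C(x_k)$ obtained from the adaptive kernel and the variance computation, the same use of Lemma~\ref{lm:aff_proj_1} and of the $\mu$-negligible set $J$ to get $C\subset C_{\xi_k}$ $\PP$-a.s., the same derivation of \ref{thm:main_adaptive_00}--\ref{thm:main_adaptive_0} from $\overline{D}_C(x)\geq(1-\sigma_{\C,\mu}(x))D_C(x)$, and the same event-conditioning argument for the local rate. The only difference is cosmetic, in part \ref{thm:main_adaptive_iii}: rather than ``letting $\eta\to0$ at the end'' (which is not literally possible since $n$ and $\mathcal{A}$ depend on $\eta$), the paper absorbs the $\liminf$ slack into the already-present $\varepsilon$ by choosing $n$ with $\inf_{h\ge n}(\essinf\beta(x_h))>(1-\varepsilon)(1+\varepsilon)^{-1}\beta_\infty$ --- precisely the ``placement of the $\varepsilon$-factor'' you allude to.
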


\begin{proof} 
\ref{thm:main_adaptive_00}-\ref{thm:main_adaptive_0}:
The second inequality in \ref{thm:main_adaptive_00} is immediate, taking into account that,
since $\sigma_{\C,\mu}(x_k) \leq 1$, $\tilde \sigma_{\C,\mu}(x_k)$ is decreasing in 
$\beta(x_k) \geq 1$.
In the following we prove the first inequality of \ref{thm:main_adaptive_00} and the inequality in 
\ref{thm:main_adaptive_0} at the same time.
Using Lemma~\ref{lm:aff_proj_1}, similarly to the proof of Theorem~\ref{thm3b}\ref{prop_randomaffine_ii}, 
one obtains
\begin{equation}
\label{eq:20201217a}
\EE[D_C(x_{k+1})\,\vert\, \mathfrak{X}_{k}] = D_C(x_{k}) - \EE[D_{C_{\xi_k}}(x_{k})\, \vert\,\mathfrak{X}_{k}].
\end{equation}
Now, using \eqref{eq:20200923a} and \eqref{eq:20200924a}, we have that, for every 
$\omega\in\Omega$, 
\begin{equation*}
\EE[D_{C_{\xi_k}}(x_{k})\,\vert\, \mathfrak{X}_{k}](\omega) 
= \int_{I}  D_{C_{i}}(x_{k}(\omega)) P_{\xi_k\vert \mathfrak{X}_{k}}(d i,\omega)
= \int_{I} D_{C_{i}}(x_{k}(\omega))  \nu(d i, x_{k}(\omega)).
\end{equation*}
If $x_{k}(\omega)\in C$, then, clearly, 
$\EE[D_{C_{\xi_k}}(x_{k})\,\vert\, \mathfrak{X}_{k}](\omega) = \overline{D}_C(x_{k}(\omega))=0$
and \ref{thm:main_adaptive_0} and the first of \ref{thm:main_adaptive_00} hold when evaluated at $\omega$.
Otherwise, we have that
\begin{align}
\nonumber\EE[D_{C_{\xi_k}}(x_{k})\,\vert\, \mathfrak{X}_{k}](\omega) &= \int_{I} 
\dfrac{[D_{C_{i}}(x_{k}(\omega))]^2}{\overline{D}_{C}(x_{k}(\omega))} \mu(d i)\\
\nonumber&=\overline{D}_{C}(x_{k}(\omega)) \int_{I} 
\left(\dfrac{D_{C_{i}}(x_{k}(\omega))}{\overline{D}_{C}(x_{k}(\omega))}\right)^2 \mu(d i)\\
\label{eq:20200924b}&=\overline{D}_{C}(x_{k}(\omega)) (V(x_{k}(\omega)) + 1),
\end{align}
where we used the fact that 
\begin{equation}
\VV\bigg[ \dfrac{D_{C_\xi}(x)}{\overline{D}_C(x)}\bigg] 
= \EE\bigg[ \bigg( \dfrac{D_{C_\xi}(x)}{\overline{D}_C(x)}\bigg)^2\bigg] -
\bigg( \EE\bigg[ \dfrac{D_{C_\xi}(x)}{\overline{D}_C(x)} \bigg] \bigg)^2 = 
\EE\bigg[ \bigg( \dfrac{D_{C_\xi}(x)}{\overline{D}_C(x)}\bigg)^2\bigg] - 1.
\end{equation}
Set, for the sake of brevity, $\beta_k = \beta(x_k(\omega))$.
Then, by the definition of $\beta$, \eqref{eq:20200924b} yields
 $\EE[D_{C_{\xi_k}}(x_{k})\,\vert\, \mathfrak{X}_{k}](\omega) 
 = \beta_k \overline{D}_{C}(x_{k})(\omega)$.
Thus, by \eqref{eq:20201217a}, we have $\EE[D_C(x_{k+1})\,\vert\, \mathfrak{X}_{k}](\omega) 
= D_C(x_{k}(\omega))- \beta_k\overline{D}_C(x_{k}(\omega)) 
= (1 - \beta_k)D_C(x_{k}(\omega)) + \beta_k
(D_C(x_{k}(\omega)) - \overline{D}_C(x_{k}(\omega)))
=  (1 - \beta_k)D_C(x_{k}(\omega))
+ \beta_k \EE[D_C(P_{C_\xi}(x_{k}))\,\vert\, \mathfrak{X}_{k}](\omega)
\leq [(1 - \beta_k) + \beta_k\sigma_{\C,\mu}(x_k(\omega))]D_C(x_{k}(\omega))$,
where we used the definition of $\sigma_{\C,\mu}$ in \eqref{eq:sigma_mu} and that $x_k(\omega) \in K(x_k(\omega))\setminus C$. So, the inequality in \ref{thm:main_adaptive_0} and the first of \ref{thm:main_adaptive_00} hold when evaluated at $\omega$.

\ref{thm:main_adaptive_i}:
It is sufficient to prove that $(D_C(x_k))_{k\in\N}$ is $\PP$-a.s.~decreasing
and that $P_C(x_k) = x_{\star}\ \PP$-a.s., for every $k \in \N$.
Indeed, once we have proved this, we can follow the same line of argument as in
the proof of items \ref{thm:main_random_i}-\ref{thm:main_random_iv} of
Theorem \ref{thm:main_random}. To that purpose, 
by Proposition \ref{prop:stochasticintersection2}\ref{prop:stochasticintersection2_ii} we have that there 
exists $\mu$-negligible set $J$ such that $C=\bigcap_{i\in I\!\setminus J}C_i$.
Let $k \in \N$, set
$N_k = \xi_k^{-1}(J)$, and let $\omega\in\Omega$. Then
$\PP(\xi_k^{-1}(J) \,\vert\, \mathfrak{X}_k) = P_{\xi_{k} \vert \mathfrak{X}_{k}}(J, x_k(\omega))$.
Hence, if $x_k(\omega) \in C$, then $\PP(\xi_k^{-1}(J) \,\vert\, \mathfrak{X}_k) = \mu(J) = 0$,
otherwise
\begin{equation*}
\PP( \{x \in C_{\xi_k}\} \,\vert\,  \mathfrak{X}_{k})(\omega) = 
\int_{J}   \dfrac{D_{C_{i}}(x_{k}(\omega))}
{\overline{D}_{C}(x_{k}(\omega))} \mu(d i) = 0.
\end{equation*}
Therefore in any case $\PP(N_k \,\vert\, \mathfrak{X}_k)=0$ and 
hence $\PP(N_k)= \EE[\PP(N_k \,\vert\, \mathfrak{X}_k)]=0$. 
Now, as in the proof of Proposition~\ref{prop:stochasticintersection2}\ref{prop:stochasticintersection2_ii},
we prove that $C \subset \bigcap_{\omega \in \Omega\setminus N_k} C_{\xi_k(\omega)}$.
Indeed, let $x \in C$. Then, for every $\omega \in \Omega\setminus N_k$, we have $\xi_k(\omega) \in I\setminus\! J$
and hence, since $x \in C = \bigcap_{i\in I\setminus J} C_i$, we have $x \in C_{\xi_k(\omega)}$.
Then, for every $\omega \in \Omega\!\setminus N_k$, 
$x_{k+1}(\omega) = \Pc_{C_{\xi_k(\omega)}}(x_{k}(\omega))$ and $C\subset C_{\xi_k(\omega)}$. Hence, using 
Lemma \ref{lm:aff_proj_1}, we have $P_C(x_{k+1}(\omega)) = P_C(x_k(\omega))$ and $D_C(x_{k+1}(\omega)) 
= D_C(x_{k}(\omega)) - \Bre(x_{k+1}(\omega),x_{k}(\omega))\leq D_C(x_{k}(\omega))$. This proves that 
$P_C(x_{k+1}) = P_C(x_k)$ $\PP$-a.s.~and
$D_C(x_{k+1}) \leq D_C(x_{k})$ $\PP$-a.s.

\ref{thm:main_adaptive_iii}:
Let $\varepsilon>0$ with $(1+\varepsilon)(1-\varepsilon)^{-1} \sigma_{\C\,\mu}(x_\star)<1$
and $\delta>0$ be as in Lemma~\ref{lm:sigma_mu}\ref{lm:sigma_mu_ib}. 
The proof follows that  of Theorem~\ref{thm:main_random}\ref{thm:main_random_iv}.
Set $\mathcal{A}_{k}:=\{ D_C(x_{k})\leq\delta \}$ and
let $n \in \N$ be sufficiently large so that $\mathcal{A}:=\mathcal{A}_n$ is such that
$\PP(\Omega\setminus\mathcal{A}) \leq \alpha$. Let $k \in \N$, with $k \geq n$.
Then $\mathcal{A} \subset \mathcal{A}_k$ and hence, 
 denoting by $\chi_{\mathcal{A}}$ the characteristic function of $\mathcal{A}$, 
we have
$\sigma_{\C,\mu}(x_k) \chi_{\mathcal{A}}  \leq (1 + \varepsilon)(1-\varepsilon)^{-1}\sigma_{\C,\mu}(x_\star)$.
Next, by  \ref{thm:main_adaptive_0} and the definition of $\tilde{\sigma}_{\C,\mu}(x_k)$, 
\begin{align*}
\EE[D_C(x_{k+1})\,\vert\,\mathfrak{X}_{k}] \chi_{\mathcal{A}} 
& \leq  \Big[1 -\beta(x_k) \Big( 1-  \frac{1+\varepsilon}{1-\varepsilon} \sigma_{\C,\mu}(x_\star) \Big) \Big] 
D_C(x_{k}) \chi_{\mathcal{A}}\\
& \leq  \Big[1 - \inf_{h \geq n}(\essinf \beta(x_h)) \Big( 1-  \frac{1+\varepsilon}{1-\varepsilon} \sigma_{\C,\mu}(x_\star) \Big) \Big] 
D_C(x_{k}) \chi_{\mathcal{A}}.
\end{align*}
Now let $n \in \N$ be sufficiently large so that 
$\inf_{h \geq n}(\essinf \beta(x_h)) > (1-\varepsilon)(1+\varepsilon)^{-1} \beta_\infty$. 
Then the statement follows after taking expectation.
\end{proof}

\begin{remark}
\normalfont
Theorem~\ref{thm:main_adaptive} shows the advantage, in terms of convergence rate,
of the adaptive random set control strategy \ref{eq:C3} against \ref{eq:C2}.
In particular, in the inequality in Theorem~\ref{thm:main_adaptive}\ref{thm:main_adaptive_iii}, if
$\beta_\infty >1$ one can choose $\varepsilon>0$ sufficiently small and get
a better rate than that of Theorem~\ref{thm:main_random}\ref{thm:main_random_iv}.
\end{remark}

\section{Applications}
\label{sec:Applications}

In this section we present two applications, where our analysis provides novel results and/or possibilities. 
We provide only general insights, leaving a deeper treatment of the subjects for future work. 
In Table~\ref{tab:funs} we provide several examples of Legendre functions satisfying the assumptions of our convergence theorems
that can be used in such applications.

\begin{table}[t]
\begin{center}\small
\caption{Legendre functions $\Leg(x)=\sum_{i=1}^{n}\varphi(x_i)$ with $\Leg^*$ twice differentiable and $\dom\Leg^*$ open.}
\label{tab:funs}
\vspace{1ex}
\begin{tabular} {|>{\centering}m{0.16\textwidth}|>{\centering}m{0.24\textwidth}|>{\centering}m{0.06\textwidth}|>{\centering}m{0.12\textwidth}|>{\centering\arraybackslash}m{0.21\textwidth}|}
\hline
Legendre function & $\varphi(t)$ & $\dom \varphi$ & $\dom \varphi^*$ & Bregman distance/ Application \\ \hline \hline
Burg entropy & $-\log t$ \phantom{\Big(} & $\R_{++}$ & $\R_{--}$  & Itakura-Saito divergence \cite{DPR2018}\\ \hline
Boltzmann-Shannon entropy & $t\log t-t$& $\R_+$ & $\R$  & Kullback-Leibler divergence \cite{BEN2015,PC2019} \\ \hline
Fermi-Dirac entropy & $t \log t + (1-t) \log(1-t)$ & $[0,1]$ & $\R$  & Logistic loss \cite{HL2005,DPR2018}\phantom{\Big(} \\ \hline
Hellinger entropy & $-\sqrt{1-t^2}$ & $[-1,1]$ & $\R$  & Hellinger distance \cite{DPR2018}\phantom{\Big(} \\ \hline
Positive power $0<\beta< 1$ & $\dfrac{t^{\beta} - \beta t + \beta -1}{\beta(\beta-1)}$ & 
$\R_+$ & $\left]-\infty, \frac{1}{1-\beta}\right[$ & 
$\beta$-divergence \cite{DPR2018,FI2011}\\ \hline
Tsallis entropy $0<q<1$ & $\frac{1}{q-1}(t^{q}-t)$ & $\R_{+}$ & $\left]-\infty, \frac{1}{1-q}\right[$  & Tsallis relative $q$-entropy \cite{MUZ2017} \\ \hline
$p$-norm $1<p\leq 2$ & \phantom{\Big(}$\tfrac{1}{p} \abs{t}^p$\phantom{\Big)} & $\R$ & $\R$ &   Compressed sensing \cite{AH2019}\\ \hline
\end{tabular}
\end{center}
\end{table}%

\subsection{Sketch \& Bregman project methods}
\label{sec:SketchProject}

In recent years, a new class of iterative solvers for linear systems, 
called \emph{sketch \& project} \cite{GR2015}, has emerged, taking its origin in the work by 
Strohmer and Vershynin \cite{SV2009}, which, in turn, generalized the Kaczmarz method \cite{KAC1937}. 
Below we describe the method. Given the problem
\begin{equation*}
x_\star = \argmin\tfrac{1}{2}\scalarp{B x,x}\quad\text{subject to}\quad A x = b, 
\end{equation*} 
where $A\in\R^{m\times n}$, $b\in\R^m$, and $B\in\R^{n\times n}$ is a symmetric positive definite matrix, 
one introduces a family of \emph{sketch} matrices $(S_i)_{i\in I}$ such that $S_i\in\R^{m\times n_i}$, $n_i\geq1$, $i\in I$, and then replaces the task of solving $A x=b$ 
by that of projecting $B$-orthogonally onto the solution set of the \emph{sketched} systems $S_i^*A x = S_i^*b$, $i\in I$,
according to different types of sketching control schemes (cyclic, greedy, random, adaptive) \cite{GMMN2020}.
This method indeed leads to the computation of the minimal $B$-norm solution of $A x=b$ under the assumption that $x_0=0$ and that the sketches are chosen in a way that the whole solution space is covered 
by the sketching process. Denoting the solution sets by
\begin{equation*}
C = \{x \in \R^n \,\vert\, A x = b\}\,\text{ and } C_i = \{x \in \R^{n} \,\vert\, S_i^*A x = S_i^*b\}\,(i\in I),
\end{equation*} 
the last assumption means that $C=\bigcap_{i\in I}C_i$, for a deterministic sketching strategy, or
$C=\{x\in\R^n\,\vert\,x\in C_{\xi}\ \PP\text{a.s.}\}$,
when the sketches are chosen randomly. This last condition
was referred to as the \emph{exactness assumption}, and can be equivalently expressed as $\range(A)\cap \Ker(\EE[S_\xi(S_\xi^* AA^*S_\xi)^\dagger S_\xi^*])=\{0\}$ \cite[Theorem 3.5]{RT2020}.
Global linear convergence rates were obtained in \cite{GMMN2020,SV2009,RT2020}, which extend those related to the Kaczmarz-type methods.

A direct application of our results is that the existing sketch \& project methodology can be successfully extended to Bregman projections, leading to the novel archetypal method of \emph{sketch \& Bregman project}.  In such a way, instead of finding the minimal norm solution of the linear system as in the classical setting, we compute the solution that minimizes the Legendre function $\Leg$ over the solutions of $A x = b$, provided that
$\nabla\Leg(x_0)=0$. In Table \ref{tab:funs} we give some typical separable Legendre functions that can be used within our framework together with references. 

The most studied family of sketches is the one associated to the standard basis of $\R^m$, e.g., 
$S_i = e_i$, which produces the popular Kaczmarz (or randomized Kaczmarz) method. In our setting this leads to \emph{Bregman-Kaczmaz methods} (greedy, random, adaptive random). In this specific case, 
 the expression of the rates $\sigma_{\C}$ and $\sigma_{\C, \mu}$ can be simplified. 
 For instance, let  $\mu=1_m/m$ be the probability vector of the uniform distribution on $I=\{1,\ldots,m\}$. 
 Then the constants in \eqref{eq:sigma_I} and \eqref{eq:sigma_mu} become
\begin{equation}\label{sbp_sigmas}
\sigma_{\C}(x_\star) = 1-\min_{v\in\range(\bar A^*)}\frac{\norm{\bar A v}_{\infty}}{\norm{v}_2} \;\text{ and }\;\sigma_{\C,\mu}(x_\star) = 1-\tfrac{1}{m}\lambda_{\min}^{+} (\bar A^*\bar A),
\end{equation}
respectively, where, setting
$D = \Diag(\lVert [\nabla^2\Leg(x_\star)]^{-1/2}A_{1:}\rVert_2,\dots,\lVert [\nabla^2\Leg(x_\star)]^{-1/2}A_{m:}\rVert_2)$, we have $\bar A =D^{-1}A$. 
These results, when specialized to $\Leg=(1/2)\norm{\cdot}^2_2$,  recover the ones of the Kaczmarz method \cite{KAC1937,SV2009}.

We finish the section by addressing a comparison with \cite{GMMN2020}. 
When 
$\Leg =(1/2) \norm{\cdot}_{B}^2$ (so that $\Bre(x,y)= (1/2)\norm{x-y}_{B}^2$), 
 Fact \ref{prop:bregman_div}\ref{prop:bregman_div_vii} holds for an arbitrarily large $\delta$ and hence, 
the local linear rates given in
 Theorems \ref{thm:main2}\ref{thm:main2_iii}, \ref{thm:main_random}\ref{thm:main_random_iii} and \ref{thm:main_adaptive}\ref{thm:main_adaptive_iii}, are indeed global and match the ones in \cite{GMMN2020}.
 Moreover we proved $\PP$-a.s.~convergence of the iterates, which is, up to our best knowledge, a new result even in the classical setting of stochastic sketch \& project methods.

\subsection{Multimarginal regularized optimal transport}
\label{sec:ROT}

The role of the Bregman projection method in optimal transport (OT) is well-known and deeply studied.
In this section we describe the more general setting of multimarginal OT.
In particular, we consider the discrete multimarginal regularized optimal transport problem as described in \cite{BEN2015}.
Let $n_1, \dots, n_m\in\N\!\setminus\!\{0,1\}$.
Let, for every $i=1, \dots, m$, $\rho_i\in\Delta_{n_i}$, where $\Delta_\ell = \{x\in\R_+^\ell \,\vert\, \norm{x}_1 = 1\}$ is the unit simplex of $\R^\ell$. Set $X = \R^{n_1\times \cdots \times n_m}$ and
 define, for every $i=1, \dots, m$, the push-forward (linear) operator $A_i \colon X \to \R^{n_i}$
 \begin{equation}
 A_i \pi = \Big( \sum_{h_1=1}^{n_1} \dots \sum_{h_{i-1}=1}^{n_{i-1}} \sum_{h_{i+1}=1}^{n_{i+1}} \cdots \sum_{h_{m}=1}^{n_{m}} \pi_{h_1, \dots, h_{i-1}, h, h_{i+1}, \cdots, h_{m}}\Big)_{1 \leq h \leq n_i}. 
 \end{equation}
The objective of multimarginal (entropic) regularized OT can be formulated as
 \begin{equation}
 \label{RMOTproblem}
 \pi_\star=\argmin_{\pi\in C_{1} \cap\dots\cap C_{m}} \mathrm{KL}(\pi,\kappa),
 \quad C_i = \{\pi \in X \,\vert\, A_i \pi = \rho_i\},\ i=1,\dots, m,
 \end{equation}
where 
$\kappa = e^{- \alpha/\eta} \in X$ is the \emph{Gibbs kernel}, $\alpha \in X$ the \emph{cost multidimensional matrix},
and $\mathrm{KL}(\pi,\kappa)$ is the Kullback-Leibler divergence, defined as
\begin{equation}
\mathrm{KL}(\pi,\kappa) = \sum_{h_1, \dots, h_m} \pi_{h_1, \dots, h_m} \log\Big(\frac{\pi_{h_1, \dots, h_m}}{\kappa_{h_1, \dots, h_m}} \Big) - \pi_{h_1, \dots, h_m} + \kappa_{h_1, \dots, h_m},
\end{equation}
which is the Bregman distance associated to the Boltzmann-Shannon entropy 
\begin{equation*}
\phi(\pi) = \sum_{h_1}^{n_1}\dots\sum_{h_m}^{n_m} \pi_{h_1,\dots, h_m}( \log \pi_{h_1,\dots,h_m} -1).
\end{equation*}
See Table~\ref{tab:funs}.

Since the KL projection onto the affine sets $C_i$'s can be computed explicitly and 
efficiently \cite[Proposition~4]{BEN2015}, our results show that 
the Bregman projection algorithm with greedy or random set control
sequence applied to problem \eqref{RMOTproblem}, and initialized with $\pi_0= \kappa$, 
converges Q-linearly, in the KL distance, to $\pi_\star$.
In this setting, when $m=2$, the Bregman projection method with greedy set control scheme
reduces to the Sinkhorn algorithm \cite{PC2019} (see Remark~\ref{rmk:greedy-alternate}),
for which both global and local linear rates of convergence are known \cite{KNI2008,PC2019}. However,
for $m>2$ the above results are new. Finally, note that the linear operator
\begin{equation}
A\colon X \to \R^{n_1}\times\cdots\times\R^{n_m},\quad
A \pi = (A_i \pi)_{1 \leq i \leq m},
\end{equation}
can be identified with a (possibly huge) matrix of dimensions $(n_1+\cdots + n_m)\times (n_1\cdots n_m)$.
So, we could even consider to perform Bregman projections onto each single row of the linear system
\begin{equation}
\label{eq:OTsys}
A \pi = 
\begin{bmatrix}
\rho_1\\
\vdots\\
\rho_m
\end{bmatrix}.
\end{equation}
This approach, if implemented with a greedy set control sequence, leads
to the Greenkhorn algorithm which was studied in \cite{AWR2017} only for the standard bimarginal OT problem.
We stress that, to the best of our knowledge, the Q-linear rate of convergence is new for this 
algorithm too (even when $m=2$).

\section{Conclusion and future work}

We studied the Bregman projection method with greedy and random set control sequences in deterministic 
and stochastic convex feasibility problems. We focused in particular on affine feasibility problems showing 
the Q-linear convergence of the method. In our future work on this subject we will devote special attention 
to applications  in regularized (multimarginal) OT, which here we just touched. Among other issues, we will 
address the explicit computation of the local Q-linear rate in terms of the problem data, the computation of 
the rate for Wasserstein barycenters formulated as a multimarginal OT, and the effects of different sketching 
control schemes on the convergence rate.

\end{document}